\documentclass{article}
\usepackage{amsmath}      
\usepackage{amssymb}      
\usepackage{amsthm}       
\usepackage{mathtools}    
\usepackage{bm}           
\usepackage{graphicx}     
\usepackage{hyperref}     
\usepackage{geometry}
\usepackage{xcolor}
\usepackage{subcaption}
\usepackage{float}
\usepackage{comment}
\usepackage{booktabs}
\usepackage{tikz}
\usetikzlibrary{positioning}
\newtheorem{theo}{Theorem}
\newtheorem{prop}{Proposition}
\newtheorem{defn}{Definition}
\newtheorem{corollary}{Corollary}

\newtheorem{remark}{Remark}
\newtheorem{lemma}{Lemma}

\newcommand{\R}{\mathbb{R}}
\newcommand{\1}{\mathbf{1}}
\newcommand{\E}{\mathbb{E}}
\newcommand{\En}{\mathcal{E}}

\renewcommand{\L}{\mathcal{L}}

\newcommand{\diag}{\text{diag}}

\newcommand{\argmax}{\text{argmax}}

\usepackage{biblatex}
\title{Computing Stationary Distribution via Dirichlet-Energy Minimization by Coordinate Descent}

\date{July 2025}

\usepackage{authblk}

\author[1]{Konstantin Avrachenkov}
\author[2]{Lorenzo Gregoris}
\author[2]{Nelly Litvak}

\affil[1]{Centre Inria d’Universit\'e C\^ote d'Azur, NEO team, Sophia Antipolis, France. 
\texttt{k.avrachenkov@inria.fr}}
\affil[2]{Department of Mathematics and Computer Science, Eindhoven University of Technology, Eindhoven, The Netherlands. 
\texttt{\{l.gregoris,n.v.litvak\}@tue.nl}}

\begin{document}

\maketitle

\begin{abstract}
    We present an optimization-based formulation of the Red Light Green Light (RLGL) \cite{RLGL} algorithm for computing stationary distributions of large Markov chains. This perspective clarifies the algorithm’s behavior, establishes exponential convergence for a class of chains, and suggests practical scheduling strategies to accelerate convergence. 
\end{abstract}

\tableofcontents

\newpage

\section{Introduction}

Computing the stationary distribution of a Markov chain is a fundamental task with applications 
ranging from queueing systems, performance evaluation, and chemical networks, to large-scale problems such as PageRank \cite{brin1998anatomy}, semi-supervised learning \cite{avrachenkov2012generalized}, and
graph neural networks \cite{gasteiger2018predict, bojchevski2020scaling}.

Formally, the task is to solve the eigenvector problem  $\pi P=\pi$ for a transition probability matrix $P$. In many of these real world settings, a Markov chain may have billions of possible states, making direct numerical methods infeasible and leaving iterative algorithms as the only practical option.

Iterative algorithms start with a guess solution $\hat\pi_0$, and generate recursively a sequence $\{\hat\pi_t\}_{t\geq0}$, at each step updating the coordinates of $\hat\pi_t$, with the goal of eventually converging to the solution $\pi$. 
A large class of such iterative algorithms update their estimate using the residual
\begin{equation}
    r_t = \hat\pi_t(P - I).
    \label{eq:residual}
\end{equation}
Usually, the stopping condition involves some norm of the residual, i.e. we stop when $\Vert r_t\Vert \leq \varepsilon$ for some $\varepsilon > 0$.

Examples include OPIC \cite{OPIC}, Gauss--Southwell-type PageRank algorithms \cite{mcsherry_uniform_2005}, and more generally a variety of residual-elimination methods \cite{jeh2003scaling}. 

The RLGL (“Red Light Green Light”) algorithm \cite{RLGL} is a recent unifying framework for many iterative approaches. In RLGL, only a subset of coordinates is updated at each iteration according to a chosen schedule (following the terminology in \cite{RLGL}, the updated coordinates receive a “green light", thus the name of the algorithm). Different schedules and initializations correspond to many of the classical algorithms cited above. RLGL is remarkably simple and performs extremely well in practice, but proving convergence guarantees for the best performing schedules has been challenging. Nonetheless, in the experiments, RLGL consistently outperforms many known algorithms, including state of the art Krylov subspace methods such as GMRES \cite{GMRES}.
 
Another classical approach for stationary distribution computation is reformulating the eigenvector problem as a minimization problem of the least-squares objective 
\begin{equation}\label{eq:least_squares}
    f(x) = \frac12 \|x(P -I)\|^2,
\end{equation}
for some norm $\|\cdot\|$. Then $f$ is a convex function minimized at the solutions of the linear system, and we can apply the well-established theory for this class of optimization problems.
The simplest class of iterative optimization algorithms are first-order methods \cite{wright2015coordinatedescentalgorithms}, which use gradient information to update the current estimate. 

There are drawbacks when applying first-order methods to minimize  the least-squares objective function \eqref{eq:least_squares}. For instance, the gradient reads
\begin{equation}
    \nabla f(x) = x(P-I)(P-I)^\top,
\end{equation}
which depends not only on $P$, but also depends on $P P^\top$ and $P^\top$. This could be a problem because even if $P$ is sparse, $PP^\top $ may not be (an effect known as \emph{fill-in} \cite[Section~3.3.2]{saad2003iterative}. A way to address this is to avoid forming $PP^\top$ explicitly and instead compute $(xP)P^\top$ through two sparse matrix–vector multiplications. While this only doubles the cost per iteration, the condition number of $PP^\top$ is the square of that of $P$, which can slow down convergence. Finally, in applications like PageRank where accessing out-neighbors is cheaper than in-neighbors, approaches whose gradients (or updates) depend only on $P$ and not on $P^\top$, are operationally attractive.

Another formulation as an optimization problem applies to the special case when the matrix $A$ of a linear system $Ax=b$ is symmetric positive semi-definite. Under this assumption, one can construct a convex objective function, of which the gradient depends linearly on $A$:
\begin{equation}
    \Phi(x) = \frac12 x^\top A x - b^\top x.
\label{eq:Phi}
\end{equation}
This is usually referred to as the \emph{energy function} of the linear system. This formulation is the starting point of the conjugate-gradient method and other techniques for solving positive semi-definite linear systems \cite{CG}. The power of this approach is in the fact that the gradient of the energy function is
\begin{equation}\label{eq:linear_updates}
    \nabla \Phi(x_t) = Ax_t - b = -r_t.
\end{equation}
Relation \eqref{eq:linear_updates} bridges the optimization algorithms based on the gradient, and the iterative algorithms based on the residual. 

The limitation of this approach for stationary distribution computations is that, in general, transition matrices are not symmetric (in this trivial case the stationary distribution is uniform). Therefore, the conjugate-gradient framework is not directly applicable. The RLGL algorithm however, uses linear updates in the transition matrix $P$, suggesting there may exist an objective function of the form \eqref{eq:Phi}, for which a coordinate descent method produces similar updates and converges exponentially fast.

\paragraph{Contributions.}
In this paper, we provide partial theoretical explanation for the strong empirical performance of RLGL. Our main results are:

\begin{enumerate}
    \item \textbf{Variational formulation.}  
    We show that when $P$ is similar to a symmetric matrix---for example, for reversible Markov chains---the stationary distribution problem admits an energy formulation based on the \emph{Dirichlet energy}. In this setting, RLGL corresponds to a block coordinate descent method that minimizes this energy.

    \item \textbf{Exponential convergence for nearly reversible chains.}  
    Using this energy viewpoint, we prove that RLGL converges \emph{exponentially fast} for a class of chains we call \emph{nearly reversible}, with minimal assumption on the update schedule. This extends the previously known special cases in \cite{RLGL}.

    \item \textbf{Energy inspired heuristics.}  
    The variational formulation yields new coordinate selection rules that provably maximizes the decrease in energy at each step, which we call \emph{Gauss-Southwell-Dirichlet} heuristics. We show that these rules outperform the previously known heuristics.

\end{enumerate}

\paragraph{Outline of the paper.}  
In Section~\ref{sec:preliminaries} we start by introducing the notation, and give brief preliminaries on Markov chains, the RLGL algorithm, the Dirichlet energy of a Markov Chain, and coordinate descent methods to minimize convex functions. In Section~\ref{sec:cd_rev} we prove the equivalence of RLGL to coordinate descent in the case when the transition matrix is similar to a symmetric matrix. In Section~\ref{sec:nearly_rev} we extend these ideas by viewing a general irreversible Markov chain as a perturbed reversible chain. By doing so, we obtain spectral conditions that, when satisfied, guarantee exponential convergence. Finally, in Section~\ref{sec:numerical}, we propose new energy-inspired heuristics, and we compare them with baseline heuristics for computation of the stationary distribution and the PageRank computation on both real and synthetic networks.

\section{Preliminaries}
\label{sec:preliminaries}

This section recollects the main tools used in the paper, linking the probabilistic and optimization viewpoints. 
We first fix notation and recall basic facts on Markov chains, stationary distribution and reversibility. 
We then introduce the RLGL algorithm, a block-coordinate analogue of power iteration. 
Finally, we discuss the graph Laplacian and the Dirichlet energy, which provide the variational framework to interpret RLGL as a coordinate descent method.

\subsection{Notation and conventions}
We adopt the row vector convention for Markov chains: probability distributions are represented as row vectors $\pi \in \mathbb{R}^n$ satisfying $\pi \mathbf{1}^\top = 1$, where $\mathbf{1} \in \mathbb{R}^n$ denotes the all-ones row vector. A matrix $P \in \mathbb{R}^{n \times n}$ is called \emph{row stochastic} if $P \mathbf{1}^\top = \mathbf{1}^\top$ and $P_{ij} \geq 0$ for all $i,j$. 

For a positive integer $n$, we write $[n] := \{1,2,\dots,n\}$.  
We denote by $\{e_i\}_{i=1}^n$ the standard basis of $\mathbb{R}^n$, where $e_i$ is the row vector with a $1$ in position $i$ and $0$ elsewhere.  
Given a subset $B \subseteq [n]$, we let $I_B \in \mathbb{R}^{n \times n}$ be the diagonal \emph{projector} onto the coordinates in $B$, defined by
$$
  (I_B)_{ij} = \delta_{ij}\,{\mathbb I}(i \in B),
$$
where $\delta_{ij}$ is the Kronecker delta and ${\mathbb I}(\cdot)$ denotes the indicator function.  
For a matrix $M \in \mathbb{R}^{n \times n}$ and a subset $B \subseteq [n]$, we denote by $M_{BB}$ the \emph{principal submatrix} of $M$ with rows and columns indexed by $B$. Equivalently, $M_{BB} = (M_{ij})_{i,j \in B} \in \mathbb{R}^{|B|\times|B|}$.

Time or iteration indices are denoted by $t \in \mathbb{N} = \{0,1,2,\dots\}$, starting at $t=0$.  
For a differentiable function $f: \R^n\to \R$, we denote by $\nabla f \in \R^n$ its gradient, and by $\partial_i f := (\nabla f)_i$.

\subsection{Markov chains and stationary distribution}
A discrete-time Markov chain on the state space $V = [n]$ is specified by an $n \times n$ row-stochastic matrix $P$, where $P_{ij}$ denotes the probability of transitioning from state $i$ to state $j$. We always assume that the chain is irreducible, so that by the Perron--Frobenius theorem there exists a unique stationary distribution vector $\pi$, which is the left eigenvector of $P$ corresponding to eigenvalue $1$, i.e.,
$$
\pi = \pi P, 
\qquad 
\sum_{i=1}^n \pi_i = 1,
\qquad 
\pi_i > 0 \text{ for all } i \in V.
$$

A Markov chain is called \emph{reversible} with respect to $\pi$ if it satisfies the \emph{detailed balance equations}
\begin{equation}
    \pi_i P_{ij} = \pi_j P_{ji}
    \qquad \text{for all } i,j \in V.
\label{eq:DB}
\end{equation}

Given a Markov chain with stationary distribution $\pi$, we define its \emph{time-reversal} as the chain with transition matrix
$$
P^* = \Pi^{-1} P^\top \Pi,
$$
where $\Pi = \mathrm{diag}(\pi)$. Both $P$ and $P^*$ share the same spectrum and stationary distribution, and the chain is reversible if and only if $P = P^*$. Equivalently, $P$ is reversible if and only if it is self-adjoint in the Hilbert space $L^2(\pi)$, equipped with the rescaled inner product $\langle \cdot, \cdot \rangle_\pi$, defined for $f,g : [n]\to\R$ as
$$
\langle f, g \rangle_\pi := \sum_{x=1}^n \pi_xf(x)g(x).
$$

For our purposes, it is more convenient to work with the standard Euclidean inner product. This can be achieved by applying the isometry $x\mapsto x\Pi^{-1/2}$. In these new coordinates, which we denote by $y$, a reversible transition matrix $P$ with stationary distribution $\pi$ becomes symmetric.

It is often convenient to separate a transition matrix into its symmetric and antisymmetric components with respect to $\pi$. Indeed, any irreducible $P$ admits the decomposition
\begin{equation}
    P = \tfrac{1}{2}(P + P^*) \;+\; \tfrac{1}{2}(P - P^*),
\label{eq:rev_dec}
\end{equation}
where the first term is the \emph{reversible part} of $P$, itself a stochastic matrix, and the second term is a matrix with zero row sums that captures the deviation from reversibility.

\subsection{RLGL algorithm}

The Red-Light-Green-Light (RLGL) algorithm \cite{RLGL} works as follows.  Given an iterate $x_t$ and residue vector $r_t = x_t(P-I)$, the update is
\begin{equation}
    x_{t+1} = x_t - x_t(I-P)I_{B_t} = x_t + r_tI_{B_t}.
\label{eq:rlgl_update}
\end{equation}
Instead of recomputing the residue at every iteration, one can update it recursively as
\begin{align*}
r_{t+1} &= x_{t+1}(P-I) \\
&= (x_t - x_t(I-P)I_{B_t})(P-I) \\
&= r_t + r_tI_{B_t}(P-I)=r_tP(B_t),
\end{align*}
where $P(B_t)=I+I_{B_t}(P-I)$ is the matrix such that its rows $i \in B_t$ are the same as in $P$ and the rows $i\notin B_t$ are as in the identity matrix. Thus, RLGL 
can be viewed as block-coordinate variant of the power iterations. In other words, instead of updating all coordinates simultaneously as in Power iteration, RLGL in \eqref{eq:rlgl_update} updates only a subset $B_t \subseteq [n]$; when $B_t =[n]$ for all $t$, the method reduces to the classical power iteration. 

The total residual is conserved and equals to zero:
$$
r_{t}\mathbf{1}^\top=x_{t}(P-I)\mathbf{1}^\top
= x_{t}(P\mathbf{1}^\top - \mathbf{1}^\top)
= 0.
$$
The terminology in \cite{OPIC}, \cite{RLGL} emphasizes this by alluding to the residual as \emph{cash}, and interpreting the updates as cash movements or transactions. 

At each step, the residual may decrease due to cancellations, when cash of opposite sign meets at a node. Consequently, the $\ell_1$ norm of the residual is non-increasing:
$$
\|r_{t+1}\|_1 \le \|r_t\|_1, 
$$

The approximation of the stationary distribution is obtained by normalizing vector $x_t$, which in turn is expressed as the sum of updates at each step, as seen from \eqref{eq:rlgl_update}:
\[
x_t = x_0 + \sum_{k=1}^{t-1} r_k I_{B_k}, \quad \hat\pi_t=\frac{1}{\|x_{t}\|_1}\, x_t.  
\]
In other words, $x_t$ records the cumulative \emph{transaction history}, that is the total amount of  cash that each node has distributed up to time $t$. When the residue $r_t$ goes to zero, $\hat \pi_t$ converges to the stationary distribution. 

Thus, the crux of RLGL updates is to drain the cash using as small as possible number of transactions.  However, identifying an optimal sequence of transactions is a difficult optimization problem, intractable even for relatively small graphs. 
Consequently, practical implementations rely on heuristics, both randomized and deterministic.
Several of these heuristics are summarized in Table~\ref{tab:heuristics}.
\begin{table}[H]
\centering
\begin{tabular}{@{}ll@{}}
\toprule
\textbf{Heuristic} & \textbf{Selection Rule} \\ \midrule
\texttt{RR}    & \(B_t = \{ i \mid i \equiv t \, (\mathrm{mod}\, n) \}\) \\[1ex]
\texttt{Rand}  & \(B_t = \{ i \}\) with probability \(\frac{1}{n}\) \\[1ex]
\texttt{Greedy} & \(B_t = \arg\min_{i} \|r_t + r_t I_{\{i\}}(P-I)\|_1\) \\[1ex]
\texttt{MaxC}  & \(B_t = \{ i \mid i = \text{argmax}_{j} |r_{t,j}| \}\) \\[1ex]
\texttt{PC}    & \(B_t = \{ i \}\) with probability proportional to \(|r_{t,i}|\) \\[1ex]
\texttt{Theta} & 
\(
B_t = \left\{ i \mid i \equiv t \, (\mathrm{mod}\, n) \text{ and } |r_{t,i}| \ge \theta_t(r) \right\}, \quad \theta_t(r) = \left(\frac{1}{n}\sum_j |r_{t',j}|^r\right)^{1/r}
\) \\ \bottomrule
\end{tabular}
\caption{Summary of RLGL heuristics considered in \cite{RLGL}.}
\label{tab:baseline_heuristics}
\end{table}

Numerical evidence suggests that residue-dependent heuristics proposed in \cite{RLGL}, such as \texttt{Theta}, achieve state-of-the-art performance, often outperforming classical methods in practice.

\subsection{Dirichlet energy and the graph Laplacian}

The normalized Laplacian associated with an irreducible transition matrix $P$ is defined as
$$
\mathcal{L_{{\rm norm}}} \;=\; I - \Pi^{1/2}P\Pi^{-1/2}, \qquad \Pi = \mathrm{diag}(\pi),
$$
where $\pi$ is the stationary distribution of $P$. If $\lambda_1 = 1, \lambda_2, \dots, \lambda_n$ are the eigenvalues of $P$ then $1 - \lambda_i$ are the eigenvalues of $\mathcal{L_{{\rm norm}}}$. In particular, the kernel of $\mathcal{L_{{\rm norm}}}$ is one-dimensional, spanned by $\sqrt{\pi}$. 
Since $\mathcal{L_{{\rm norm}}}$ is in general non-symmetric, its eigenvalues may be complex numbers. A standard way to handle this is to consider the \emph{additive symmetrization}, or \emph{symmetrized Laplacian}:
$$
\mathcal{L}_{\mathrm{sym}} \;=\; \frac{1}{2} (\mathcal{L_{{\rm norm}}} + \mathcal{L_{{\rm norm}}}^\top)
\;=\; I - \Pi^{1/2} \Big(\frac{P+P^*}{2}\Big) \Pi^{-1/2}.
$$
The symmetrized Laplacian is symmetric and positive semi-definite, so its eigenvalues, denoted by $\mu_1,\dots,\mu_n$, are real and contained in $[0,2]$. Its kernel coincides with that of $\mathcal{L_{{\rm norm}}}$, spanned by $\sqrt{\pi}$.

The \textit{Dirichlet energy} associated with $P$ is the quadratic form induced by the symmetrized Laplacian:
\begin{equation}
    \En(y) = \frac12 y\mathcal{L}_{\mathrm{sym}}y^\top.
\label{eq:dir_energy}
\end{equation}
This function is convex, with the only flat direction being the one spanned by the kernel of the Laplacian. More precisely, restricted to the orthogonal complement of $\sqrt{\pi}$, the Dirichlet energy is $\mu_2$-strongly convex, where $\mu_2$ is the smallest non-zero eigenvalue of $\mathcal{L}_{\mathrm{sym}}$. The constant $\mu_2$ is often called the \textit{Poincaré constant} of $P$ \cite{salez2025modern}. In the reversible and lazy case, one has $\mu_2 = 1 - \lambda_2(P)$, so the Poincaré constant coincides with the usual spectral gap. 

For brevity, we will henceforth write $\L$ for the symmetrized Laplacian and denote by $\mu$ its smallest nonzero eigenvalue. 

The Dirichlet energy plays a central role in the analysis of mixing times: it controls the decay of variance and underlies functional inequalities (such as Poincaré and log-Sobolev inequalities) that quantify the speed of convergence to equilibrium \cite{levin2017markov}. The definition of the  Dirichlet energy in this paper differs slightly from the classical one, as we work in Euclidean coordinates rather than in the weighted space $\ell^2(\pi)$, and use the transpose of the symmetrized Laplacian; however, the essential properties remain unchanged.

The next proposition establishes an important relation between the squared $\ell_2$  norm of the the residual and the Dirichlet energy.
\begin{prop}
\label{prop:residual-energy}
    There exist constants $0 < c_1 \leq c_2 < +\infty$ such that
    $$
    c_1 \En(x) \leq \Vert x\Pi^{1/2}(P-I)\Vert_2^2 \leq c_2 \En(x).
    $$    
\end{prop}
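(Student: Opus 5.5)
We will compare two positive semidefinite quadratic forms that share the same kernel, $\operatorname{span}(\sqrt{\pi})$; both-sided constants then follow by a compactness or eigenvalue argument. First we write the residual term as a quadratic form in $x$. Note that $\Pi^{1/2}(P-I) = -\Pi^{1/2}(I-P)\Pi^{-1/2}\,\Pi^{1/2} = -\mathcal{L}_{\rm norm}\Pi^{1/2}$. Hence
\[
\Vert x\Pi^{1/2}(P-I)\Vert_2^2 = x\,\mathcal{L}_{\rm norm}\Pi\,\mathcal{L}_{\rm norm}^\top x^\top =: x M x^\top ,
\]
with $M$ symmetric positive semidefinite.

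Next we identify the kernels. For $\mathcal{E}(x)=\tfrac12 x\L x^\top$ we have $\ker \L = \operatorname{span}(\sqrt{\pi})$, as stated above. For $M$, we have $xMx^\top=0$ iff $x\mathcal{L}_{\rm norm}\Pi^{1/2}=0$. Since $\Pi^{1/2}$ is invertible, this is equivalent to $x\mathcal{L}_{\rm norm}=0$. Thus $x$ is a left null vector of $\mathcal{L}_{\rm norm}$, i.e. $x\Pi^{1/2}P\Pi^{-1/2}=x$, i.e. $(x\Pi^{1/2})P = x\Pi^{1/2}$. By irreducibility and Perron--Frobenius, $x\Pi^{1/2}\propto \pi$, so $x\propto \pi\Pi^{-1/2}=\sqrt{\pi}$. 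Therefore $\ker M=\ker\L=\operatorname{span}(\sqrt{\pi})$. Here one must keep track of left versus right kernels, since $\mathcal{L}_{\rm norm}$ is not symmetric. The left kernel is the relevant one, and it is still spanned by $\sqrt{\pi}$; a short check also confirms $\mathcal{L}_{\rm norm}\sqrt{\pi}^\top = 0$, via $P\1^\top=\1^\top$.

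Finally, decompose $x = a\sqrt{\pi} + z$ with $z\perp\sqrt{\pi}$. Both forms vanish on the $\sqrt{\pi}$ component, and since $\sqrt{\pi}$ lies in the kernel of each symmetric matrix, the cross terms vanish as well. So both forms depend only on $z$. On the subspace $W=\sqrt{\pi}^{\perp}$ both forms are positive definite. Hence $\mu\Vert z\Vert^2 \le 2\mathcal{E}(z) \le \mu_{\max}\Vert z\Vert^2$, and $m\Vert z\Vert^2\le zMz^\top \le m_{\max}\Vert z\Vert^2$, where $m$ and $m_{\max}$ are the smallest and largest eigenvalues of $M$ restricted to $W$, with $m>0$ by the kernel identification. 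Combining these gives the claim with $c_1 = 2m/\mu_{\max}$ and $c_2 = 2m_{\max}/\mu$. The main obstacle is the kernel identification in the second step. Everything else is the standard equivalence of norms on a finite-dimensional quotient, possibly complemented by crude explicit bounds such as $m_{\max}\le \Vert \mathcal{L}_{\rm norm}\Vert_2^2\max_i\pi_i$ if explicit constants are desired.
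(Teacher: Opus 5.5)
Your proof is correct and follows the paper's route: both sides are positive semidefinite quadratic forms in $x$ (your $M=\mathcal{L}_{\rm norm}\Pi\mathcal{L}_{\rm norm}^\top$ is exactly the paper's $\Pi^{1/2}(P-I)(P-I)^\top\Pi^{1/2}$) with the common kernel $\operatorname{span}(\sqrt{\pi})$, and the paper then applies its appendix lemma on equivalence of such forms. You additionally spell out the kernel identification via Perron--Frobenius, which the paper leaves implicit, and you replace the lemma's compactness argument by explicit eigenvalue constants $c_1=2m/\mu_{\max}$, $c_2=2m_{\max}/\mu$; both are harmless refinements.
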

\begin{proof}
    Apply Lemma~\ref{lemma:quad_bound} in the Appendix to the Dirichlet energy and $x\Pi^{1/2}(P-I)(P-I)^\top\Pi^{1/2}x^\top$. 
\end{proof}

In particular, Proposition~\ref{prop:residual-energy} implies that if either the energy or the residual goes to zero, then both these quantities go to zero together with the same rate. 

\subsection{Optimization with coordinate descent}
We begin by recalling standard results on coordinate descent methods for minimizing a differentiable function $f: \mathbb{R}^n \to \mathbb{R}$. A coordinate descent method generates a sequence of iterates via updates of the form:
\begin{equation}
    x_{t+1} = x_t - \alpha_t \partial_{i_t}f(x_t) e_{i_t},
\label{eq:coord_desc}
\end{equation}
where $i_t$ is the chosen coordinate at step $t$ and $\alpha_t\in \R$ is the step-size. 
It is known that coordinate descent methods converge exponentially, if $f$ is a so-called $\mu$-PL function (see Definition~\ref{def:pl_ineq} below) and the gradient of $f$ is Lipschitz continuous.

\begin{defn}[$\mu$-PL function, \cite{wright2015coordinatedescentalgorithms}]
Let $f$ be a differentiable function, and let $f^* = \inf_{x\in \mathbb{R}^n} f(x)$. We say $f$ is $\mu$-PL if it satisfies
$$
f(x) - f^* \leq \frac{1}{2\mu} \Vert \nabla f(x)\Vert_2^2, \qquad \forall x \in \R^n.
$$
\label{def:pl_ineq}
\end{defn}

The Polyak-Lojasiewicz inequality lower bounds the $\ell_ 2$ norm of the gradient with the function's suboptimality. 
In particular, any $\mu$-strongly convex function \eqref{eq:convexity} is $\mu$-PL; this is formally stated in  Lemma~\ref{lem:stronly-convex} in the Appendix. 

Although the Dirichlet energy \eqref{eq:dir_energy} is not strongly convex, due to the flat direction corresponding to stationary distribution, it still satisfies PL inequality with $\mu$ equal to the Poincaré constant, see Lemma~\ref{lem:PSD-PL} in the Appendix.

Let $L_{max} := \max_i L_i$, where $L_i$'s are the Lipschitz constants: 
\begin{equation}
    \vert \partial_i f(x + h e_i) - \partial_i f(x) \vert \leq L_i |h|, \qquad i=1,\dots,n.
\label{eq:lip}
\end{equation}
Functions that satisfy \eqref{eq:lip} have quadratically bounded growth as stated in Lemma~\ref{lemma:descent} below.

\begin{lemma}
Let $f: \mathbb{R}^n\to \mathbb{R}$ be differentiable with coordinate-wise Lipschitz constants $L_1,\dots,L_n$. Then,
$$
f(x+h e_i) \leq f(x) + h\partial_i f(x) + \frac{L_i}{2}h^2, \qquad \forall h \in \R.
$$
\label{lemma:descent}
\end{lemma}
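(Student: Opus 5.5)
The plan is to reduce the statement to a one-dimensional problem and apply the fundamental theorem of calculus. Fix $x\in\R^n$ and the coordinate $i$, and define the scalar function
$$
\varphi(s) := f(x + s e_i), \qquad s \in \R .
$$
Since $f$ is differentiable, $\varphi$ is differentiable with $\varphi'(s) = \partial_i f(x+se_i)$ by the chain rule. The Lipschitz hypothesis \eqref{eq:lip}, applied at the base point $x$ with increment $s$, gives
$$
|\varphi'(s) - \varphi'(0)| \le L_i |s| .
$$
So $\varphi'$ is continuous, in fact Lipschitz, and is therefore integrable on bounded intervals.

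Next, write
$$
\varphi(h) - \varphi(0) = \int_0^h \varphi'(s)\,ds = h\,\varphi'(0) + \int_0^h \bigl(\varphi'(s)-\varphi'(0)\bigr)\,ds .
$$
It remains to bound the last integral by $\tfrac{L_i}{2}h^2$.

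This bound is the only step that needs care, because of the sign of $h$. For $h\ge 0$ the integrand is at most $L_i s$, and integrating gives $L_i h^2/2$. For $h<0$, substitute $s=-u$ to get
$$
\int_0^h \bigl(\varphi'(s)-\varphi'(0)\bigr)\,ds = -\int_0^{|h|} \bigl(\varphi'(-u)-\varphi'(0)\bigr)\,du \le \int_0^{|h|} L_i u\,du = \frac{L_i}{2}h^2 .
$$
Equivalently, one can bound the integral uniformly by $\bigl|\int_0^h L_i|s|\,ds\bigr| = L_i h^2/2$ in both cases.

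Substituting $\varphi(h)=f(x+he_i)$, $\varphi(0)=f(x)$ and $\varphi'(0)=\partial_i f(x)$ gives the claimed inequality.
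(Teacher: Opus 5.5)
Your proof is correct and follows the paper's route: restrict $f$ to the line $s\mapsto x+se_i$, apply the fundamental theorem of calculus, and bound $\int_0^h(\varphi'(s)-\varphi'(0))\,ds$ by $L_i h^2/2$ using the coordinate-wise Lipschitz condition. Your write-up is cleaner than the appendix's in two places. The appendix defines $g(t)=\partial_i f(x+te_i)$ where it means $f(x+te_i)$. It also does not treat $\lambda<0$ separately, and there the step $\int_0^\lambda\phi\le\int_0^\lambda|\phi|$ is not valid as written; your bound $\bigl|\int_0^h L_i|s|\,ds\bigr| = L_i h^2/2$ covers both signs. One small fix on your side: $\varphi'$ being Lipschitz on all of $\R$ follows from applying the hypothesis at every base point $x+se_i$, not only at $x$.
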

\begin{proof}
See Appendix \ref{sec:appendix}.
\end{proof}

 Using Lemma \ref{lemma:descent} with $h=-\alpha_t \partial_{i_t}f(x_t)$, one can show that the step size that maximizes the guaranteed one-step progress, i.e. the decrease of the objective function in one step, is $\alpha_t =  1/L_{i_t}$. This step-size yields a guarantee one-step decrease of:
\begin{equation}
    f(x_t) - f(x_{t+1}) \geq \frac{1}{2L_{i_t}} |\partial_{i_t}f(x_t)|^2.
\label{eq:dec_guar}
\end{equation}

From Eq. \eqref{eq:dec_guar}, we see that updating a coordinate with big gradient component seems like a good idea. If this component has a big enough fraction of the full gradient norm, then we can use PL-inequality (Definition~\ref{def:pl_ineq}) to relate the one-step progress to the current sub-optimality $f(x_t)-f^*$, and obtain exponential convergence\footnote{In the optimization literature this is called \emph{linear convergence}, since the decrease of the objective function at a given step is linear in the current sub-optimality.}.

Let's assume that our selection rule selects coordinates $i_t$ with at least a fraction $\beta \in (0,1]$ of the full gradient $\ell_2$ norm squared:
\begin{equation}
    |\partial_{i_t}f(x_t)|^2 \geq \beta \Vert \nabla f(x_t)\Vert_2^2
\label{ass:big_grad}
\end{equation}
For random update rules, this should hold in expectation.
If $i_t$ satisfies this condition, we call it a \emph{good} update. 
To guarantee exponential convergence, we need that the number of good updates grows linearly with $t$, i.e. the density of good updates is non-vanishing. Let $\mathcal{G}(t)$ be the number of good updates up to iteration $t$, then the long-run density of the good updates is  

\begin{equation}
    \liminf_{t\to\infty} \frac{\mathcal{G}(t)}{t} = \rho > 0.
\label{eq:good_dens}
\end{equation}

With the Gauss-Southwell rule, i.e. updating the coordinate with greatest absolute value gradient, we can guarantee that least $\beta \geq 1/n$ for every update. 
Another common rule is updating a uniformly random coordinate, which achieves this in expectation $\E[\beta] = 1/n$.

The following theorem is a standard result in optimization theory (see, e.g.,\cite[Thm.~1]{wright2015coordinatedescentalgorithms}).

\begin{theo}
Let $f$ be $\mu$-PL and admit coordinate-wise Lipschitz constants $L_i$. Consider the coordinate descent method \eqref{eq:coord_desc} with $\alpha_t = 1/L_{i_t}$ and using a selection rule that satisfies \eqref{ass:big_grad}. Then
$$
f(x_{t+1}) - f^* \leq \left(1 - \frac{\beta\mu}{L_{i_t}}\right)(f(x_t)-f^*).
$$
\label{thm:exp_conv}
\end{theo}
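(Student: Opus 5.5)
The plan is to combine the one-step decrease guarantee \eqref{eq:dec_guar}, the good-update condition \eqref{ass:big_grad}, and the PL inequality of Definition~\ref{def:pl_ineq}, and then subtract $f^*$ on both sides.

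\textbf{Step 1: one-step decrease.} Apply Lemma~\ref{lemma:descent} at $x=x_t$ along coordinate $i=i_t$ with $h=-\alpha_t\partial_{i_t}f(x_t)$ and $\alpha_t=1/L_{i_t}$. This gives
\[
f(x_{t+1})\le f(x_t)-\frac{1}{L_{i_t}}|\partial_{i_t}f(x_t)|^2+\frac{1}{2L_{i_t}}|\partial_{i_t}f(x_t)|^2 = f(x_t)-\frac{1}{2L_{i_t}}|\partial_{i_t}f(x_t)|^2 ,
\]
which is exactly \eqref{eq:dec_guar}.

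\textbf{Step 2: full gradient.} Since the selection rule satisfies \eqref{ass:big_grad}, we have $|\partial_{i_t}f(x_t)|^2\ge\beta\|\nabla f(x_t)\|_2^2$, so
\[
f(x_{t+1})\le f(x_t)-\frac{\beta}{2L_{i_t}}\|\nabla f(x_t)\|_2^2 .
\]

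\textbf{Step 3: PL inequality.} The $\mu$-PL property gives $\|\nabla f(x_t)\|_2^2\ge 2\mu\,(f(x_t)-f^*)$. Substituting this and subtracting $f^*$ from both sides yields
\[
f(x_{t+1})-f^*\le\Big(1-\frac{\beta\mu}{L_{i_t}}\Big)\big(f(x_t)-f^*\big),
\]
which is the claimed bound.

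\textbf{Points needing care.} Each inequality above keeps its direction because $L_{i_t}>0$ and $\beta>0$. The factor $1-\beta\mu/L_{i_t}$ lies in $[0,1)$: the right-hand side bounds $f(x_{t+1})-f^*\ge 0$, and $f(x_t)-f^*>0$ unless we are already at a minimizer, in which case the statement is trivial. For random selection rules, \eqref{ass:big_grad} holds only in expectation. Then Step 2 is applied after taking the conditional expectation given $x_t$, and the conclusion holds in conditional expectation. I expect no real obstacle beyond stating this expectation version cleanly, and noting that $f^*$ is finite because $f$ is PL and bounded below as assumed.
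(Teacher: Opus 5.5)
Your proof is correct and follows the same route as the paper. The paper cites the standard result, but the text around it and the $\xi\equiv 0$ case of the proof of Theorem~\ref{thm:perturbed_global} give the same chain: Lemma~\ref{lemma:descent} with $\alpha_t=1/L_{i_t}$ giving \eqref{eq:dec_guar}, then \eqref{ass:big_grad}, then the PL inequality, then subtracting $f^*$. One refinement to your remark on randomized rules: there $L_{i_t}$ is itself random and correlated with $|\partial_{i_t}f(x_t)|^2$, so after taking conditional expectations you must first bound $1/L_{i_t}\ge 1/L_{\max}$ before using \eqref{ass:big_grad} in expectation, and the resulting contraction factor is $1-\beta\mu/L_{\max}$ rather than $1-\beta\mu/L_{i_t}$.
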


With assumption \eqref{eq:good_dens} we obtain exponential convergence by iterating Theorem \ref{thm:exp_conv}. We know that by time $t$ we have at least $\rho t$ updates for which the theorem applies, so we obtain a rate at least
$$
-\rho \log\!\left(1-\frac{\beta\mu}{L_{\max}}\right).
$$

\medskip

We can extend Theorem~\ref{thm:exp_conv} to \emph{block descent}, in which we update a subset of coordinates $B_t \subseteq [n]$ simultaneously at each iteration:

\begin{equation}
    x_{t+1} = x_t - \alpha_t \odot \nabla f(x_t) I_{B_t},
\label{eq:block_desc}
\end{equation}
where $\odot$ denotes the Hadamard product and $\alpha_t$ is a row vector, allowing for non-homogeneous step sizes.

For each block $B \subseteq [n]$, we define the \emph{block Lipschitz constant} $L_B$ as
$$
\Vert \nabla_B f(x + hI_B) - \nabla_B f(x) \Vert_2 \leq L_B \Vert h I_B\Vert_2 \qquad \forall x,h \in \R^n.
$$

In the case of $f(x) = xAx^\top$ with $A$ positive semi-definite, the block Lipschitz coefficients are simply 
$$
L_B = \lambda_{max}(A_{BB}).
$$

\begin{theo}
Suppose $f$ satisfies the PL-inequality with parameter $\mu$ and admits block Lipschitz constants $\{L_B\}$. Consider a block descent method with $\alpha_t = 1/L_{B_t}$ and where the selected block $B_t$ satisfies the block version of \eqref{ass:big_grad}, so $\beta = \frac{  \Vert \nabla f(x) I_B \Vert_2^2}{\Vert \nabla f(x) \Vert_2^2}$. Then
$$
f(x_{t+1}) - f^* \leq \left(1 - \frac{\beta\mu}{L_{B_t}}\right)(f(x_t)-f^*).
$$
\end{theo}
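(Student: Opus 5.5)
We mirror the single-coordinate argument behind Theorem~\ref{thm:exp_conv}, with Lemma~\ref{lemma:descent} replaced by its block analogue. The plan has three steps: a block descent lemma, a one-step decrease bound, and the PL inequality.

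\emph{Step 1 (block descent lemma).} For any $h$ supported on $B$, i.e. $h = hI_B$, we show
$$
f(x+h) \le f(x) + \langle \nabla f(x) I_B, h\rangle + \frac{L_B}{2}\|h\|_2^2 .
$$
Write $f(x+h)-f(x) = \int_0^1 \langle \nabla f(x+sh), h\rangle\, ds$. Since $h$ is supported on $B$, only the $B$-block of the gradient enters this inner product. Subtract $\langle \nabla_B f(x), h\rangle$ and apply Cauchy--Schwarz together with the definition of $L_B$ to bound the integrand by $L_B s\|h\|_2^2$. Integrating over $s$ gives the factor $1/2$. This is the same computation as in the proof of Lemma~\ref{lemma:descent}, carried out in the subspace indexed by $B$.

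\emph{Step 2 (one-step decrease).} Take the update \eqref{eq:block_desc} with the homogeneous step $\alpha_t = 1/L_{B_t}$, so that $h = -\frac{1}{L_{B_t}}\nabla f(x_t) I_{B_t}$. Substituting into Step 1 gives
$$
f(x_{t+1}) \le f(x_t) - \frac{1}{2L_{B_t}}\|\nabla f(x_t) I_{B_t}\|_2^2 .
$$
By the definition of $\beta$ in the statement, the right-hand side equals
$$
f(x_t) - \frac{\beta}{2L_{B_t}}\|\nabla f(x_t)\|_2^2 .
$$

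\emph{Step 3 (PL).} The PL inequality (Definition~\ref{def:pl_ineq}) gives $\|\nabla f(x_t)\|_2^2 \ge 2\mu\,(f(x_t)-f^*)$. Hence
$$
f(x_{t+1}) - f^* \le f(x_t)-f^* - \frac{\beta\mu}{L_{B_t}}\,(f(x_t)-f^*),
$$
which is the claimed contraction.

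The main point needing care is Step 1, specifically checking that the block Lipschitz definition controls exactly the quantity that appears. The definition compares $\nabla_B f$ at $x$ and at $x+hI_B$, and in the integral we need it at $x+sh$ with $sh$ still supported on $B$. This is covered by applying the definition with $sh$ in place of $h$. Cross-block gradient components never enter, because $h$ vanishes off $B$. A minor side remark is that when $\nabla f(x_t)=0$ both sides vanish and $\beta$ is undefined; we interpret this case trivially.
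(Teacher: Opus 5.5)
Your proof is correct and follows the route the paper intends. The paper gives no separate proof here; it presents this result as the block extension of Theorem~\ref{thm:exp_conv}, whose argument is a descent lemma giving the guaranteed decrease \eqref{eq:dec_guar}, followed by the $\beta$-good condition and the PL inequality. Your Step~1 is a clean block analogue of Lemma~\ref{lemma:descent}, and Steps~2--3 then yield $f(x_{t+1}) \le f(x_t) - \frac{\beta}{2L_{B_t}}\|\nabla f(x_t)\|_2^2 \le f(x_t) - \frac{\beta\mu}{L_{B_t}}(f(x_t)-f^*)$ as required.
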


\section{Coordinate Descent for Reversible Chains}
\label{sec:cd_rev}

\subsection{Finding an energy function for RLGL}

We would like to interpret the RLGL update \eqref{eq:rlgl_update} as a gradient descent step for some energy function. This means that the residue vector $r(x)=x(I-P)$ should be a gradient field. It's clear that this is not possible in general, since the Jacobian of a gradient field needs to be symmetric, and this is true if and only if $P$ itself is symmetric. 

To recover a gradient interpretation, we look for an appropriate coordinate transformation. Since the RLGL update is linear, the transformation must also be linear. Hence we seek an invertible matrix $M \in \mathbb{R}^{n \times n}$ such that, in the new coordinates $y = xM$, 
the update becomes the gradient of a quadratic energy. 

\begin{lemma}
The RLGL dynamics can be represented as gradient descent on a quadratic energy if and only if $P$ is similar to a symmetric matrix. 
\end{lemma}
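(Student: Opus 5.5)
We make the statement precise as follows. There is an invertible $M$ such that, in the coordinates $y = xM$, the RLGL residual map $x \mapsto x(I-P)$ becomes (up to sign) the gradient of a quadratic $\Phi(y) = \frac12 y A y^\top$ with $A$ symmetric. The argument is linear algebra: we compute the transformed update, impose the symmetry of the Jacobian of a gradient field, and read off the similarity condition.

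\emph{Transformed dynamics.} With $y = xM$ and $x = yM^{-1}$, the full residual in $y$-coordinates is
\[
g(y) := x(I-P)M = y\,M^{-1}(I-P)M .
\]
A linear vector field $y \mapsto yK$ is a gradient field if and only if $K$ is symmetric. In that case it equals $\nabla \Phi$ with $\Phi(y) = \frac12 yKy^\top$. The ``if'' direction holds because $\nabla(\frac12 yKy^\top) = \frac12 y(K+K^\top) = yK$. The ``only if'' direction holds because the Hessian of a $C^2$ potential, here the Jacobian $K^\top$, is symmetric. Hence RLGL is gradient descent on a quadratic energy exactly when $M^{-1}(I-P)M$ is symmetric for some invertible $M$. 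This is equivalent to $M^{-1}PM = I - M^{-1}(I-P)M$ being symmetric, that is, to $P$ being similar to a symmetric matrix. This settles both directions of the lemma.

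\emph{Block structure.} We must also check that the block projection $I_{B_t}$ is compatible with the change of variables, so that RLGL becomes coordinate descent rather than merely descent along some direction. For this we restrict to diagonal $M = D$. Then
\[
y_{t+1} = x_{t+1}D = y_t + r_t I_{B_t} D = y_t - \nabla\Phi(y_t)\,D^{-1} I_{B_t} D = y_t - \nabla\Phi(y_t) I_{B_t},
\]
because diagonal matrices commute with $I_{B_t}$. So with diagonal $M$, the RLGL step is exactly block descent \eqref{eq:block_desc} on the coordinates $B_t$. For a reversible chain, the choice $D = \Pi^{1/2}$ works. It turns $P$ into $\Pi^{1/2}P\Pi^{-1/2}$, which is symmetric by detailed balance \eqref{eq:DB}, and then $\Phi = \En$ from \eqref{eq:dir_energy}. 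For the ``only if'' direction no diagonality is needed, since the full-update case $B_t=[n]$ already forces the similarity condition.

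\emph{Main obstacle.} The delicate point is interpretation, not computation. For a general symmetrizing $M$, the phrase ``represented as gradient descent'' has to be read so that the coordinate structure is allowed to change; otherwise the equivalence only holds for diagonal similarity. I would state the general equivalence for the full update, which is the content of the lemma. I would then note the diagonal case separately, since that is the one relevant to block coordinate descent.
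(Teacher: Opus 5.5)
Your argument is essentially the paper's: pass to $y=xM$, note that the residual becomes $y\,M^{-1}(I-P)M$, and use that a linear field $y\mapsto yK$ is a gradient if and only if $K$ is symmetric. You also make the ``only if'' direction explicit, which the paper only gestures at through the Jacobian-symmetry remark before the lemma. And you rightly observe that $I_{B_t}$ commutes with the change of variables only for diagonal $M$, a point the paper's proof passes over.

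One slip: under your convention $y=xD$, the reversible symmetrizer must be $D=\Pi^{-1/2}$, as in the paper. With $D=\Pi^{1/2}$ you get $D^{-1}PD=\Pi^{-1/2}P\Pi^{1/2}$, which is not symmetric in general.
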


\begin{proof}
Suppose $y = xM$ with $M$ invertible. 
Then in the $y$--coordinates the residue becomes
$$
\tilde r(y) = r(yM^{-1})M = y(I - M^{-1} P M).
$$
If $M^{-1}PM$ is symmetric, then $\tilde r(y)$ is a gradient field with potential
\[
\Phi(y) = \tfrac{1}{2} y (I - M^{-1} P M) y^\top.
\]
Thus RLGL admits a gradient interpretation when $P$ is similar to a symmetric matrix.
\end{proof}

In particular, if $P$ is reversible with stationary distribution $\pi$, then the linear transformation
$$
M = \Pi^{-1/2}, \qquad \Pi = \mathrm{diag}(\pi),
$$
yields $\Pi^{1/2} P \Pi^{-1/2}$, which is symmetric by detailed balance \eqref{eq:DB}. In this case, the quadratic energy is the Dirichlet energy \eqref{eq:dir_energy}.


\subsection{RLGL as block descent}

As established above, for reversible chains, the coordinate transformation $y=x\Pi^{-1/2}$ allows us to rigorously interpret the RLGL dynamics as a gradient descent process on the Dirichlet energy. We can now use this optimization framework to formally connect RLGL to block coordinate descent and analyze its convergence.

\begin{theo}
    Let $P$ be an irreducible, reversible transition matrix. If the updated block $B_t$ is an independent set, the RLGL update matches a block descent update with an optimal step size in the symmetrized coordinates.
\label{thm:equiv}
\end{theo}

\begin{proof}
Let $y = x\Pi^{-1/2}$. In these coordinates, the associated energy is the Dirichlet energy $\En(y) = \tfrac12\, y \mathcal{L} y^\top$,
whose gradient is $\nabla_y \En(y) = y\mathcal{L}.$ Furthermore, the rescaled residual is given by  $\tilde r = -y\mathcal{L}$, so that $\tilde r=-\nabla_y \En(y)$.

Consider any update $u$ supported on $B_t$ (so $u_i=0$ for $i\notin B_t$). The new iterate is $y_{t+1} = y_t + u$, and the energy after the update reads
$$
\En(y_{t+1}) 
= \tfrac12 (y_t+u)\mathcal{L}(y_t+u)^\top
= \En(y_t) - \tilde r_{B_t} u_{B_t}^\top + \tfrac12 u_{B_t}\, \mathcal{L}_{B_tB_t}\, u_{B_t}^\top,
$$
where $\tilde r_{B_t}$ is the restriction of $\tilde r$ to $B_t$ and $\mathcal{L}_{B_tB_t}$ is the principal submatrix of $\mathcal{L}$ on $B_t$. 
This expression is a strictly convex quadratic in $u_{B_t}$, whose unique minimizer gives the optimal block update. The first-order condition is

$$
- \tilde r_{B_t} + u_{B_t}\,\mathcal{L}_{B_tB_t} = 0,
$$
so the optimal block update is
$$
u_{B_t}^* \;=\; \tilde r_{B_t}\, \mathcal{L}_{B_tB_t}^{-1}.
$$

Since $B_t$ is an independent set and the chain is irreducible, $\mathcal{L}_{B_tB_t}=I$, so the optimal update simplifies to $u_{B_t}^* = \tilde r_{B_t}$, meaning the new iterate is obtained simply by adding the residual on the updated coordinates: 
$$
y_{t+1} = y_t + \tilde r_tI_{B_t}.
$$
Comparing this to \eqref{eq:block_desc}, we see that the per-coordinate step size is exactly $1$.

Transforming back into $x$ coordinates, we obtain
$$
x_{t+1}\Pi^{-1/2} = x_t\Pi^{-1/2} - (x_t\Pi^{-1/2})\mathcal{L}I_{B_t}
= x_t\Pi^{-1/2} - x_t(I-P)\Pi^{-1/2}I_{B_t}.
$$
Multiplying on the right by $\Pi^{1/2}$ yields
$$
x_{t+1} = x_t - x_t(I-P)I_{B_t},
$$
which is exactly the RLGL update \eqref{eq:rlgl_update}.
\end{proof}

Since with these assumptions RLGL is a block descent method and the Dirichlet energy \eqref{eq:dir_energy} satisfies the hypothesis of Theorem \ref{thm:exp_conv}, we obtain exponential convergence.

\begin{theo}
If $P$ is reversible with Poincaré constant $\mu$, then one $\beta$-good update yields
\begin{equation}
\En(y_{t+1}) \leq \left(1-\frac{\beta\mu}{L_{B_t}}\right)\En(y_t).
\label{eq:contraction}
\end{equation}
In particular, if the update sequence uses independent sets and has a fraction $\rho$ of $\beta$-good updates, then RLGL converges exponentially, with a rate of at least 
$$
-\rho \log{ \left(1 - \beta \mu \right) }.
$$
\label{thm:rlgl_rev}
\end{theo}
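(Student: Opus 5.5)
The plan is to reduce the statement to the block version of Theorem~\ref{thm:exp_conv}, applied to the Dirichlet energy in the coordinates $y = x\Pi^{-1/2}$, using Theorem~\ref{thm:equiv} to identify RLGL with block descent.

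First, since $P$ is reversible, $\L = I - \Pi^{1/2}P\Pi^{-1/2}$ is symmetric positive semi-definite, and its kernel is spanned by $\sqrt{\pi}$. Hence $\En^* = \inf_y \En(y) = 0$. By Lemma~\ref{lem:PSD-PL}, $\En$ satisfies the PL inequality with constant equal to the Poincaré constant $\mu$, so
$$
\En(y) \le \frac{1}{2\mu}\Vert y\L\Vert_2^2 .
$$
For a block $B$ the block Lipschitz constant is $L_B = \lambda_{\max}(\L_{BB})$.

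Second, I specialize to the blocks used by RLGL. When $B_t$ is an independent set, as in the proof of Theorem~\ref{thm:equiv}, $\L_{B_tB_t} = I$. This holds because the diagonal entries of $\L$ are $1 - P_{ii} = 1$ (I assume no self-loops on updated nodes, as that proof implicitly does), and the off-diagonal entries within an independent set vanish. So $L_{B_t} = 1$, and the optimal step size $1/L_{B_t}$ is exactly the unit step that Theorem~\ref{thm:equiv} shows RLGL performs.

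Third, I verify the one-step decrease directly, so as not to rely on the unstated block descent lemma. Let $g = \nabla\En(y_t) = y_t\L$ and $u = -gI_{B_t}$. Then
$$
\En(y_t+u) = \En(y_t) - \Vert gI_{B_t}\Vert_2^2 + \tfrac12\, u_{B_t}\L_{B_tB_t}u_{B_t}^\top = \En(y_t) - \tfrac12\Vert gI_{B_t}\Vert_2^2 .
$$
By $\beta$-goodness, $\Vert gI_{B_t}\Vert_2^2 \ge \beta\Vert g\Vert_2^2$, and by PL, $\Vert g\Vert_2^2 \ge 2\mu\,\En(y_t)$. Combining these gives $\En(y_{t+1}) \le (1-\beta\mu)\En(y_t)$, which is \eqref{eq:contraction} with $L_{B_t}=1$.

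Finally, I iterate. Every RLGL step is a descent step, since the cross term computed above shows the energy never increases for any block. Steps that are not $\beta$-good therefore contribute a factor at most $1$, while good steps contribute $1-\beta\mu$. By \eqref{eq:good_dens}, there are at least $(\rho-\varepsilon)t$ good steps for large $t$, so
$$
\En(y_t) \le (1-\beta\mu)^{(\rho-\varepsilon)t}\En(y_0).
$$
This gives the rate $-\rho\log(1-\beta\mu)$. Proposition~\ref{prop:residual-energy} then transfers the rate to the residual.

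The main obstacle is justifying monotonicity on non-good steps and the identity $L_{B_t}=1$, which requires $P_{ii}=0$ on the block. If self-loops are present, I would instead work with $L_{B_t} = 1 - \min_{i\in B_t} P_{ii} \le 1$ and note that the unit step is still a descent step. This gives a contraction of at least $1-\beta\mu$.
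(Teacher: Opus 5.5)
Your proposal is correct and follows the paper's route. Theorem~\ref{thm:equiv} identifies RLGL on independent sets with unit-step block descent on $\En$, Lemma~\ref{lem:PSD-PL} gives the PL inequality with the Poincar\'e constant $\mu$ and $L_{B_t}=1$, and you then iterate over the density $\rho$ of good updates. Your explicit computation of the exact decrease $\tfrac12\|\nabla\En(y_t)I_{B_t}\|_2^2$, your monotonicity argument for non-good steps, and your flagging of the implicit no-self-loop assumption fill in details the paper leaves to the block version of Theorem~\ref{thm:exp_conv}.
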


\begin{remark}
If the updated block $B_t$ is not an independent set, then $\mathcal{L}_{B_tB_t}$ is not the identity. 
In this case, RLGL still decreases the energy, but the step size is generally suboptimal. 
For example, if $B_t=\{i\}$ with $P_{ii}>0$, i.e., $i$ has a self-loop, then the optimal step size on coordinate $i$ would be $(1-P_{ii})^{-1}$ rather than $1$. 
Thus exponential convergence is preserved, but the guaranteed per-step decrease is smaller.
\end{remark}

From Theorem~\ref{thm:equiv} we have a new interpretation of RLGL: one update minimizes the Dirichlet energy in the subspace of the updated coordinates when the updated block is an independent set. This makes it look like an Orthogonal-subspace method \cite{saad2003iterative}.

 When the updated block $B_t$ is an independent set, the optimal block update yields an exact energy decrease of
\begin{equation}
  \En(y_t)-\En(y_{t+1}) \;=\; \tfrac12\|\tilde r_{B_t}\|_2^2.  
  \label{eq:en_dec}
\end{equation}
so the Dirichlet energy strictly decreases whenever the block residual is nonzero. 
Crucially, this statement holds irrespective of whether there is any local residual cancellation of the original "raw" residual vector in the $x$-coordinates.


\subsection{When coordinate descent outperforms power iteration}

We now compare the exponential convergence rates of coordinate descent (CD) from Theorem~\ref{thm:rlgl_rev} and power iteration (PI). Since our focus is on computational efficiency, we measure progress per unit of arithmetic cost. Updating a single coordinate of the residual in CD requires work proportional to the out-degree of the chosen node, so that $n$ coordinate updates correspond roughly to one matrix–vector multiplication in PI. This normalization is exact on average when the coordinate is selected uniformly at random.

Recall that PI converges at a rate governed by the second eigenvalue of $P$: the $\ell_2$–error decays as $\lambda_2^t$, or equivalently at an exponential rate of $-\log \lambda_2$. For CD, Theorem~\ref{thm:exp_conv} guarantees that after $n$ coordinate updates the Dirichlet energy decreases at a rate of at least
$$
-n \rho \log\!\bigl(1-\beta(1-\lambda_2)\bigr).
$$
Under uniform random selection, we have $\rho=1$ and $\E[\beta] = 1/n$. We use Proposition~\ref{prop:residual-energy} to compare the two.

In the asymptotic regime $n \gg 1$, CD with the Gauss-Soutwell or random selection is always slower than PI, since
$$
-\log \lambda_2 \;\geq\frac{1-\lambda_2}{2} \qquad \text{for all } \lambda_2 \in [0,1].
$$

However, CD can outperform PI if the coordinate rule achieves a larger $\beta$. Specifically, for $\rho=1$, CD beats PI whenever
$$
\beta \;\geq\; \frac{1-\lambda_2^{2/n}}{1-\lambda_2} \;=\; \frac{2}{n} + \frac{(n-2)(1-\lambda_2)}{n^2} + o\bigl((1-\lambda_2)^2\bigr).
$$
Thus, surpassing PI requires that the chosen coordinate captures more than the average $2/n$ share of the total residual. 

The Gauss--Southwell rule (after the first iteration) achieves a $\beta$ is strictly greater than $1/n$. Empirically, we consistently observe that the selected coordinate accounts for more than $2/n$ of the total residual. The same empirical observation was noted in \cite{OPIC}, where the residual vector is non-negative. 
\begin{figure}[H]
    \centering
    \includegraphics[width=0.65\linewidth]{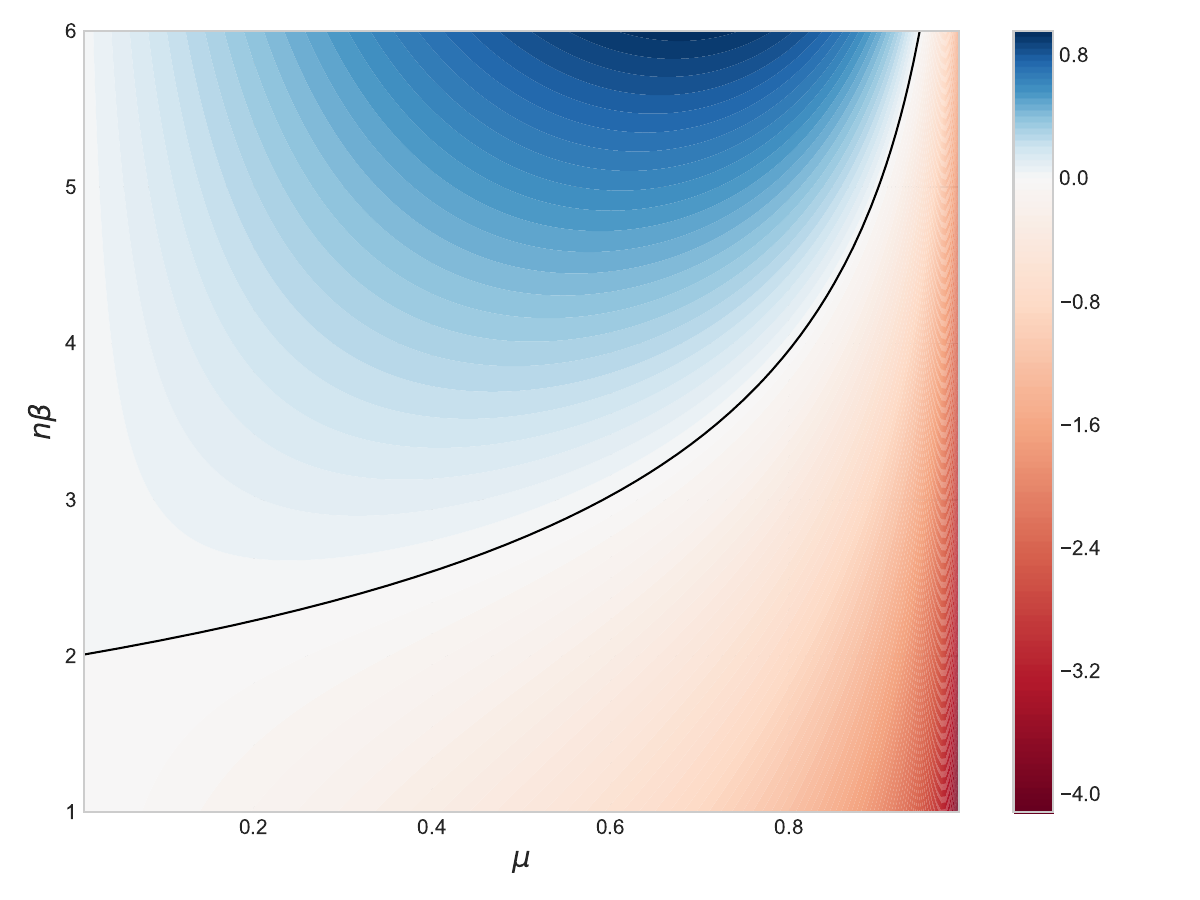}
    \caption{Comparison of convergence rates of Coordinate Descent and Power iteration, for $n=100$. Plotted difference $r_{CD} - r_{PI}$. The blue region is where $n$ iterations of CD beat one PI, in red the converse. The black line shows when $r_{CD} = r_{PI}$.}
    \label{fig:CDvsPI}
\end{figure}

These results suggest a natural interpretation: to surpass PI, the algorithm must avoid diffusing the residual evenly across all coordinates. Instead, it is advantageous to let the residual accumulate so that a few coordinates carry a significant portion of the error, allowing each update to make more substantial progress. In sparse networks this concentration occurs naturally, which explains why CD can outperform PI in practice even when theoretical upper bounds based on the spectral gap $\mu$ are pessimistic. 

\section{Extension to Nearly Reversible Chains}
\label{sec:nearly_rev}
In this section we extend Theorem~\ref{thm:exp_conv} beyond the reversible setting. Our goal is to identify conditions, under which exponential convergence can still be guaranteed for general, possibly non-reversible, Markov chains. To this end, we decompose an arbitrary transition matrix into a reversible component and a perturbation term.

We then analyze the behavior of coordinate-descent dynamics in the presence of this linear perturbation, and derive conditions, under which exponential convergence is preserved. When specialized to our setting, these conditions involve the row norm of the irreversible part of the transition matrix and the Poincar\'e constant of the chain. As we will show, for single-coordinate update rules—such as the Greedy Gauss--Southwell and the Random rule—to guarantee a one-step decrease in the energy, the irreversibility of the chain must be vanishingly small, in a precise sense described below.


\subsection{Irreversible chains as perturbed coordinate descent}

We now relax the reversibility assumption. In this setting, convergence of the RLGL updates is no longer guaranteed in general.  
For example, on the directed cycle one can keep pushing the residual along the cycle indefinitely without ever reducing its norm, and even some aperiodic chains exhibit this phenomenon~\cite{RLGL}.  
Thus, in the irreversible setting we must impose additional structural assumptions on the chain, the update sequence, and their interaction.

Our approach is to interpret RLGL as coordinate descent with a perturbed gradient.  
The perturbation arises from the antisymmetric part of the dynamics, and we will derive spectral conditions, under which it remains small enough to preserve convergence.

Let $P$ be an arbitrary (not necessarily reversible) Markov chain with stationary distribution $\pi$.  
Define the symmetric and antisymmetric components of $P$ in the $\pi$-weighted coordinates by
$$
S = \tfrac{1}{2}\,\Pi^{1/2}(P + P^*)\Pi^{-1/2},
\qquad
A = \tfrac{1}{2}\,\Pi^{1/2}(P - P^*)\Pi^{-1/2},
$$
so that $S^\top = S$ and $A^\top = -A$.  
With this notation, the normalized Laplacian decomposes as
$$
\mathcal{L}_{\mathrm{norm}}
= \Pi^{1/2}(I-P)\Pi^{-1/2}
= (I-S) - A
= \L - A,
$$
Writing the updates in normalized coordinates $y_t = x_t \Pi^{-1/2}$, the RLGL rule \eqref{eq:rlgl_update} becomes
\begin{equation}
    y_{t+1}
    = y_t - y_t\bigl[(I-S) - A\bigr] I_{B_t}
    = \underbrace{y_t - y_t\L I_{B_t}}_{\text{descent step}}
      \;+\;
      \underbrace{y_t A I_{B_t}}_{\text{perturbation}}.
\label{eq:pert_up}
\end{equation}
The first term is a coordinate descent step for the Dirichlet energy, while the second term is the
perturbation induced by the irreversible dynamics.
If this perturbation remains sufficiently small, in a sense made precise below, RLGL still exhibits exponential convergence.

To quantify how much each coordinate deviates from reversibility, we introduce the following \emph{local irreversibility} coefficients for each state:
\begin{equation}
    \kappa_i
    := \|A_{i,:}\|_2
    = \frac{1}{2\sqrt{\pi_i}}
        \left( \sum_{j=1}^n
               \frac{1}{\pi_j}\,(\pi_j p_{ji} - \pi_i p_{ij})^2
        \right)^{1/2}.
\label{eq:kappa_i}
\end{equation}
The quantity $\kappa_i$ captures the local irreversibility at state $i$. It measures how strongly the chain violates detailed balance at $i$, by weight averaging the squared net probability currents with respect to the stationary distribution. When $\kappa_i = 0$, state $i$ satisfies detailed balance with all other states. Thus, $\kappa_i$ quantifies the extent to which the dynamic around $i$ fails to be reversible.

We now introduce two global irreversibility coefficients: one based on the $\ell_\infty$ norm and the other based on the $\ell_2$ norm.
\begin{equation}
\label{eq:eta}
\eta_\infty := \max_{i\in[n]}\frac{\kappa_i}{\mu}, \qquad \eta_2 := \frac{1}{\mu} \left(\sum_{i=1}^n \kappa_i^2\right)^{1/2},
\end{equation}
where $\mu$ is the Poincaré constant of $P$. Note that we have the bound $\eta_2 \leq \sqrt{n}\, \eta_\infty$.

\begin{defn}[Nearly reversible chains]
We say that a Markov chain with transition matrix $P$ is \emph{nearly reversible} if
$$
\eta_\infty < \frac{1}{2n + \sqrt{n}},
$$
\label{def:almost_rev}
\end{defn}

This threshold is chosen because it is precisely the regime in which single–coordinate
update rules remain effective: as we will see in Corollary~\ref{cor:random_greedy_explicit},
such a bound on $\eta_\infty$ guarantees that both the Greedy Gauss--Southwell rule and the
Random rule produce a one-step decrease in energy and therefore achieve exponential convergence.



\subsection{Coordinate descent with linear perturbation}

As observed in \eqref{eq:pert_up}, an RLGL update can be decomposed into a descent step plus an additional perturbation term. This perturbation is linear in the current iterate and arises from the irreversible component of the transition matrix \eqref{eq:rev_dec}. If this perturbation is sufficiently small, one may still expect convergence.

To make this precise, we analyze a generic coordinate descent update in the presence of a linear perturbation and derive explicit conditions under which convergence is preserved. Additive perturbations for full gradient descent have been studied in prior work (see, e.g., \cite{gd_perturb}); here we adapt the core argument to the coordinate-wise setting.

Motivated by the updates \eqref{eq:pert_up}, we will study the case when, for each coordinate $i$, the perturbation is a linear function of the current iterate:
$$
\xi_i(x_t) = \bigl\langle x_t, v^{(i)}\bigr\rangle,
$$
for some fixed $v^{(i)}\in\mathbb{R}^n$. Set  $\kappa_i := \Vert v^{(i)}\Vert_2$ to be  $\ell_2$-norm of these vectors, and let $\kappa := \max_i \kappa_i$. (We will show in Section~\ref{sec:nearly-reversible} that these are the same $\kappa$'s as defined in \eqref{eq:kappa_i}.)

At iteration $t$ we pick coordinate $i_t$ and perform the update
\begin{equation}
x_{t+1}
= x_t
- \alpha_t\,\partial_{i_t}f(x_t)\,e_{i_t}
- \xi_{i_t}(x_t)\,e_{i_t}.
\label{eq:pert_cd}
\end{equation}
We impose a natural consistency condition that the perturbation must vanish at the minimizer:
\begin{equation}\label{ass:pert_consistency}
\langle v^{(i)}, x^*\rangle = 0,\qquad \forall i\in[n],
\end{equation}
which ensures that $x^*$ remains a fixed point of the dynamics \eqref{eq:pert_cd}. The next theorem gives a  sufficient condition for exponential convergence of perturbed coordinate descent.

\begin{theo}
\label{thm:perturbed_global}
Let $f:\mathbb{R}^n\to\mathbb{R}$ be differentiable and $\mu$-strongly convex, and assume each partial derivative $\partial_i f$ is $L_i$-Lipschitz. Let $L_{\max} := \max_i L_i$. Suppose the selection rule guarantees updates that are \emph{good} with parameter $\beta\in(0,1]$ (as above). Let the perturbations satisfy the above Assumption \ref{ass:pert_consistency}.

If
\begin{equation}
    \kappa < \frac{\mu\sqrt{\beta}}{L_{\max}},
\label{eq:kappa_cond}
\end{equation}
then for a fixed step size $\alpha_t = 1/L_{i_t}$, the iterates converge exponentially to $x^*$. Specifically, for any good update $i_t$, the per-step decrease is at least:
$$
f(x_{t+1}) - f^*\leq \left(1 - \frac{\beta\mu}{L_{i_t}} + \frac{L_{i_t}\kappa_{i_t}^2}{\mu}\right)(f(x_t) - f^*),
$$
leading to an overall rate of at least
$$
-\log \left( 1 - \frac{\beta\mu}{L_{max}} + \frac{\kappa^2}{\mu} \right).
$$
\end{theo}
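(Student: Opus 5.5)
The plan is to treat the perturbed update \eqref{eq:pert_cd} as a single coordinate step along $e_{i_t}$ with combined displacement $h = -\tfrac{1}{L_{i_t}}\partial_{i_t}f(x_t) - \xi_{i_t}(x_t)$. We then apply Lemma~\ref{lemma:descent}:
\[
f(x_{t+1}) \le f(x_t) + h\,\partial_{i_t} f(x_t) + \tfrac{L_{i_t}}{2}h^2 .
\]
Writing $g = \partial_{i_t}f(x_t)$ and $\xi = \xi_{i_t}(x_t)$, the right-hand side expands to $f(x_t) - \tfrac{g^2}{L_{i_t}} - \xi g + \tfrac{L_{i_t}}{2}\bigl(\tfrac{g^2}{L_{i_t}^2} + \tfrac{2g\xi}{L_{i_t}} + \xi^2\bigr)$. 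The cross terms cancel exactly, which gives
\[
f(x_t) - f(x_{t+1}) \ge \tfrac{1}{2L_{i_t}} g^2 - \tfrac{L_{i_t}}{2}\xi^2 .
\]
This is the key identity: the perturbation costs only a quadratic penalty $\tfrac{L_{i_t}}{2}\xi^2$ and does not interfere linearly with the gradient step.

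Next we bound both terms by the suboptimality $f(x_t)-f^*$. For the gradient term, the good-update assumption \eqref{ass:big_grad} gives $g^2 \ge \beta\|\nabla f(x_t)\|_2^2$. Strong convexity implies the PL inequality (Lemma~\ref{lem:stronly-convex}), so $g^2 \ge 2\mu\beta\,(f(x_t)-f^*)$. For the perturbation term, the consistency condition \eqref{ass:pert_consistency} lets us write $\xi = \langle x_t - x^*, v^{(i_t)}\rangle$. Cauchy--Schwarz then gives $\xi^2 \le \kappa_{i_t}^2 \|x_t - x^*\|_2^2$. By strong convexity with $\nabla f(x^*)=0$, $\|x_t-x^*\|_2^2 \le \tfrac{2}{\mu}(f(x_t)-f^*)$. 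Combining,
\[
f(x_{t+1}) - f^* \le \Bigl(1 - \tfrac{\beta\mu}{L_{i_t}} + \tfrac{L_{i_t}\kappa_{i_t}^2}{\mu}\Bigr)(f(x_t)-f^*),
\]
which is the claimed per-step bound.

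Finally we need a uniform rate. We replace $L_{i_t}$ by $L_{\max}$ in the negative term and $\kappa_{i_t}$ by $\kappa$. Condition \eqref{eq:kappa_cond} is exactly $\tfrac{L_{\max}\kappa^2}{\mu} < \tfrac{\beta\mu}{L_{\max}}$, so the factor is strictly less than $1$. The stated overall rate $-\log(1-\tfrac{\beta\mu}{L_{\max}}+\tfrac{\kappa^2}{\mu})$ corresponds to the normalisation $L_{\max}\le 1$ (as in the Dirichlet setting, where $L_i = \L_{ii}\le 1$); I would make this explicit or keep the factor $L_{\max}$. Iterating over good updates gives exponential decay of $f(x_t)-f^*$. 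Strong convexity then transfers this decay to $\|x_t - x^*\|$.

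The main obstacle is the non-good steps. On these steps the perturbation could increase $f$ by up to $\tfrac{L\kappa^2}{\mu}(f-f^*)$, so the density argument \eqref{eq:good_dens} needs either that all updates are good or a compensating bound. I would first restrict the statement to selection rules where every step is good, such as Gauss--Southwell, or to random rules in expectation. For a density $\rho<1$, I would note that non-good steps still satisfy $f(x_{t+1})-f^*\le(1+\tfrac{L_{\max}\kappa^2}{\mu})(f(x_t)-f^*)$, and require the product of the factors over good and non-good steps to stay below $1$ on average.
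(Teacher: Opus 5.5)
Your proof is correct and follows the paper's argument essentially step for step. You apply the coordinate descent lemma to the combined step and note that the cross terms cancel. You then bound the gradient term via $\beta$-goodness plus PL, and the perturbation term via the consistency condition, Cauchy--Schwarz and strong convexity. Your observation that the stated uniform rate drops a factor $L_{\max}$ in the $\kappa^2/\mu$ term, so that it is only valid when $L_{\max}\le 1$ as in the Dirichlet setting, is a fair correction to the statement rather than a gap in your argument.
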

\begin{proof}
The update along coordinate $i_t$ is $\Delta_t = -\alpha_t \partial_{i_t}f(x_t) + \xi_{i_t}(x_t)$. Lemma \ref{lemma:descent} gives
$$
f(x_{t+1}) \leq f(x_t) + \partial_{i_t}f(x_t)\Delta_t + \frac{L_{i_t}}{2}\Delta_t^2.
$$
We choose the standard fixed step size $\alpha_t = 1/L_{i_t}$. Substituting this into the update $\Delta_t$ and then into the inequality above yields:
\begin{align*}
f(x_{t+1}) &\leq f(x_t) - \partial_{i_t}f(x_t)\left(\frac{1}{L_{i_t}}\partial_{i_t}f(x_t) - \xi_{i_t}(x_t)\right) + \frac{L_{i_t}}{2}\left(\frac{1}{L_{i_t}}\partial_{i_t}f(x_t) - \xi_{i_t}(x_t)\right)^2 \\
&= f(x_t) - \frac{1}{L_{i_t}}|\partial_{i_t}f(x_t)|^2 - \partial_{i_t}f(x_t)\xi_{i_t}(x_t) + \frac{1}{2L_{i_t}}|\partial_{i_t}f(x_t)|^2 + \partial_{i_t}f(x_t)\xi_{i_t}(x_t) + \frac{L_{i_t}}{2}|\xi_{i_t}(x_t)|^2 \\
&= f(x_t) - \frac{1}{2L_{i_t}}|\partial_{i_t}f(x_t)|^2 + \frac{L_{i_t}}{2}|\xi_{i_t}(x_t)|^2.
\end{align*}
Now, we bound the last two terms. Since $f$ is $\mu$-strong convex, by Lemma~\ref{lem:stronly-convex} $f$ is also $\mu$-PL. Thus for a $\beta$-good update \eqref{ass:big_grad}, we obtain
$$
|\partial_{i_t} f(x_t)|^2 \geq 2\beta\mu (f(x_t)-f(x^*)).
$$
For the perturbation, we use the $\mu$-strong convexity of $f$ and the consistency condition \eqref{ass:pert_consistency}:
$$
|\xi_{i_t}(x_t)|^2 = |\langle x_t, v^{(i_t)}\rangle|^2 = |\langle x_t-x^*, v^{(i_t)}\rangle|^2 \leq \kappa_{i_t}^2\Vert x_t-x^*\Vert_2^2 \leq \frac{2\kappa_{i_t}^2}{\mu}(f(x_t)-f(x^*)).
$$
Substituting these bounds into our descent inequality, we get:
$$
f(x_{t+1}) \leq f(x_t) - \frac{1}{2L_{i_t}}\left(2\beta\mu(f(x_t)-f^*)\right) + \frac{L_{i_t}}{2}\left(\frac{2\kappa_{i_t}^2}{\mu}(f(x_t)-f^*)\right).
$$
Subtracting the optimal value $f^*$ from both sides and rearranging gives the one-step contraction:
$$
f(x_{t+1})-f^* \leq \left(1-\frac{\beta\mu}{L_{i_t}} + \frac{L_{i_t}\kappa_{i_t}^2}{\mu}\right)(f(x_t)-f^*).
$$
Condition \eqref{eq:kappa_cond} ensures that this is a contraction. By considering the worst-case values $L_{\max}$ and $\kappa$, we obtain the desired exponential convergence rate.
\end{proof}


Computing the optimal step size $\alpha_t^*$ for the perturbed update yields
\begin{equation}
    \alpha_t^* = \frac{1}{L_{i_t}} + \frac{\xi_{i_t}(x_t)}{\partial_{i_t}f(x_t)},
\label{eq:pert_alpha}
\end{equation}
which is the same as the unperturbed case plus a the ratio of the perturbation and the partial derivative with respect to the updated coordinate. Thus if the latter is small, the unperturbed step size is close to the optimal one.

\subsection{RLGL on nearly reversible chains}\label{sec:nearly-reversible}

Our goal is to show that if the Markov chain is nearly reversible, and the update sequence satisfies a condition of the form \eqref{ass:big_grad}, then the RLGL algorithm converges exponentially. We achieve this by applying Theorem~\ref{thm:perturbed_global}, since in the irreversible case we can view the RLGL update as perturbed coordinate descent \eqref{eq:pert_up}.

The perturbation term in \eqref{eq:pert_up} takes the form 
$$
\xi_{i_t}(y_t) = (y_tA)e_{i_t} = \langle y_t, A_{:,i_t}\rangle,
$$
i.e., the perturbation vector is the $i_t$-th column of the anti-symmetric matrix $A$, so $v^{(i)} = A_{:,i}$, meaning $\kappa_i =  \Vert A_{:,i} \Vert_2$, which is precisely our measure of local irreversibility \eqref{eq:kappa_i}.

We define two residuals:
$$
s_t = y_t\mathcal{L}_\text{sym}, \qquad r_t=y_t\mathcal{L}_{{\rm norm}}.
$$
The residual $s_t$ corresponds to the gradient of the Dirichlet energy, and is the quantity that appears in the convergence theorems for coordinate descent. However, the residual actually observed in RLGL during execution is $r_t$. In the reversible case the two coincide, but in the irreversible setting this distinction becomes essential: the update sequence must be 
$\beta-$good with respect to $s_t$, while selection happens using $r_r$.
We therefore define
$$
|s_{t,i}|^2\geq \beta_\text{sym}\Vert s_t\Vert_2^2 , \qquad |r_{t,i}|^2\geq \beta_{\rm norm}\Vert r_t\Vert_2^2. 
$$
Later, in Proposition~\ref{prop:beta_transfer}, we will quantify how far the two residuals can differ under the nearly-reversible assumption and show how we can relate $\beta_\text{sym}$ and $\beta_{\rm norm}$.

We now prove a bound for the perturbation term, which we will need to apply Theorem~\ref{thm:perturbed_global} to the updates based on $r_t$. Recall that the Dirichlet energy \eqref{eq:dir_energy} is not strongly convex, but it is $\mu$-PL.

\begin{lemma}
    For each coordinate $i$,
    $$
    |(yA)_i|^2 \leq \frac{2\kappa_i^2}{\mu}\En(y), \qquad \forall y\in \R^n.
    $$
\label{lemma:perturb_bound}
\end{lemma}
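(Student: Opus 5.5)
The plan is to mimic the perturbation estimate in the proof of Theorem~\ref{thm:perturbed_global}, replacing strong convexity by the weaker structure of the Dirichlet energy: $\En$ is flat along $\sqrt{\pi}$ and $\mu$-strongly convex on the orthogonal complement of $\sqrt{\pi}$.

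The first step is to check that the perturbation is insensitive to the flat direction, i.e.\ $\sqrt{\pi}A = 0$. This is the analogue of the consistency condition \eqref{ass:pert_consistency}. Since $\sqrt{\pi}$ spans the kernel of both $\L$ and $\mathcal{L}_{\mathrm{norm}} = \L - A$, we get $\sqrt{\pi}A = \sqrt{\pi}\L - \sqrt{\pi}\mathcal{L}_{\mathrm{norm}} = 0$. One can also verify this directly from $\pi P = \pi$ and $P\1^\top = \1^\top$: both $\sqrt{\pi}\,\Pi^{1/2}P\Pi^{-1/2}$ and $\sqrt{\pi}\,\Pi^{1/2}P^*\Pi^{-1/2}$ equal $\sqrt{\pi}$. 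By antisymmetry, $A\sqrt{\pi}^\top = -(\sqrt{\pi}A)^\top = 0$ as well, so each column $A_{:,i}$ is orthogonal to $\sqrt{\pi}$.

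Next, decompose $y = c\sqrt{\pi} + z$ with $z \perp \sqrt{\pi}$. Then $(yA)_i = (zA)_i = \langle z, A_{:,i}\rangle$, and Cauchy--Schwarz gives $|(yA)_i|^2 \le \|A_{:,i}\|_2^2\,\|z\|_2^2$. Because $A$ is antisymmetric, $\|A_{:,i}\|_2 = \|A_{i,:}\|_2 = \kappa_i$. This matches the paper's identification of $\kappa_i$ with the column norm in Section~\ref{sec:nearly-reversible}.

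Finally, bound $\|z\|_2^2$ by the energy. Since $\L$ is symmetric PSD with kernel spanned by $\sqrt{\pi}$ and smallest nonzero eigenvalue $\mu$, expanding $z$ in the eigenbasis gives $z\L z^\top \ge \mu\|z\|_2^2$. Moreover $\En(y) = \tfrac12 z\L z^\top$, since $\sqrt{\pi}\L = 0$ kills the cross terms. Hence $\|z\|_2^2 \le \tfrac{2}{\mu}\En(y)$. Combining this with the previous step yields $|(yA)_i|^2 \le \tfrac{2\kappa_i^2}{\mu}\En(y)$.

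The only point needing care is the first step, namely that the perturbation vanishes on the flat direction of $\En$. Without it, the PL-type bound would fail, because $\En$ alone does not control the component of $y$ along $\sqrt{\pi}$. Once $\sqrt{\pi}A = 0$ is established, the remaining steps are routine spectral estimates.
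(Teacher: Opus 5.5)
Your proof is correct and takes the same route as the paper. You remove the $\sqrt{\pi}$ component using the consistency condition, apply Cauchy--Schwarz with $\|A_{:,i}\|_2=\kappa_i$, and then use $\mu$-strong convexity of $\En$ on $\sqrt{\pi}^\perp$ together with $\En(y)=\En(y^\perp)$. Your explicit check that $\sqrt{\pi}A=0$ is a useful addition, because the paper invokes the consistency condition \eqref{ass:pert_consistency} here without verifying it for $v^{(i)}=A_{:,i}$.
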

\begin{proof}
Let $y^\perp$ be the projection of $y$ in $Ker(\L)^\perp$. By the consistency condition \eqref{ass:pert_consistency} and Cauchy-Schwarz,
$$
|\xi_i(y)| = |\xi_i(y^\perp)| \leq \kappa_i \Vert y^\perp \Vert_2.
$$
In $Ker(\L)^\perp$ the Dirichlet energy is $\mu$-strongly convex, and the unique minimizer is $0$, hence
$$
\En(y^\perp) \geq \frac{\mu}{2}\Vert y^\perp\Vert_2^2.
$$
Noting that $\En(y)=\En(y^\perp)$, we obtain
$$
|\xi_i(y)|^2 \leq \frac{2\kappa_i^2}{\mu} \En(y).
$$
\end{proof}

\begin{theo}
\label{thm:rlgl_conv}
Let $P$ be a Markov chain with Poincaré constant $\mu$. Then one $\beta_\text{sym}$-good update yields:
$$
\En(y_{t+1}) \leq \left(1 - \frac{\beta_\text{sym}\mu}{L_{i_t}} + \frac{L_{i_t}\kappa_{i_t}^2}{\mu}\right)\En(y_t).
$$
If $P$ is such that
\begin{equation}
\eta_\infty < \frac{\sqrt{\beta_\text{sym}}}{L_{max}},
\label{eq:eta_cond}
\end{equation}
and $P$ has no self loops ($p_{ii} = 0$ for all $i$), and the update sequence $(i_t)_{t\geq0}$ has only $\beta_\text{sym}$-good updates, then the RLGL algorithm converges exponentially with a rate of at least
$$
-\log \left( 1 - \beta_\text{sym}\,\mu + \frac{\kappa^2}{\mu} \right).
$$
\end{theo}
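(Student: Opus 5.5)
The plan is to re-run the proof of Theorem~\ref{thm:perturbed_global} with $f=\En$ in the coordinates $y=x\Pi^{-1/2}$. There are two changes. First, strong convexity is replaced by the $\mu$-PL property of the Dirichlet energy (Lemma~\ref{lem:PSD-PL}). Second, the perturbation bound is taken from Lemma~\ref{lemma:perturb_bound} instead of from strong convexity.

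By \eqref{eq:pert_up}, a single-coordinate RLGL update on $i_t$ reads
\[
y_{t+1}=y_t-(y_t\L)_{i_t}e_{i_t}+(y_tA)_{i_t}e_{i_t}
=y_t-\partial_{i_t}\En(y_t)\,e_{i_t}+\xi_{i_t}(y_t)\,e_{i_t}.
\]
This is exactly the perturbed step \eqref{eq:pert_cd} with step size $1$. The coordinate Lipschitz constant of $\partial_i\En$ is $L_i=\L_{ii}=1-p_{ii}$, so the no-self-loop assumption gives $L_i=1$ and the step $\alpha_t=1/L_{i_t}=1$ is the one used in Theorem~\ref{thm:perturbed_global}. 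In the general one-step statement I will keep $L_{i_t}$ symbolic; if $p_{i_t i_t}>0$ the step is $1$ rather than $1/L_{i_t}$, and I would only track the one-step inequality in the form stated.

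For the one-step contraction, I apply Lemma~\ref{lemma:descent} with $h=\Delta_t=-\partial_{i_t}\En(y_t)+\xi_{i_t}(y_t)$. When $L_{i_t}=1$, the same algebra as in Theorem~\ref{thm:perturbed_global} gives
\[
\En(y_{t+1})\le \En(y_t)-\tfrac{1}{2L_{i_t}}|s_{t,i_t}|^2+\tfrac{L_{i_t}}{2}|\xi_{i_t}(y_t)|^2 .
\]
Since $\En^*=0$ and $\En$ is $\mu$-PL, the $\beta_\text{sym}$-goodness with respect to $s_t=\nabla\En(y_t)$ yields $|s_{t,i_t}|^2\ge 2\beta_\text{sym}\mu\,\En(y_t)$. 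Lemma~\ref{lemma:perturb_bound} yields $|\xi_{i_t}(y_t)|^2\le 2\kappa_{i_t}^2\En(y_t)/\mu$. Here the consistency condition \eqref{ass:pert_consistency} holds because $\sqrt{\pi}A=\tfrac12(\pi P-\pi P^*)\Pi^{-1/2}=0$, using $\pi P=\pi P^*=\pi$. Substituting these two bounds gives the stated inequality.

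For global convergence, I set $L_{i_t}=L_{\max}=1$ and bound $\kappa_{i_t}\le\kappa$, so the factor is at most $1-\beta_\text{sym}\mu+\kappa^2/\mu$. Writing $\kappa=\eta_\infty\mu$, condition \eqref{eq:eta_cond} gives $\kappa^2/\mu=\eta_\infty^2\mu<\beta_\text{sym}\mu$, so the factor is strictly below $1$. The factor is also nonnegative, because it bounds a ratio of energies, or directly because $\beta_\text{sym}\mu\le1$. Since every update is good, iterating the contraction gives exponential decay of $\En(y_t)$ at the claimed rate. Proposition~\ref{prop:residual-energy} then transfers this decay to the residual.

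The main obstacle is bookkeeping rather than depth. The first issue is justifying $L_i=1$ and the unit step, which uses $\L_{ii}=1-p_{ii}$ and the absence of self-loops. The second is verifying the consistency condition for $A$. The third is making sure goodness is measured against $s_t$ and not against the observed residual $r_t$. That last point is precisely why the theorem is stated with $\beta_\text{sym}$; the relation to $\beta_{\rm norm}$ is left to Proposition~\ref{prop:beta_transfer}.
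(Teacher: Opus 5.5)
Your proposal is correct and follows the paper's proof. Like the paper, you rerun Theorem~\ref{thm:perturbed_global} with $f=\En$, replacing strong convexity by the PL property and Lemma~\ref{lemma:perturb_bound}, and use $L_i=1-p_{ii}=1$ under the no-self-loop assumption so that the RLGL step coincides with the step $1/L_{i_t}$. Two of your points go slightly beyond the paper, which glosses over both: your explicit check that $\sqrt{\pi}A=0$, and your caveat about $p_{i_ti_t}>0$. For the literal unit step with $L_{i_t}<1$, the cross term $(1-L_{i_t})\,s_{t,i_t}\,\xi_{i_t}(y_t)$ no longer cancels, so the general-$L_{i_t}$ one-step bound really concerns the step $1/L_{i_t}$ rather than the RLGL update itself.
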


\begin{proof}
The only step in the proof of Theorem~\ref{thm:perturbed_global} that required $\mu$-strong convexity is the bound on the perturbation term. In our case, we can use Lemma~\ref{lemma:perturb_bound}. 

The condition for convergence from Theorem \ref{thm:perturbed_global} is $\kappa < \mu\sqrt{\beta_\text{sym}}/L_{\max}$. This condition is ensured by the definition of $\eta_\infty=\kappa/\mu$ in \eqref{eq:eta} and \eqref{eq:eta_cond}. 
Applying Theorem~\ref{thm:perturbed_global}, for a good update we have a one-step contraction of at least:
$$
\En(y_{t+1}) \leq \left(1 - \frac{\beta_\text{sym}\:\mu}{L_{i_t}} + \frac{L_{i_t}\kappa_{i_t}^2}{\mu}\right)\En(y_t).
$$
The Lipschitz constants are $L_i = 1-p_{ii}$. For a chain with no self-loops, $p_{ii}=0$, so $L_i=1$ for all $i$, and $L_{\max}=1$.
Substituting $L_{i_t}=1$ and taking the worst-case bounds over all coordinates we obtain the stated convergence rate.
\end{proof}

Computing the energy decrease per-step in the case of no self-loops, we find:
$$
\En(y_t)-\En(y_{t+1}) = \frac{1}{2}|s_t|^2 - \frac{1}{2}|\xi_{i_t}(x_t)|^2,
$$
which shows that if the perturbation is smaller than the residual, the energy decreases.

Our objective is now to guarantee that a $\beta_{\rm norm}$-good coordinate remains $\beta_\text{sym}$-good for some positive, possibly smaller, $\beta_\text{sym}$. To achieve this, we need to compare the two residuals.

\begin{lemma}
For every coordinate $i$,
$$
\frac{|r_{t,i} - s_{t,i}|}{\|s_t\|_2} \le \eta_\infty, \qquad \frac{\Vert r_{t} - s_{t}\Vert_2}{\|s_t\|_2} \le \eta_2.
$$
\end{lemma}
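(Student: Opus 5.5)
The plan is to express the difference $r_t - s_t$ through the antisymmetric matrix $A$, and then reuse the mechanism of Lemma~\ref{lemma:perturb_bound}. In that lemma the perturbation was bounded by the component of $y$ orthogonal to $\sqrt{\pi}$, and here we bound that component by $\|s_t\|_2$ rather than by $\En(y_t)$.

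\emph{Step 1: identify the difference.} Since $\mathcal{L}_{\rm norm} = \L - A$, we have
$$
r_t - s_t = y_t(\mathcal{L}_{\rm norm} - \L) = -y_tA .
$$
So $(r_t - s_t)_i = -\langle y_t, A_{:,i}\rangle$. Because $A^\top = -A$, the column norm $\|A_{:,i}\|_2$ equals the row norm $\|A_{i,:}\|_2 = \kappa_i$ from \eqref{eq:kappa_i}.

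\emph{Step 2: kill the kernel component.} Write $y_t = c\sqrt{\pi} + y_t^\perp$ with $y_t^\perp \perp \sqrt{\pi}$. We know $\sqrt{\pi}\,\mathcal{L}_{\rm norm} = 0$ and $\sqrt{\pi}\,\L = 0$, since both kernels are spanned by $\sqrt{\pi}$. Hence $\sqrt{\pi}A = \sqrt{\pi}(\L - \mathcal{L}_{\rm norm}) = 0$, which is the consistency condition \eqref{ass:pert_consistency}. It follows that
$$
r_t - s_t = -y_t^\perp A \quad\text{and}\quad s_t = y_t^\perp \L .
$$

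\emph{Step 3: bound $y^\perp$ by the symmetric residual.} $\L$ is symmetric positive semidefinite with kernel spanned by $\sqrt{\pi}$. Expanding $y_t^\perp$ in the eigenvectors of $\L$ with nonzero eigenvalues, all of which are at least $\mu$, gives
$$
\|s_t\|_2 = \|y_t^\perp\L\|_2 \ge \mu\|y_t^\perp\|_2 .
$$
This is the one genuinely new ingredient compared to Lemma~\ref{lemma:perturb_bound}. There the bound used the energy, $\En \ge \tfrac{\mu}{2}\|y^\perp\|^2$; here we need the gradient form. I expect it to be the main point requiring care, although it is a direct spectral argument.

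\emph{Step 4: conclude.} For the coordinatewise bound, Cauchy--Schwarz gives
$$
|r_{t,i}-s_{t,i}| \le \kappa_i\|y_t^\perp\|_2 \le \frac{\kappa_i}{\mu}\|s_t\|_2 \le \eta_\infty\|s_t\|_2 .
$$
For the $\ell_2$ bound, summing the squares of the same coordinatewise estimates gives
$$
\|r_t-s_t\|_2^2 \le \Big(\sum_i\kappa_i^2\Big)\|y_t^\perp\|_2^2 \le \eta_2^2\|s_t\|_2^2 .
$$
The degenerate case $s_t = 0$ forces $y_t^\perp = 0$, and hence $r_t = s_t = 0$. In that case the ratios are interpreted as $0$ (or the statement is read in the multiplied-out form).
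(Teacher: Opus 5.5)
Your proof is correct and is essentially the paper's argument: the paper bounds $|(y_tA)_i|^2 \le \tfrac{2\kappa_i^2}{\mu}\En(y_t)$ via Lemma~\ref{lemma:perturb_bound} and then applies the PL inequality $\En(y_t)\le \tfrac{1}{2\mu}\|s_t\|_2^2$; these two steps compose to exactly your direct spectral bound $\|y_t^\perp\|_2\le \|s_t\|_2/\mu$. Your version also does three things the paper does not spell out: it verifies the consistency condition $\sqrt{\pi}A=0$ explicitly, it gets the sign $r_t-s_t=-y_tA$ right, and it handles the degenerate case $s_t=0$.
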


\begin{proof}
From Lemma~\ref{lemma:perturb_bound} we have
$$
|r_{t,i} - s_{t,i}|^2
= |(y_t A)_i|^2
\;\le\; \frac{2\kappa_i^2}{\mu}\,\mathcal{E}(y_t).
$$
Using the PL inequality \eqref{def:pl_ineq},
$$
\mathcal{E}(y_t) \le \frac{1}{2\mu}\|s_t\|_2^2,
$$
so we obtain
$$
\frac{|r_{t,i} -  s_{t,i}|^2}{\| s_t\|_2^2}
\le \frac{\kappa_i^2}{\mu^2}.
$$
Using the definition of $\eta_\infty$ and taking square roots yields the first bound. Summing over the coordinates yields the second.
\end{proof}

\begin{prop}
\label{prop:beta_transfer}
Let $P$ be nearly reversible. If a coordinate $i$ is $\beta_{\rm norm}$-good, such that
\begin{equation}
\label{eq:beta_alpha_star_inf}
  \gamma^\star_\infty := \frac{\sqrt{n}\,\eta_\infty}{1-\sqrt{n}\,\eta_\infty}  \leq \sqrt{\beta_{\rm norm}},
\end{equation}
then the same coordinate is $\beta_\text{sym}$-good with
$$
\beta_{\mathrm{sym}}
\ge \left(\frac{\sqrt{\beta_{\rm norm}}-\gamma^\star_\infty}{1+\gamma^\star_\infty}\right)^2.
$$
\end{prop}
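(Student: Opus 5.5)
The plan is to compare the two residuals coordinatewise and in norm, using the preceding Lemma (the bounds $|r_{t,i}-s_{t,i}|\le \eta_\infty\|s_t\|_2$ and $\|r_t-s_t\|_2\le \eta_2\|s_t\|_2$). Everything is then converted into a bound \emph{relative to $\|r_t\|_2$}, because $\beta_{\rm norm}$ is measured against $\|r_t\|_2$. The constant $\gamma^\star_\infty$ should be exactly the relative error $\|r_t-s_t\|_2/\|r_t\|_2$.

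\textbf{Step 1: relative error against $\|r_t\|_2$.} From the preceding Lemma and $\eta_2\le\sqrt n\,\eta_\infty$ we get $\|r_t-s_t\|_2\le \sqrt n\,\eta_\infty\|s_t\|_2$. The triangle inequality gives $\|s_t\|_2\le \|r_t\|_2+\|r_t-s_t\|_2\le \|r_t\|_2+\sqrt n\,\eta_\infty\|s_t\|_2$. Near reversibility (Definition~\ref{def:almost_rev}) gives $\eta_\infty<1/(2n+\sqrt n)$, so in particular $\sqrt n\,\eta_\infty<1$. Hence we may rearrange to
\[
\|s_t\|_2\le \frac{\|r_t\|_2}{1-\sqrt n\,\eta_\infty}
\quad\text{and therefore}\quad
\|r_t-s_t\|_2\le \gamma^\star_\infty\|r_t\|_2 .
\]

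\textbf{Step 2: lower bound on $|s_{t,i}|$.} If coordinate $i$ is $\beta_{\rm norm}$-good, then $|r_{t,i}|\ge\sqrt{\beta_{\rm norm}}\|r_t\|_2$. Bounding the single-coordinate error crudely by the full norm, $|r_{t,i}-s_{t,i}|\le\|r_t-s_t\|_2\le\gamma^\star_\infty\|r_t\|_2$, we obtain
\[
|s_{t,i}|\ge \bigl(\sqrt{\beta_{\rm norm}}-\gamma^\star_\infty\bigr)\|r_t\|_2 .
\]
Hypothesis \eqref{eq:beta_alpha_star_inf} is used exactly here: it guarantees the factor in parentheses is nonnegative, so squaring the inequality later is legitimate.

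\textbf{Step 3: upper bound on $\|s_t\|_2$ and conclusion.} From $\|s_t\|_2\le\|r_t\|_2+\|r_t-s_t\|_2\le(1+\gamma^\star_\infty)\|r_t\|_2$, dividing gives
\[
\frac{|s_{t,i}|}{\|s_t\|_2}\ge \frac{\sqrt{\beta_{\rm norm}}-\gamma^\star_\infty}{1+\gamma^\star_\infty}.
\]
Squaring yields the claimed bound on $\beta_{\rm sym}$. The degenerate case $s_t=0$ needs a separate remark: then $r_t=0$ by the preceding Lemma, and the claim is vacuous.

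\textbf{Main obstacle.} The only delicate point is Step~1: recognizing that the given $\gamma^\star_\infty$ comes from expressing the error relative to $\|r_t\|_2$ rather than $\|s_t\|_2$. This requires $\sqrt n\,\eta_\infty<1$, which is what near reversibility supplies. The rest is triangle inequalities.
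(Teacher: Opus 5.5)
Your proof is correct and follows essentially the same route as the paper. Both arguments bound $\|r_t-s_t\|_2/\|r_t\|_2$ by $\gamma^\star_\infty$ (using $\sqrt n\,\eta_\infty<1$) and then apply the triangle inequality on the good coordinate and on $\|s_t\|_2$. The differences are cosmetic: you substitute $\gamma^\star_\infty$ directly, whereas the paper keeps $\gamma_t$ and then uses that $(\sqrt{\beta_{\rm norm}}-\gamma)^2/(1+\gamma)^2$ is decreasing in $\gamma$; and you also treat the degenerate case $s_t=0$, which the paper omits.
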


\begin{proof}
Define $\delta_t = r_t - s_t = y_t A$. From the coordinate-wise bound of Lemma~\ref{lemma:perturb_bound} and the definition $\kappa:=\max_i \kappa_i$ we get
$$
\|\delta_t\|_2 \le \frac{\sqrt{n}\,\kappa}{\mu}\,\|s_t\|_2 \le \sqrt{n}\,\eta_\infty\,\|s_t\|_2.
$$
Set $\gamma_t := \|\delta_t\|_2/\|r_t\|_2$. We have
$$
\|r_t\|_2 = \|s_t+\delta\|_2 \ge \|s_t\|_2 - \|\delta_t\|_2 \ge (1-\sqrt{n}\,\eta_\infty)\|s_t\|_2,
$$
and hence
$$
\gamma_t = \frac{\|\delta_t\|_2}{\|r_t\|_2} \le \frac{\sqrt{n}\,\eta_\infty}{1-\sqrt{n}\,\eta_\infty} = \gamma^\star_\infty.
$$
Furthermore, by Definition~\ref{def:almost_rev}, it holds that $0<\gamma_t\leq 1$.

Since by assumption the update is $\beta_{norm}$-good, we have $|r_{t,i}|\ge \sqrt{\beta_{\rm norm}}\,\|r_t\|_2$, hence
$$
|s_{t,i}| \ge |r_{t,i}| - |\delta_i| \ge (\sqrt{\beta_{\rm norm}}-\gamma_t)\|r_t\|_2.
$$
Also $\|s_t\|_2 \le \|r_t\|_2 + \|\delta\|_2 = (1+\gamma_t)\|r_t\|_2$. Therefore
$$
\frac{|s_{t,i}|^2}{\|s_t\|_2^2} \ge \frac{(\sqrt{\beta_{\rm norm}}-\gamma_t)^2}{(1+\gamma_t)^2}.
$$
The right-hand side is decreasing in $\gamma_t$ for $0\leq \gamma_t\leq \sqrt{\beta_{\rm norm}}$, so substituting $\gamma_t=\gamma^\star_\infty$ yields the stated lower bound for $\beta_{\rm sym}$.
\end{proof}

We can obtain an analogous bound using $\eta_2$.

\begin{prop}
\label{prop:beta_transfer_l2}
Let $P$ be nearly reversible. If a coordinate $i$ is $\beta_{\rm norm}$-good, such that
\begin{equation}
\label{eq:beta_alpha_star_l2}
  \gamma^\star_2 := \frac{\eta_2}{1-\eta_2}  \leq \sqrt{\beta_{\rm norm}},
\end{equation}
then the same coordinate is $\beta_\text{sym}$-good with
$$
\beta_{\mathrm{sym}}
= \left(\frac{\sqrt{\beta_{\rm norm}}-\gamma^\star_2}{1+\gamma^\star_2}\right)^2.
$$
\end{prop}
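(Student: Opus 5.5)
The argument mirrors the proof of Proposition~\ref{prop:beta_transfer}; the only change is that we control the perturbation globally through $\eta_2$ instead of coordinatewise. Set $\delta_t := r_t - s_t = y_t A$. By the $\ell_2$ part of the preceding lemma (the one comparing $r_t$ and $s_t$), we have directly
$$
\|\delta_t\|_2 \le \eta_2\,\|s_t\|_2 .
$$
This avoids the factor $\sqrt{n}$ that appeared when $\kappa$ was bounded coordinatewise.

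\textbf{Step 1: bound $\|\delta_t\|_2/\|r_t\|_2$.} By the reverse triangle inequality,
$$
\|r_t\|_2 \ge \|s_t\|_2 - \|\delta_t\|_2 \ge (1-\eta_2)\|s_t\|_2 .
$$
Therefore
$$
\gamma_t := \frac{\|\delta_t\|_2}{\|r_t\|_2} \le \frac{\eta_2}{1-\eta_2} = \gamma_2^\star .
$$
This requires $\eta_2<1$. Near reversibility gives it: $\eta_2 \le \sqrt{n}\,\eta_\infty < \sqrt{n}/(2n+\sqrt{n}) < 1$. In any case the hypothesis $\gamma_2^\star \le \sqrt{\beta_{\rm norm}} \le 1$ already forces $\eta_2 \le 1/2$.

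\textbf{Step 2: transfer goodness.} The coordinate is $\beta_{\rm norm}$-good, so $|r_{t,i}| \ge \sqrt{\beta_{\rm norm}}\,\|r_t\|_2$. Using $|\delta_{t,i}| \le \|\delta_t\|_2 = \gamma_t\|r_t\|_2$, we get
$$
|s_{t,i}| \ge |r_{t,i}| - |\delta_{t,i}| \ge (\sqrt{\beta_{\rm norm}}-\gamma_t)\|r_t\|_2 .
$$
By the triangle inequality, $\|s_t\|_2 \le \|r_t\|_2 + \|\delta_t\|_2 = (1+\gamma_t)\|r_t\|_2$. Dividing the two bounds gives
$$
\frac{|s_{t,i}|^2}{\|s_t\|_2^2} \ge \left(\frac{\sqrt{\beta_{\rm norm}}-\gamma_t}{1+\gamma_t}\right)^2 .
$$
Squaring is legitimate because $\gamma_t \le \gamma_2^\star \le \sqrt{\beta_{\rm norm}}$ makes the base nonnegative.

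\textbf{Step 3: monotonicity.} The map $g(\gamma) = (\sqrt{\beta_{\rm norm}}-\gamma)/(1+\gamma)$ has derivative $-(1+\sqrt{\beta_{\rm norm}})/(1+\gamma)^2 < 0$. So $g$ is decreasing and nonnegative on $[0,\sqrt{\beta_{\rm norm}}]$, and so is $g^2$. Replacing $\gamma_t$ by its upper bound $\gamma_2^\star$ therefore gives the claimed value of $\beta_{\rm sym}$, understood as a lower bound on the goodness parameter. The degenerate case $s_t = 0$ is trivial: then $\delta_t = 0$ and $y_t$ is stationary.

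\textbf{Main obstacle.} I expect no real obstacle. The one point needing care is the well-definedness condition $\eta_2<1$ in Step 1, which is handled as described there.
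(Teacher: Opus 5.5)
Your proof is correct and follows the paper's route: the paper also gets $\|r_t-s_t\|_2\le\eta_2\|s_t\|_2$ by summing the coordinatewise bounds from Lemma~\ref{lemma:perturb_bound} (combined with the PL inequality), and then repeats the argument of Proposition~\ref{prop:beta_transfer} with $\gamma^\star_2$ in place of $\gamma^\star_\infty$. One small slip that nothing depends on: your remark that the hypothesis alone forces $\eta_2\le 1/2$ presupposes $\eta_2<1$, since for $\eta_2>1$ one has $\gamma^\star_2<0$ and the hypothesis holds vacuously; near reversibility already gives $\eta_2<1$, so your argument stands.
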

\begin{proof}
We start by summing up the inequalities of Lemma~\ref{lemma:perturb_bound}. The rest of the proof follows.
\end{proof}

\begin{theo}
\label{thm:rlgl_nearly_rev_combined}
Let $P$ be nearly reversible. If the update schedule satisfies
\begin{equation}
    \eta_\infty^2 < \left(\frac{\sqrt{\beta_{\rm norm}}-\gamma^\star}{1+\gamma^\star}\right)^2 =: \tilde \beta,
\label{eq:beta_tilde}
\end{equation}
where $\gamma^\star$ is defined as in \eqref{eq:beta_alpha_star_inf} or \eqref{eq:beta_alpha_star_l2}. 
Then the RLGL algorithm converges exponentially, with a rate of at least
$$
-\log\left(1-\tilde \beta\mu + \frac{\kappa^2}{\mu}\right)
$$
\end{theo}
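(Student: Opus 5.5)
The plan is to chain Proposition~\ref{prop:beta_transfer} (or Proposition~\ref{prop:beta_transfer_l2}) with Theorem~\ref{thm:rlgl_conv}.

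\textbf{Step 1: transfer goodness.} By hypothesis every selected coordinate $i_t$ is $\beta_{\rm norm}$-good with respect to the observed residual $r_t$. The statement implicitly requires $\gamma^\star \le \sqrt{\beta_{\rm norm}}$, since otherwise $\tilde\beta$ is not a meaningful lower bound. I will check this, adding it as a standing assumption if it does not follow from the near-reversibility of $P$. Applying Proposition~\ref{prop:beta_transfer} with $\gamma^\star=\gamma^\star_\infty$, or Proposition~\ref{prop:beta_transfer_l2} with $\gamma^\star=\gamma^\star_2$, then shows that each $i_t$ is $\beta_{\rm sym}$-good with $\beta_{\rm sym}\ge \tilde\beta$. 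Since goodness with a larger constant implies goodness with a smaller one, every update is $\tilde\beta$-good with respect to $s_t=\nabla\En(y_t)$.

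\textbf{Step 2: apply the contraction.} I will invoke Theorem~\ref{thm:rlgl_conv} with $\beta_{\rm sym}=\tilde\beta$. Its hypothesis \eqref{eq:eta_cond} reads $\eta_\infty<\sqrt{\tilde\beta}/L_{\max}$. Under the no-self-loop convention of that theorem we have $L_i=1-p_{ii}=1$, so $L_{\max}=1$, and the hypothesis reduces to $\eta_\infty^2<\tilde\beta$, which is exactly \eqref{eq:beta_tilde}.

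The one-step bound of Theorem~\ref{thm:rlgl_conv} then gives
\[
\En(y_{t+1})\le \Bigl(1-\tilde\beta\mu+\tfrac{\kappa_{i_t}^2}{\mu}\Bigr)\En(y_t)\le \Bigl(1-\tilde\beta\mu+\tfrac{\kappa^2}{\mu}\Bigr)\En(y_t).
\]
The factor is below $1$ because $\kappa^2/\mu=\mu\,\eta_\infty^2<\mu\tilde\beta$. It is nonnegative because it bounds a ratio of nonnegative energies; alternatively, $\tilde\beta\le 1$ and $\mu\le 2$ could be checked directly. Iterating over all $t$, since every update is good, yields exponential decay of $\En(y_t)$ at the stated rate. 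By Proposition~\ref{prop:residual-energy}, the residual then decays exponentially at the same rate, and hence so does the error of $\hat\pi_t$.

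\textbf{Main obstacle.} The delicate point is bookkeeping rather than analysis. The statement omits the no-self-loop hypothesis, so either it is inherited from Theorem~\ref{thm:rlgl_conv} or I must carry $L_{\max}=\max_i(1-p_{ii})\le 1$ through the argument. In the latter case the contraction factor becomes $1-\tilde\beta\mu/L_{i_t}+L_{i_t}\kappa_{i_t}^2/\mu$. With $L_{i_t}\le 1$ the perturbation term only improves, but the descent term weakens, so the stated rate is no longer immediate. I would first try to handle this by restricting to the no-self-loop case, or by using the optimal step $1/L_{i_t}$ as in the Remark after Theorem~\ref{thm:rlgl_rev}.

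A second subtlety is that Proposition~\ref{prop:beta_transfer} requires $\gamma_t\le\gamma^\star$ at every step. This holds uniformly in $t$ because it relies only on Lemma~\ref{lemma:perturb_bound} and the PL inequality, both of which are iterate-independent.
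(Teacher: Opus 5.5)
Your proposal is correct and follows the paper's proof exactly. Proposition~\ref{prop:beta_transfer} (or~\ref{prop:beta_transfer_l2}) upgrades $\beta_{\rm norm}$-goodness to $\beta_{\rm sym}\ge\tilde\beta$, and Theorem~\ref{thm:rlgl_conv} with $L_{\max}=1$ turns \eqref{eq:beta_tilde} into \eqref{eq:eta_cond}; the paper's one-line proof silently relies on the no-self-loop assumption and on $\gamma^\star\le\sqrt{\beta_{\rm norm}}$, both of which you rightly flag. One slip in your side remark: with self-loops RLGL still takes step $1$ rather than $1/L_{i_t}$, so the factor $1-\tilde\beta\mu/L_{i_t}+L_{i_t}\kappa_{i_t}^2/\mu$ does not describe RLGL at all, and even if it did, $L_{i_t}\le 1$ would strengthen rather than weaken the descent term — so restricting to $p_{ii}=0$ is the right resolution.
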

\begin{proof}
    By Proposition~\ref{prop:beta_transfer} or Proposition~\ref{prop:beta_transfer_l2}, $\beta_\text{sym} \geq \tilde\beta$. We then apply Theorem~\ref{thm:rlgl_conv}.
\end{proof}


\begin{corollary}
\label{cor:random_greedy_explicit}
Let $P$ be nearly reversible. Then the RLGL algorithm converges exponentially for the greedy Gauss-Southwell and random rules.
\end{corollary}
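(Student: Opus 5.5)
The plan is to reduce the corollary to Theorem~\ref{thm:rlgl_nearly_rev_combined} by computing an explicit $\beta_{\rm norm}$ for each rule and verifying condition \eqref{eq:beta_tilde} under the nearly reversible bound $\eta_\infty < 1/(2n+\sqrt n)$. We work with the $\ell_\infty$ version, $\gamma^\star=\gamma^\star_\infty = \sqrt n\,\eta_\infty/(1-\sqrt n\,\eta_\infty)$.

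\textbf{Step 1: a lower bound on $\beta_{\rm norm}$ for each rule.} For the greedy Gauss--Southwell rule we pick $i_t=\argmax_j |r_{t,j}|$, so $|r_{t,i_t}|^2\ge \|r_t\|_2^2/n$ deterministically, giving $\beta_{\rm norm}\ge 1/n$. For the random rule, $\E|r_{t,i_t}|^2=\|r_t\|_2^2/n$, so $\beta_{\rm norm}=1/n$ in expectation. For the random rule we will rerun the argument of Theorem~\ref{thm:perturbed_global} in conditional expectation. The one-step inequality
\[
\En(y_{t+1})\le \En(y_t)-\tfrac12|s_{t,i_t}|^2+\tfrac12|\xi_{i_t}|^2
\]
is linear in the squared quantities. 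Averaging over uniform $i_t$ gives $\E|s_{t,i_t}|^2=\|s_t\|_2^2/n$ directly, so $\beta_{\rm sym}=1/n$ in expectation for random selection, with no transfer needed. The perturbation term is bounded by $\kappa^2\|y^\perp\|^2$ on average via Lemma~\ref{lemma:perturb_bound}.

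\textbf{Step 2: algebra.} Write $\eta=\eta_\infty$ and $a=\sqrt n\,\eta<\sqrt n/(2n+\sqrt n)=1/(2\sqrt n+1)$. Then
\[
\gamma^\star=\frac{a}{1-a}<\frac{1}{2\sqrt n},
\]
which in particular is at most $1/\sqrt n\le\sqrt{\beta_{\rm norm}}$, so Proposition~\ref{prop:beta_transfer} applies. This gives
\[
\sqrt{\tilde\beta}=\frac{1/\sqrt n-\gamma^\star}{1+\gamma^\star}>\frac{1/(2\sqrt n)}{1+1/(2\sqrt n)}=\frac{1}{2\sqrt n+1}.
\]
We then need $\eta<\sqrt{\tilde\beta}$. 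Since $\eta<1/(2n+\sqrt n)\le 1/(2\sqrt n+1)<\sqrt{\tilde\beta}$ for $n\ge1$, condition \eqref{eq:beta_tilde} holds. The same bound also makes the contraction factor $1-\tilde\beta\mu+\kappa^2/\mu=1-\mu(\tilde\beta-\eta^2)$ strictly less than one. We also need it to be nonnegative, which follows from $\mu\le 2$ and the PL inequality. For the random rule, the analogous check is $\eta^2<1/n$, which is immediate.

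\textbf{Step 3: no self-loops.} Theorem~\ref{thm:rlgl_conv} assumed $L_{\max}=1$, i.e.\ no self-loops. For the corollary we either assume this implicitly or replace $\mu$ by $\mu/L_{\max}$ with $L_i=1-p_{ii}\le1$. Since $L_i\le 1$, the bound only improves the descent term, though the perturbation term $L_i\kappa_i^2/\mu$ is also bounded with $L_i\le1$, so it remains valid.

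\textbf{Main obstacle.} The delicate point is the random rule. The $\beta$-good notion is pathwise in Propositions~\ref{prop:beta_transfer}--\ref{prop:beta_transfer_l2}, but for random selection it holds only in expectation, and the transfer between $r_t$ and $s_t$ is nonlinear. The plan above sidesteps this by working with $s_t$ directly in expectation. This yields exponential decay of $\E\,\En(y_t)$, which is the sense in which convergence is claimed for the random rule.
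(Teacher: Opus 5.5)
Your argument is correct. For the greedy rule it follows the paper's route: take $\beta_{\rm norm}=1/n$, transfer goodness with Proposition~\ref{prop:beta_transfer}, and feed the result into Theorem~\ref{thm:rlgl_conv}. The only difference there is the final check. The paper verifies the stronger sufficient condition $\sqrt n\,\eta_\infty<\sqrt{\beta_{\rm sym}}$, which comes from the lossy bound $\kappa\le\mu\sqrt n\,\eta_\infty$ (although $\kappa=\mu\eta_\infty$ by definition). Solving exactly that inequality is what produces the threshold $1/(2n+\sqrt n)$ of Definition~\ref{def:almost_rev}. You instead check the condition $\eta_\infty^2<\tilde\beta$ that is actually needed, and your estimate $\sqrt{\tilde\beta}>1/(2\sqrt n+1)\ge 1/(2n+\sqrt n)$ shows the threshold has slack.

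For the random rule you genuinely depart from the paper. The paper simply inserts the in-expectation value $\beta_{\rm norm}=1/n$ into Proposition~\ref{prop:beta_transfer}. That is not justified: the proposition's hypothesis $|r_{t,i}|\ge\sqrt{\beta_{\rm norm}}\,\|r_t\|_2$ is pathwise, and its conclusion is nonlinear in $\sqrt{\beta_{\rm norm}}$. Averaging the one-step inequality over the uniform coordinate, as you do, needs no transfer at all. It gives rigorous exponential decay of $\E\,\En(y_t)$ under the much weaker requirement $\eta_\infty<1/\sqrt n$. Averaging the perturbation term as well gives the factor $1-\frac{\mu}{n}(1-\eta_2^2)$, so even $\eta_2<1$ suffices.

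Your Step~3 is right in substance but muddled, and the remark about replacing $\mu$ by $\mu/L_{\max}$ is unnecessary. The clean point is that $\bar L=1\ge 1-p_{ii}=L_i$ is a valid coordinate Lipschitz bound, so RLGL's unit step is the $1/\bar L$ step. Hence $\En(y_{t+1})\le\En(y_t)-\tfrac12|s_{t,i_t}|^2+\tfrac12|\xi_{i_t}|^2$ holds even with self-loops. This removes the no-self-loop hypothesis that Theorem~\ref{thm:rlgl_conv} imposes and the corollary silently inherits.
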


\begin{proof} 
See Appendix~\ref{app:random_greedy_proof}
\end{proof}

We want to highlight that near-reversibiltiy is a strong condition because we require the
irreversibility term to scale as $O(1/n)$. However, under this condition, we also obtain a strong result that the fraction by which the energy is reduced, is $O(1/n)$ as well, while updating only one coordinate. Intuitively, we see the perturbation as noise, and we require the noise to be smaller than the update.

When the chain is not nearly reversible, updating a single coordinate is insufficient: the method cannot guarantee a descent step. Instead, one must update a block of coordinates so that the update rule has a sufficiently large $\beta$ to tolerate the higher irreversibility. This might explain the empirical success of RLGL on irreversible chains: although a single-coordinate update may fail to decrease the energy, multiple or block updates can do so. A purely one-step analysis, which is what we have carried out, is insufficient to capture this phenomenon.

\subsection{Nearly reversible convex combinations}

We analyze the nearly reversible property (Definition~\ref{def:almost_rev}) for convex combination of reversible and irreversible chains. 
Consider
\begin{equation}
\label{eq:mixture}
P_\varepsilon = (1-\varepsilon)R + \varepsilon Q,\qquad \varepsilon\in[0,1],
\end{equation}
where $R$ is irreducible and reversible with respect to $\pi$, and $Q$ is irreversible. 
The local irreversibility constants $\kappa_i$, the Poincar\'e constant $\mu$, and the corresponding $\eta_\infty$ will be functions of $\varepsilon$.

Consider first the simple case where $Q$ has the same stationary distribution $\pi$.
Since $A(R)=0$, the anti-symmetric part satisfies $A(P_\varepsilon)=\varepsilon A(Q)$, so that
$$
\kappa(P_\varepsilon)=\varepsilon\,\kappa(Q).
$$
By Weyl's inequality (see, e.g., \cite[Thm.~4.3.1]{horn_johnson_matrix_analysis}), it follows that
$$
\mu(P_\varepsilon)\ge (1-\varepsilon)\mu(R).
$$
Hence
$$
\eta_\infty(P_\varepsilon)
=\frac{\kappa(P_\varepsilon)}{\mu(P_\varepsilon)}
\le
\frac{\varepsilon}{1-\varepsilon}\,
\frac{\kappa(Q)}{\mu(R)}.
$$
Thus, $P_\varepsilon$ is nearly reversible for $\varepsilon$ small enough.

The restriction that $R$ and $Q$ share the same stationary distribution can be relaxed.

\begin{prop}
\label{prop:pert}
Let $P_\varepsilon$ be of the form \eqref{eq:mixture}. Let $\pi(\varepsilon)$ be the stationary distribution of $P_\varepsilon$. Then there are factors
$$
c_1(\varepsilon):=\frac{(1-\varepsilon)\|R\|_2}{\pi_{\min}(\varepsilon)},\qquad
c_2(\varepsilon):=\frac{\|Q\|_2\,\pi_{\max}(\varepsilon)}{\pi_{\min}(\varepsilon)},
$$
such that
\begin{equation}\label{eq:kappa-const}
\kappa(\varepsilon)\le c_1(\varepsilon)\,\|\pi(\varepsilon)-\pi\|_\infty
+\varepsilon\,c_2(\varepsilon).
\end{equation}
In particular $\kappa(\varepsilon)=O(\varepsilon)$ as $\varepsilon\downarrow0$.
\end{prop}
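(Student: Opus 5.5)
The plan is to write the antisymmetric part of $P_\varepsilon$ explicitly, split it into a contribution from $R$ and one from $Q$, and bound each by a matrix $2$-norm. Write $\pi_\varepsilon:=\pi(\varepsilon)$ and $\Pi_\varepsilon=\diag(\pi_\varepsilon)$. Since $\Pi_\varepsilon^{1/2}P^*\Pi_\varepsilon^{-1/2}=\Pi_\varepsilon^{-1/2}P^\top\Pi_\varepsilon^{1/2}$, we have
\[
A(P_\varepsilon)=\tfrac12\bigl(\Pi_\varepsilon^{1/2}P_\varepsilon\Pi_\varepsilon^{-1/2}-\Pi_\varepsilon^{-1/2}P_\varepsilon^\top\Pi_\varepsilon^{1/2}\bigr)
=(1-\varepsilon)A_\varepsilon(R)+\varepsilon A_\varepsilon(Q),
\]
where $A_\varepsilon(M):=\tfrac12(\Pi_\varepsilon^{1/2}M\Pi_\varepsilon^{-1/2}-\Pi_\varepsilon^{-1/2}M^\top\Pi_\varepsilon^{1/2})$ is the antisymmetric part of $M$ taken with respect to $\pi_\varepsilon$. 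Since $\kappa(\varepsilon)=\max_i\|A(P_\varepsilon)_{i,:}\|_2\le\|A(P_\varepsilon)\|_2$, it is enough to bound the operator norms of the two pieces.

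\textbf{The $Q$-term.} Bound it crudely: $\|A_\varepsilon(Q)\|_2\le\|\Pi_\varepsilon^{1/2}\|_2\|Q\|_2\|\Pi_\varepsilon^{-1/2}\|_2=\|Q\|_2\sqrt{\pi_{\max}/\pi_{\min}}$. Because $\pi_{\max}\le1$, this is at most $\|Q\|_2\,\pi_{\max}/\pi_{\min}=c_2(\varepsilon)$. It contributes $\varepsilon c_2(\varepsilon)$.

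\textbf{The $R$-term.} Here we use reversibility of $R$ with respect to $\pi$, i.e.\ $R^\top=\Pi R\Pi^{-1}$. Substituting,
\[
\Pi_\varepsilon^{-1/2}R^\top\Pi_\varepsilon^{1/2}=\Pi_\varepsilon^{-1/2}\Pi R\Pi^{-1}\Pi_\varepsilon^{1/2}.
\]
Set $D:=\Pi_\varepsilon^{-1/2}\Pi^{1/2}$, a diagonal matrix with entries $d_i=\sqrt{\pi_i/\pi_{\varepsilon,i}}$. Then
\[
2A_\varepsilon(R)=\Pi_\varepsilon^{1/2}R\Pi_\varepsilon^{-1/2}-D^2\,\Pi_\varepsilon^{1/2}R\Pi_\varepsilon^{-1/2}\,D^{-2},
\]
so it vanishes exactly when $D$ is a scalar multiple of $I$, in particular when $\pi_\varepsilon=\pi$. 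Write $K:=\Pi_\varepsilon^{1/2}R\Pi_\varepsilon^{-1/2}$ and use the telescoping identity
\[
K-D^2KD^{-2}=(I-D^2)K+D^2K(I-D^{-2}).
\]
It then suffices to bound $\|I-D^2\|_2$ and $\|I-D^{-2}\|_2$, which are diagonal with entries $(\pi_{\varepsilon,i}-\pi_i)/\pi_{\varepsilon,i}$ and $(\pi_i-\pi_{\varepsilon,i})/\pi_i$. The first is at most $\|\pi_\varepsilon-\pi\|_\infty/\pi_{\min}(\varepsilon)$.

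\textbf{Main obstacle.} The difficulty is keeping the constant in the exact form $c_1(\varepsilon)=(1-\varepsilon)\|R\|_2/\pi_{\min}(\varepsilon)$. The factors $\|K\|_2\le\|R\|_2\sqrt{\pi_{\max}/\pi_{\min}}$, $\|D^2\|_2$ and $\|I-D^{-2}\|_2$ bring in $\pi_{\max}(\varepsilon)$, $\pi_{\min}(\varepsilon)$ and $\pi_{\min}$, and these must either be absorbed into the constant or shown to cancel. If the factorisation above does not reduce to $c_1(\varepsilon)$ directly, I would abandon the matrix-norm route and bound row norms entrywise. Detailed balance for $R$ gives
\[
\pi_{\varepsilon,j}r_{ji}-\pi_{\varepsilon,i}r_{ij}=(\pi_{\varepsilon,j}-\pi_j)r_{ji}-(\pi_{\varepsilon,i}-\pi_i)r_{ij}.
\]
Plugging this into formula \eqref{eq:kappa_i} makes each current linear in $\|\pi_\varepsilon-\pi\|_\infty$. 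The $1/\sqrt{\pi_{\varepsilon,i}\pi_{\varepsilon,j}}$ weights are at most $1/\pi_{\min}(\varepsilon)$, and the column and row sums of $R$ can be controlled through $\|R\|_2$. I would accept an extra absolute constant factor of order $2$ here if necessary.

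\textbf{The $O(\varepsilon)$ conclusion.} It remains to show $\|\pi_\varepsilon-\pi\|_\infty=O(\varepsilon)$ and that $\pi_{\min}(\varepsilon)$ stays bounded away from zero. I would use the standard perturbation bound via the group inverse (fundamental matrix) $Z$ of $R$: $\pi_\varepsilon-\pi=\varepsilon\,\pi_\varepsilon(Q-R)Z$, which gives $\|\pi_\varepsilon-\pi\|_\infty\le\varepsilon\|Q-R\|\,\|Z\|$. Consequently $\pi_{\min}(\varepsilon)\to\pi_{\min}>0$ by irreducibility of $R$, and $\pi_{\max}(\varepsilon)$ is bounded, so both $c_1$ and $c_2$ stay bounded as $\varepsilon\downarrow0$. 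Combining the two terms yields $\kappa(\varepsilon)=O(\varepsilon)$.
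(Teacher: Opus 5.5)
Your $Q$-term and your $O(\varepsilon)$ argument are sound. Your $Q$-bound $\|Q\|_2\sqrt{\pi_{\max}(\varepsilon)/\pi_{\min}(\varepsilon)}$ is actually sharper than the paper's. The reason it is at most $c_2(\varepsilon)$ is that $\pi_{\max}/\pi_{\min}\ge 1$, not that $\pi_{\max}\le 1$. The exact identity $\pi_\varepsilon-\pi=\varepsilon\,\pi_\varepsilon(Q-R)Z$, combined with $\|\pi_\varepsilon\|_1=1$, is a cleaner substitute for the Schweitzer bound the paper uses, and it needs no smallness condition on $\varepsilon$.

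The gap is in the $R$-term: as written, you do not obtain the stated constant $c_1(\varepsilon)$. Splitting the norms as $\|I-D^2\|_2\|K\|_2$ and $\|D^2\|_2\|K\|_2\|I-D^{-2}\|_2$ introduces spurious factors $\sqrt{\pi_{\max}(\varepsilon)/\pi_{\min}(\varepsilon)}$, $\max_i \pi_i/\pi_{\varepsilon,i}$ and $1/\pi_{\min}$. Your fallback of tolerating an extra factor of order $2$ would prove a weaker inequality than the one claimed.

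The missing step is to recombine the commuting diagonal factors before taking norms. Set $\Delta:=\Pi_\varepsilon-\Pi$. Then $I-D^2=\Pi_\varepsilon^{-1}\Delta$ and $I-D^{-2}=-\Pi^{-1}\Delta$, and using $\Pi R\Pi^{-1}=R^\top$ your two telescoped terms become
\[
(I-D^2)K=\Pi_\varepsilon^{-1/2}\,\Delta R\,\Pi_\varepsilon^{-1/2},\qquad
D^2K(I-D^{-2})=-\,\Pi_\varepsilon^{-1/2}\,R^\top\Delta\,\Pi_\varepsilon^{-1/2}.
\]
Hence $2A_\varepsilon(R)=\Pi_\varepsilon^{-1/2}(\Delta R-R^\top\Delta)\Pi_\varepsilon^{-1/2}$. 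This is exactly the paper's argument. The paper writes $A(P_\varepsilon)=\tfrac12\Pi_\varepsilon^{-1/2}(\Pi_\varepsilon P_\varepsilon-P_\varepsilon^\top\Pi_\varepsilon)\Pi_\varepsilon^{-1/2}$ from the start and applies detailed balance $\Pi R=R^\top\Pi$ inside the sandwich.

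From there, three facts finish the bound:
\begin{itemize}
\item $\|\Pi_\varepsilon^{-1/2}\|_2^2=1/\pi_{\min}(\varepsilon)$;
\item $\|\Delta\|_2=\|\pi_\varepsilon-\pi\|_\infty$;
\item $\|\Delta R-R^\top\Delta\|_2\le 2\|R\|_2\|\Delta\|_2$.
\end{itemize}
Together they give $(1-\varepsilon)\|A_\varepsilon(R)\|_2\le c_1(\varepsilon)\|\pi_\varepsilon-\pi\|_\infty$ with no loss, since the factor $\tfrac12$ in $A$ cancels the $2$.

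Your entrywise fallback is the same computation done coordinatewise. It would also give $c_1(\varepsilon)$ exactly, provided you control the $\ell_2$ norms of the rows and columns of $R$ (each at most $\|R\|_2$) rather than its row and column sums.
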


\begin{proof}
Set
$$
M(\varepsilon):=\Pi(\varepsilon)P_\varepsilon-P_\varepsilon^{\!\top}\Pi(\varepsilon),
\qquad A(\varepsilon)=\tfrac12\,\Pi(\varepsilon)^{-1/2}M(\varepsilon)\Pi(\varepsilon)^{-1/2}.
$$
Since $\| \Pi(\varepsilon)^{-1/2}\|_2=1/\sqrt{\pi_{\min}(\varepsilon)}$ we have
$$
\kappa(\varepsilon)\le\|A(\varepsilon)\|_2
\le\frac{\|M(\varepsilon)\|_2}{2\pi_{\min}(\varepsilon)}.
$$
Using \eqref{eq:mixture} and the reversibility relation
$\Pi R-R^\top\Pi=0$, we expand
$$
\begin{aligned}
M(\varepsilon)
&=(1-\varepsilon)\big(\Pi(\varepsilon)R-R^\top\Pi(\varepsilon)\big)
+\varepsilon\big(\Pi(\varepsilon)Q-Q^\top\Pi(\varepsilon)\big)\\
&=(1-\varepsilon)\big(\Delta(\varepsilon)R-R^\top\Delta(\varepsilon)\big)
+\varepsilon\big(\Pi(\varepsilon)Q-Q^\top\Pi(\varepsilon)\big),
\end{aligned}
$$
where $\Delta(\varepsilon)=\Pi(\varepsilon)-\Pi$ is diagonal. Taking operator norms and using
$\|\Delta(\varepsilon)\|_2=\|\pi(\varepsilon)-\pi\|_\infty$ and $\|\Pi(\varepsilon)\|_2=\pi_{\max}(\varepsilon)$ gives
$$
\|M(\varepsilon)\|_2
\le 2(1-\varepsilon)\|R\|_2\,\|\pi(\varepsilon)-\pi\|_\infty
+2\varepsilon\,\|Q\|_2\,\pi_{\max}(\varepsilon).
$$
Combining with the previous inequality yields
$$
\kappa(\varepsilon)
\le \frac{(1-\varepsilon)\|R\|_2}{\pi_{\min}(\varepsilon)}\|\pi(\varepsilon)-\pi\|_\infty
+\frac{\varepsilon\|Q\|_2\,\pi_{\max}(\varepsilon)}{\pi_{\min}(\varepsilon)},
$$
which is exactly \eqref{eq:kappa-const} with the factors $c_1(\varepsilon), c_2(\varepsilon)$ defined above.

To obtain the $O(\varepsilon)$ claim we use the Schweitzer updating formula \cite{schweitzer1968perturbation}
$$
\|\pi(\varepsilon)-\pi\|_\infty
\le \frac{\varepsilon\,\|(Q-R)D\|_\infty}{1-\varepsilon\,\|(Q-R)D\|_\infty} \quad \text{ whenever } \quad \varepsilon \Vert (Q-R)D\Vert_\infty < 1,
$$
where $D$ is the deviation matrix, given by 
$$
D = (I -R + \1^\top \pi)^{-1} - \1^\top \pi.
$$
Substituting this into \eqref{eq:kappa-const} shows both terms on the right-hand side are $O(\varepsilon)$ as $\varepsilon\downarrow0$, hence $\kappa(\varepsilon)=O(\varepsilon)$.
\end{proof}

As an example, let $Q$ be the transition matrix of the directed cycle on $n$ vertices, and let $R=\tfrac{1}{n}\mathbf{1}^\top\mathbf{1}$. Both $Q$ and $R$ have the uniform stationary distribution $\pi=\tfrac{1}{n}\mathbf{1}$. Consider $P_\varepsilon$ as in \eqref{eq:mixture}.
The antisymmetric part is
\[
A(P_\varepsilon)=\tfrac{1}{2}(P_\varepsilon - P_\varepsilon^\top)
             = \tfrac{\varepsilon}{2}\,(Q - Q^\top),
\]
therefore the local irreversibility coefficients are
\[
\kappa(P_\varepsilon)=\max_i \|A_{i,:}\|_2 = \frac{\varepsilon}{\sqrt{2}}.
\]
The symmetric part is
\[
S(P_\varepsilon)
  = \tfrac{1}{2}(P_\varepsilon + P_\varepsilon^\top)
  = (1-\varepsilon)R + \varepsilon\,\tfrac{Q+Q^\top}{2}.
\]
Its eigenvalues are
\[
\lambda_0(S)=1,\qquad
\lambda_k(S)=\varepsilon \cos\!\frac{2\pi k}{n},\quad k=1,\dots,n-1,
\]
so the Poincaré constant of $S$ is
\[
\mu=1-\varepsilon\cos\!\frac{2\pi}{n}.
\]
Thus the irreversibility ratio is
\[
\eta_\infty
 = \frac{\varepsilon/\sqrt{2}}{\,1-\varepsilon\cos(2\pi/n)\,}.
\]
The threshold $\varepsilon^*(n)$ at which the chain is nearly reversible, i.e. $\eta_\infty \leq 1/(2n+\sqrt{n})$, is
$$
\varepsilon^\ast(n)
  = \frac{1}{(\sqrt{2}\,n + \sqrt{n/2})(1+\cos(2\pi/n)}.
$$
For $n \geq 4$, $\varepsilon^\ast(n) \leq 1/(\sqrt{2}\,n)$, in particular $\varepsilon^\ast(n) = O(1/n)$.
\medskip

This example has direct implications for PageRank: the PageRank transition matrix is precisely of the form~\eqref{eq:mixture}, where the reversible component is the teleportation distribution. Proposition~\ref{prop:pert} shows that by choosing a sufficiently large teleportation factor, one can always ensure that the resulting PageRank chain becomes nearly reversible. The directed-cycle example illustrates a worst-case scenario: in practice, we expect that for many real-world networks the amount of teleportation required to achieve near-reversibility is substantially smaller than what is needed for directed cycles of the same size.

\section{Heuristics and Numerical Experiments}
\label{sec:numerical}
Inspired by the equivalence of RLGL to coordinate descent in the reversible case, we come up with new heuristics. We show that they perform well even in the irreversible case, when we employ them to compute the stationary distribution and PageRank of real world and syntetic networks.

\subsection{Gauss--Southwell--Dirichlet (GSD) heuristics}

The insights of Theorem~\ref{thm:equiv} suggest the following new class of heuristics. Indeed, intuitively, one should prioritize coordinates updates that yield the largest one-step decrease of the Dirichlet energy in \eqref{eq:en_dec}. This suggests heuristics that choose the updated coordinates based on the residual rescaled by $\sqrt{\pi_i}$ rather than the actual residual as was done in \cite{RLGL}, or rescaled by $\pi_i$ as proposed in \cite{mcsherry_uniform_2005}. 

Specifically, assume that $P$ has no self-loops, so that any singleton $\{i\}$ is an independent set. 
By Theorem \ref{thm:equiv}, the one-step decrease in the symmetrized coordinates from updating coordinate $i$ with the optimal step is
\begin{equation}
    \mathcal{E}(y_t) - \mathcal{E}(y_{t+1})
    = \tfrac{1}{2}\big|(y_t\mathcal{L})_i\big|^2.
\label{eq:en_decrease}
\end{equation}
Hence the greedy (Gauss--Southwell) rule in the $y$--coordinates is
\[
i_t = \argmax_{i} \, |(y_t\mathcal{L})_i|.
\]
Translating back to the original coordinates $x$, this becomes
\[
i_t = \argmax_{i} \, \big| \big(x_t(I-P)\Pi^{-1/2}\big)_i \big|,
\]
i.e., one should apply Gauss--Southwell to the residuals \emph{rescaled} by $\sqrt{\pi_i}$. We call this the \emph{Gauss--Southwell--Dirichlet} (\texttt{GSD}) rule.

The stationary distribution $\pi$ is not known apriori, so in practice one must use its proxies $\hat\pi$. A natural choice is to use the current iterate $x_t$. Note that the proxy does not need to be normalized.

\medskip

To exploit parallel hardware, one wants to update many coordinates at each iteration.
Theorem~\ref{thm:equiv} suggests updating an \emph{independent set} that carries large total rescaled residue.  
A simple local scheme that produces such a set is the following:
\begin{enumerate}
  \item each node computes its rescaled residue.
  \item node $i$ is \emph{updated} if it holds the largest rescaled residue in its neighborhood.
\end{enumerate}

Due to the locality, this heuristic is distributed, and yields an independent set $B_t$ that contains the coordinate with maximum rescaled residuals; hence it preserves the descent property from Theorem \ref{thm:equiv}.

The heuristics above are robust to multiplicative approximation errors in the rescaling factor. The following lemma quantifies this statement.

\begin{lemma}
\label{lem:robustness}
Let $\pi$ be the true stationary distribution and let $\hat\pi$ be a proxy satisfying
$$
c^{-1}\,\pi_i \;\le\; \hat\pi_i \;\le\; c\,\pi_i
\qquad\text{for all }i \in [n],
$$
for some $c\geq 1$.  Denote by $\tilde r_i = |(x_t(I-P)\Pi^{-1/2})_i|$ the true rescaled residuals and by $\hat r_i = |(x_t(I-P)\widehat\Pi^{-1/2})_i|$ the residuals rescaled using the proxy. Then
$$
c^{-1/2}\, \tilde r_i \;\le\; \hat r_i \;\le\; c^{1/2}\, \tilde r_i
\qquad\text{for all }i \in [n].
$$
In particular, the coordinate selected by the proxy rule satisfies
$$
\max_i\hat r_i \;\ge\; c^{-1/2} \max_i \tilde r_i,
$$
and the corresponding one-step energy decrease is at least $c^{-1}$ times the optimal single-coordinate decrease.
\end{lemma}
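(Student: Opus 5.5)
The argument is a coordinatewise computation followed by two short deductions. Write $\rho_i := (x_t(I-P))_i$ for the raw residual at coordinate $i$. Since $\Pi^{-1/2}$ and $\widehat\Pi^{-1/2}$ are diagonal, $\tilde r_i = |\rho_i|\,\pi_i^{-1/2}$ and $\hat r_i = |\rho_i|\,\hat\pi_i^{-1/2}$. Hence
$$
\hat r_i = \tilde r_i \left(\frac{\pi_i}{\hat\pi_i}\right)^{1/2}.
$$
The hypothesis $c^{-1}\pi_i \le \hat\pi_i \le c\,\pi_i$ gives $c^{-1} \le \pi_i/\hat\pi_i \le c$. Taking square roots, which is monotone on $[0,\infty)$, yields the two-sided bound $c^{-1/2}\tilde r_i \le \hat r_i \le c^{1/2}\tilde r_i$. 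This holds for every $i$, including the degenerate case $\rho_i=0$.

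For the statement about the selected coordinate, let $j^\star = \argmax_i \tilde r_i$ and let $\hat\imath = \argmax_i \hat r_i$. The lower bound at $j^\star$ gives
$$
\max_i \hat r_i \ge \hat r_{j^\star} \ge c^{-1/2}\tilde r_{j^\star} = c^{-1/2}\max_i \tilde r_i.
$$
The energy statement needs one more step, since the energy decrease depends on the true rescaled residual at $\hat\imath$, not on the proxy one. We chain the upper and lower bounds:
$$
\tilde r_{\hat\imath} \ge c^{-1/2}\hat r_{\hat\imath} \ge c^{-1/2}\hat r_{j^\star} \ge c^{-1}\tilde r_{j^\star}.
$$
This argument uses the paper's standing convention for the greedy rules of this section, namely that $P$ has no self-loops, so every singleton is an independent set. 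Under that convention, \eqref{eq:en_decrease} gives the exact one-step decrease $\tfrac12 \tilde r_i^2$ when coordinate $i$ is updated, because $(y_t\L)_i$ equals the rescaled residual up to sign. The optimal single-coordinate decrease is therefore $\tfrac12\tilde r_{j^\star}^2$. The proxy rule achieves $\tfrac12\tilde r_{\hat\imath}^2 \ge \tfrac12 c^{-2}\tilde r_{j^\star}^2$.

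The main obstacle is the energy constant. The chain above yields only a factor $c^{-2}$, whereas the lemma claims $c^{-1}$. The factor $c^{-1}$ does come out directly if one compares $\tfrac12\hat r_{\hat\imath}^2$ with the optimum, or equivalently if one measures the decrease through the squared proxy residual: $\hat r_{\hat\imath}^2 \ge c^{-1}\tilde r_{j^\star}^2$ follows immediately from the preceding display. I would therefore first check how the statement intends the "corresponding one-step energy decrease" to be measured. If the decrease must be measured in the true energy, the bound I can prove is the factor $c^{-2}$, and I would state the claim with that constant rather than $c^{-1}$.
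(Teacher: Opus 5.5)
Your proof of the two-sided bound and of $\max_i\hat r_i\ge c^{-1/2}\max_i\tilde r_i$ is the paper's argument: $\widehat\Pi^{-1/2}\Pi^{1/2}$ is diagonal with entries in $[c^{-1/2},c^{1/2}]$, and the rest is monotonicity.

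On the energy claim your doubt is justified. The problem lies in the statement, not in your argument.

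The paper's entire justification is that squaring $\max_i\hat r_i\ge c^{-1/2}\max_i\tilde r_i$ gives the factor $c^{-1}$. But $(\max_i\hat r_i)^2=\hat r_{\hat\imath}^{\,2}$ is only the proxy score of the selected coordinate. The decrease actually realized is, by \eqref{eq:en_decrease}, $\tfrac12\tilde r_{\hat\imath}^{\,2}$. This formula holds in the reversible, loop-free setting of Theorem~\ref{thm:equiv}, where the rescaled residual coincides with $y_t\mathcal{L}$, so you should assume reversibility explicitly and not only the absence of self-loops. Converting $\hat r_{\hat\imath}$ back to $\tilde r_{\hat\imath}$ costs the second factor $c^{-1/2}$ in your chain.

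The resulting $c^{-2}$ is sharp. For $c>1$ take:
\begin{itemize}
\item $\tilde r_1=1$ and $\tilde r_2=a$ with $c^{-1}<a<c^{-1/2}$;
\item all other rescaled residuals negligible (for instance $\pi$ uniform and $n$ large, since the raw residual only has to sum to zero);
\item $\hat\pi_1=c\,\pi_1$, $\hat\pi_2=c^{-1}\pi_2$, and $\hat\pi_i=\pi_i$ otherwise.
\end{itemize}
Then $\hat r_2=c^{1/2}a>c^{-1/2}=\hat r_1$, so the proxy rule picks coordinate $2$. The realized decrease is $a^2<c^{-1}$ times the optimum, and it approaches $c^{-2}$ as $a\downarrow c^{-1}$.

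So the last sentence of the lemma is false as an assertion about the true Dirichlet energy. Your version with $c^{-2}$ is the correct one, and with that constant your proof is complete. The reading via the squared proxy score does give $c^{-1}$, but it is not the decrease of any energy, so I would not offer it as an alternative interpretation.
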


\begin{proof}
Since $\widehat\Pi^{-1/2}\Pi^{1/2}$ is diagonal with entries in $[c^{-1/2},c^{1/2}]$, the claimed multiplicative bounds on $\hat r_i$ follow immediately. Squaring the scores yields the $c^{-1}$ factor for the energy decrease.
\end{proof}

Thus a fixed multiplicative proxy degrades the greedy decrease by at most a factor $c^{-1}$ that depends only on the quality of the proxy. 

\medskip 
Finally, we quantify the costs as the cumulative number of edges used,  divided by the number of edges in the graph: one full power iteration has a normalized cost of $1$, while an iteration where a single node $i$ is updated has a normalized cost of $d^+_i/|E|$. Therefore, it is natural to include some weight that takes into account the cost of updating a node, which is its out degree $d^+_i$, and  choose the node $i$ that maximizes the decrease in energy \eqref{eq:en_decrease} per cost. We call this heuristic \texttt{GSD-deg} and its local version \texttt{LocalGSD-deg}. 

Table~\ref{tab:heuristics} summarizes all the new heuristics described above. 

\begin{table}[h]
\centering
\begin{tabular}{@{}ll@{}}
\toprule
\textbf{Heuristic} & \textbf{Selection Rule} \\ \midrule
\texttt{GSD} & \( B_t = \{\, i \mid i = \argmax_{j} \; |r_{t,j}|/\sqrt{ x_{t,j}} \,\} \) \\[1ex]
\texttt{GSD-deg}     & \( B_t = \{\, i \mid i = \argmax_{j} \; |r_{t,j}|/\sqrt{d^+_jx_{t,j}} \,\} \) \\[1ex]
\texttt{LocalGSD}      & $B_t = \{\, i \mid i = \argmax_{j \in N(i)} \; |r_{t,j}|/\sqrt{ x_{t,j}}$ \\[1ex]
\texttt{LocalGSD-deg}      & \( B_t = \{\, i \mid i = \argmax_{j \in N(i)} \; |r_{t,j}|/\sqrt{d^+_j x_{t,j}} \,\} \) \\[1ex]
\bottomrule
\end{tabular}
\caption{Here $r_t = x_t(I-P)$, $x_t$ is the current estimate of the stationary distribution, $N(i)$ denotes the neighborhood of node $i$, including itself, $d_i^+$ its the out-degree.}
\label{tab:heuristics}
\end{table}

We compare our new heuristics to the baseline heuristics summarized in Table~\ref{tab:baseline-heuristics}. The \texttt{Theta} heuristic is the best performing heuristic in \cite{RLGL}. 

\begin{table}[H]
\centering
\begin{tabular}{@{}lp{11cm}@{}}
\toprule
\textbf{Baseline heuristic} & \textbf{Selection Rule} \\ \midrule

\texttt{Gauss-Southwell} &
\( B_t = \{\, i \mid i = \arg\max_j |r_{t,j}| \,\} \) \\[1ex]

\texttt{PI} &
\( B_t = [n] \) \\[1ex]

\texttt{PCash} &
$B_t = \{\, i\} \text{ with probability } \propto |r_{t,i}|$\\[1ex]

\texttt{Theta} &
$ 
B_t = \{\, i \mid i \equiv t \pmod{n} \;\text{and}\ |r_{t,i}| \ge \theta_t(r) \,\},$
\\ 
& 
 $\hspace{1.5cm} \theta_t(r) = \left( \frac{ \sum_j |r_{t_0,j}|^r }{n} \right)^{1/r},
\quad r \ge 1,\quad
t_0 = \left\lfloor \frac{t}{n} \right\rfloor
$ \\

\bottomrule
\end{tabular}
\caption{Baseline heuristics.}
\label{tab:baseline-heuristics}
\end{table}

\subsection{Numerical results}

In this section, we compare the existing and the newly proposed RLGL heuristics on web graphs, and two synthetic graphs: a stochastic block model and a scale free preferential attachment model. More details on the graphs used can be found in Table~\ref{tab:graphs}.

For each graph, we compute both the stationary distribution on their largest strongly connected component (lscc) and the PageRank \cite{brin1998anatomy}, which is the stationary distribution of $P_\varepsilon$ in \eqref{eq:mixture}, where $Q$ is a simple random walk on a (directed) graph, $R=\tfrac{1}{n}\mathbf{1}^\top\mathbf{1}$ and the damping factor is $\varepsilon=0.85$. We report the $\ell_1$ norm of the residual as a function of the \emph{normalized cost}.

\medskip

We start with comparison of the Gauss--Southwell rule with different rescalings of the residuals, on the lscc of the Harvard500 graph. As we  observe in Figure~\ref{fig:GS_rescaling}, rescaling by $\sqrt{x_{t,i}}$, which is our proposed GSD heuristic, results in the fastest convergence.

Across all the graphs we tested, our new heuristics, particularly \texttt{GSD-deg}, outperform the previously best-known methods, including the Theta heuristic of \cite{RLGL}. It is noteworthy that the local heuristics also perform consistently well, almost always surpassing the prior state of the art, despite using only local information.

\begin{figure}[H]
    \centering
    \includegraphics[width=0.5\linewidth]{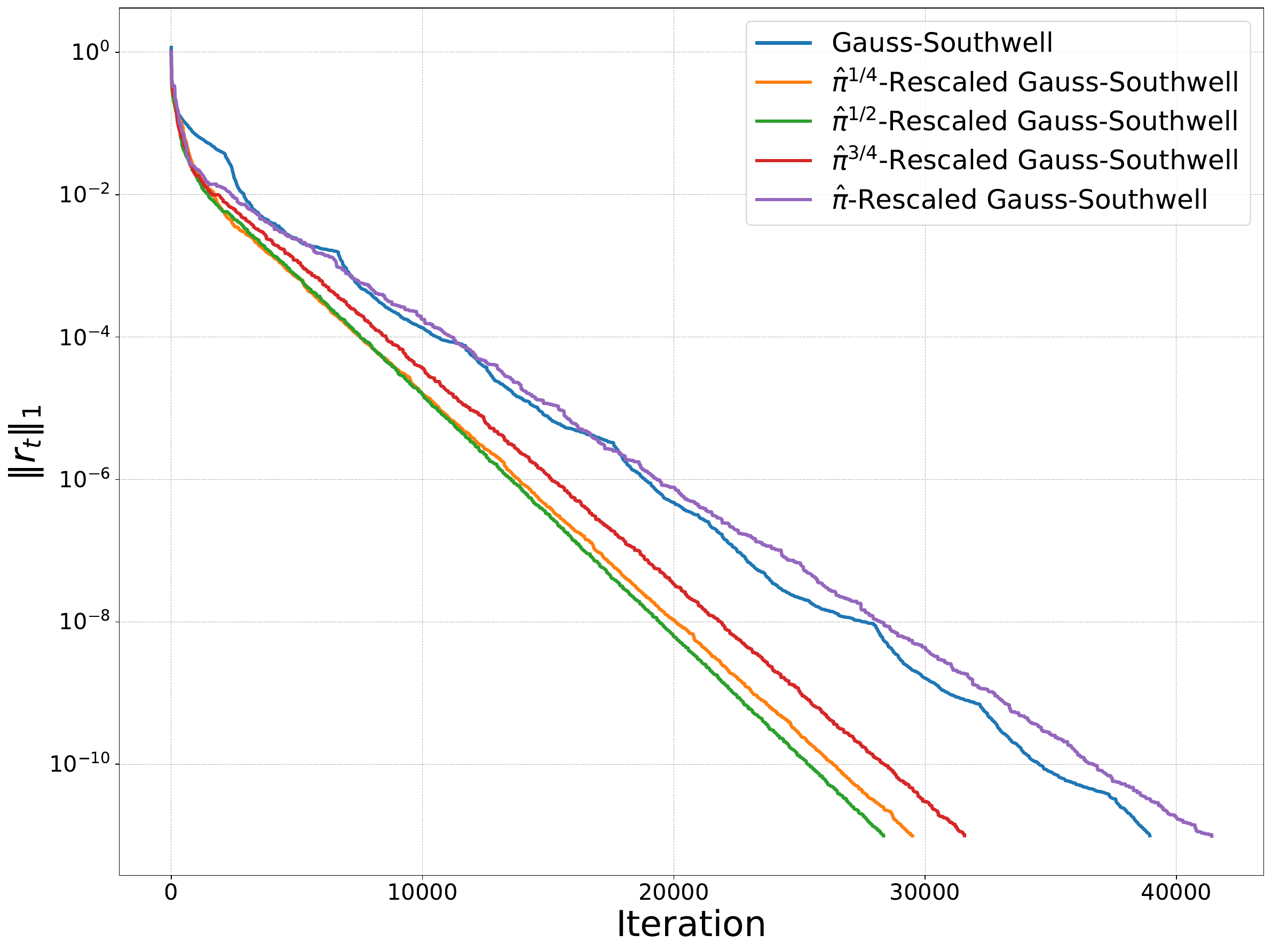}
    \caption{Stationary distribution computation of the Harvard500 lscc. Comparison of different residual rescalings for the Gauss-Southwell heuristic.  On the $y$-axis the $\ell_1$ norm of the residual, on the $x$-axis the iteration number. Rescaling by $\sqrt{\hat\pi}$ results in faster convergence.}
    \label{fig:GS_rescaling}
\end{figure}

\begin{figure}[H]
\centering
\begin{minipage}{0.48\linewidth}
    \centering
    \includegraphics[width=\linewidth]{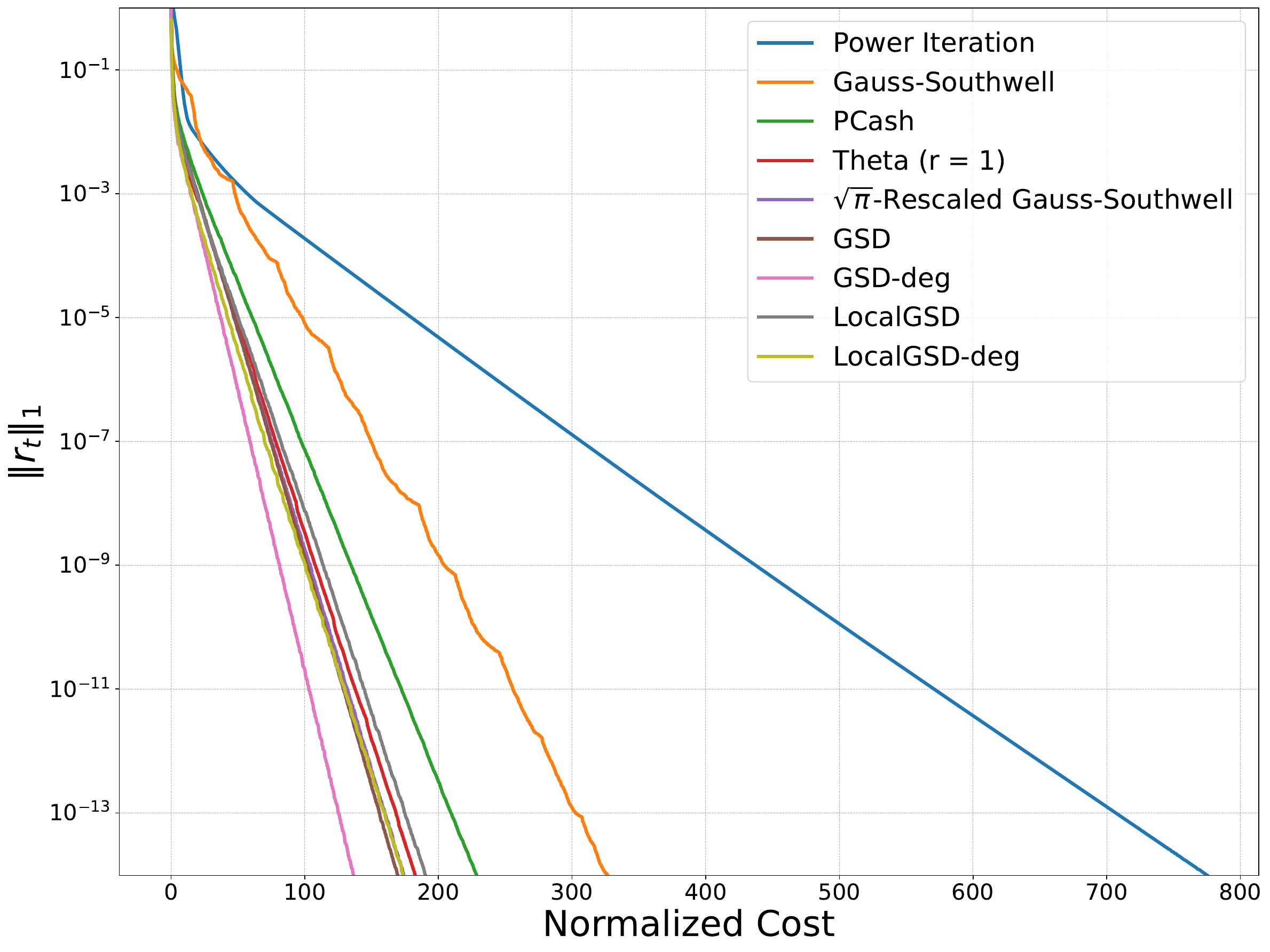}
    \caption{Stationary distribution on Harvard500 lscc.}
    \label{fig:harvard500_stationary}
\end{minipage}\hfill
\begin{minipage}{0.48\linewidth}
    \centering
    \includegraphics[width=\linewidth]{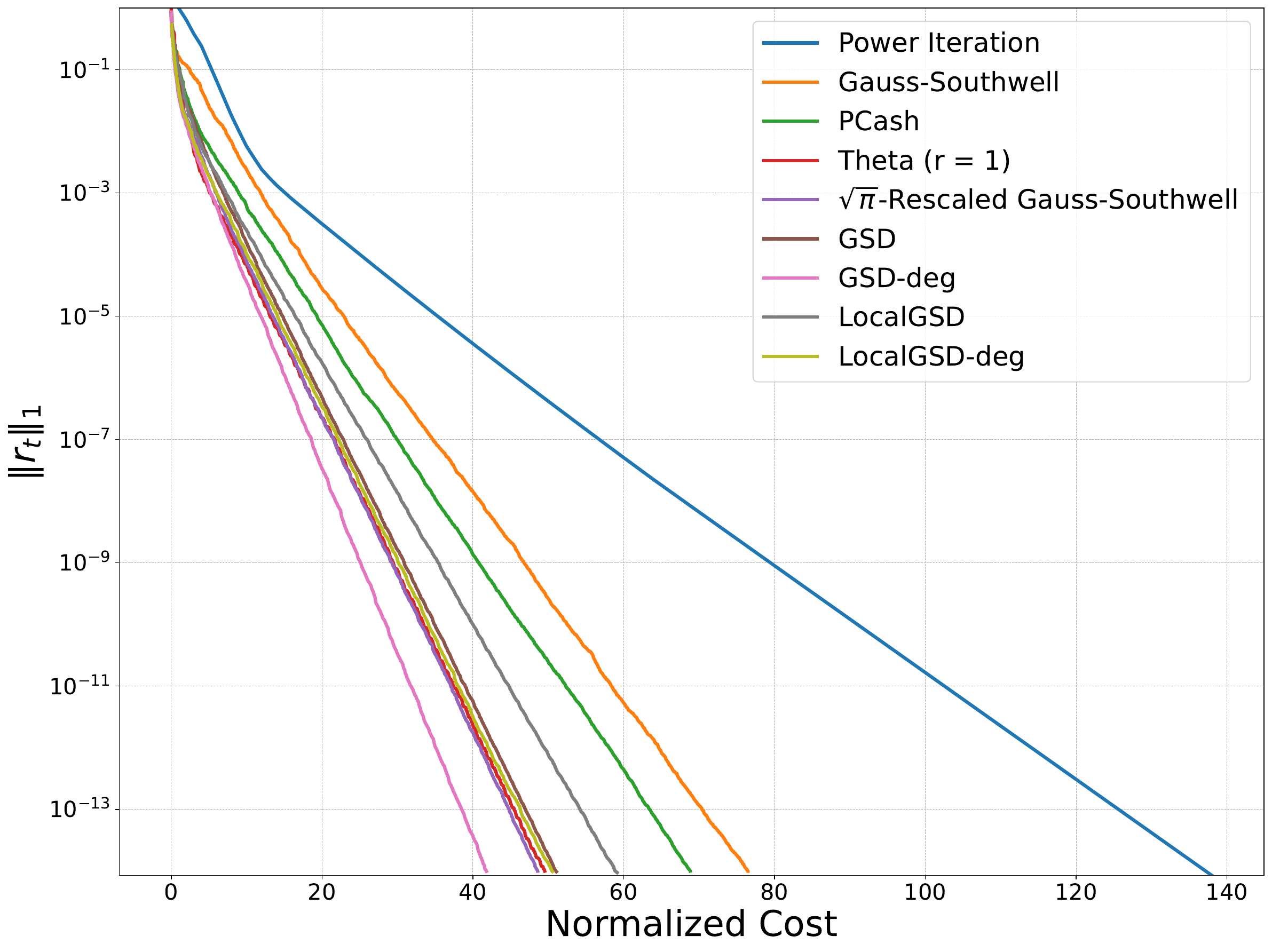}
    \caption{PageRank on Harvard500 lscc.}
    \label{fig:harvard500_pagerank}
\end{minipage}
\end{figure}

\begin{figure}[H]
\centering
\begin{minipage}{0.48\linewidth}
    \centering
    \includegraphics[width=\linewidth]{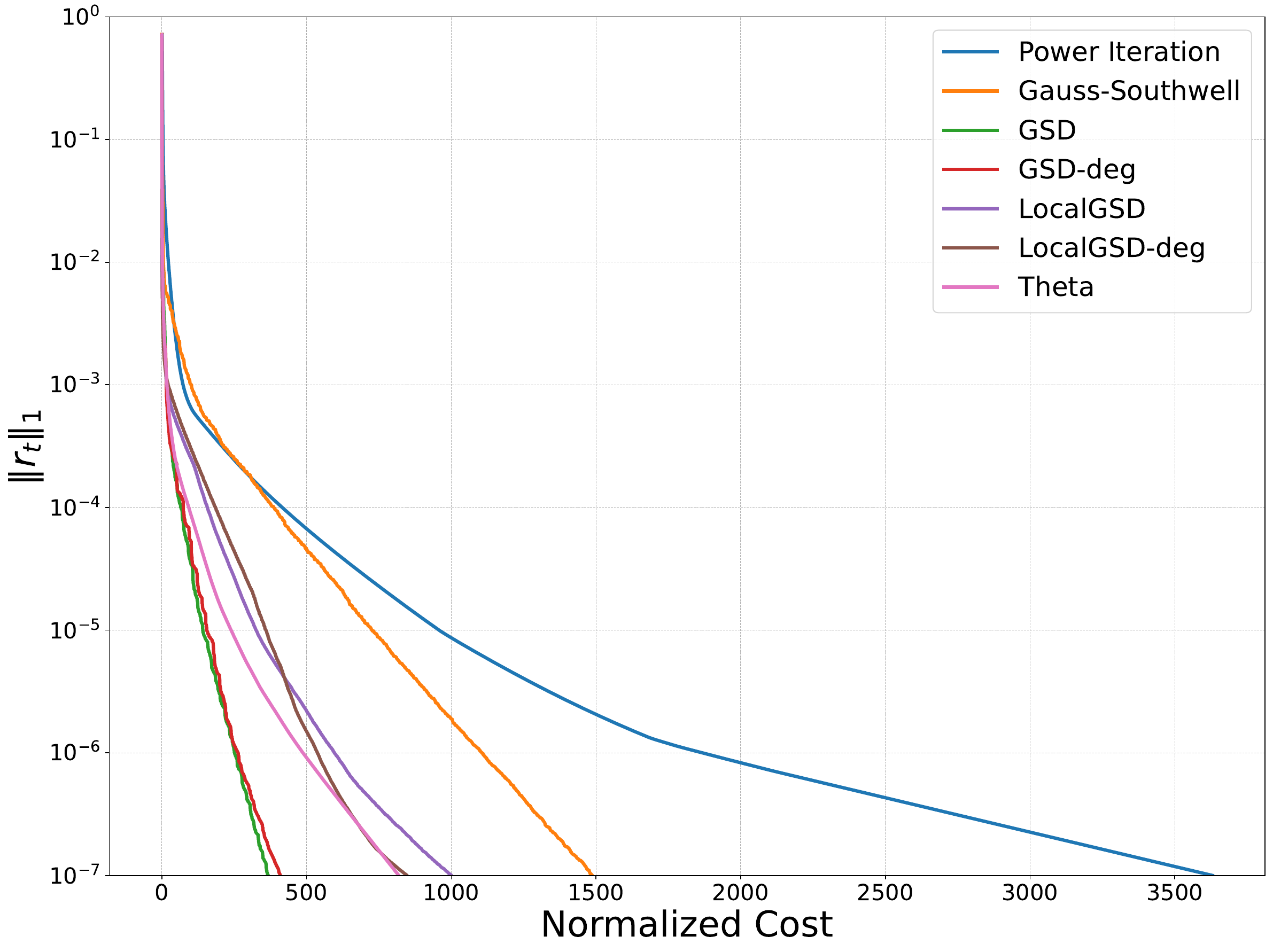}
    \caption{Stationary distribution computation on web-edu.}
    \label{fig:webedu_stationary}
\end{minipage}\hfill
\begin{minipage}{0.48\linewidth}
    \centering
    \includegraphics[width=\linewidth]{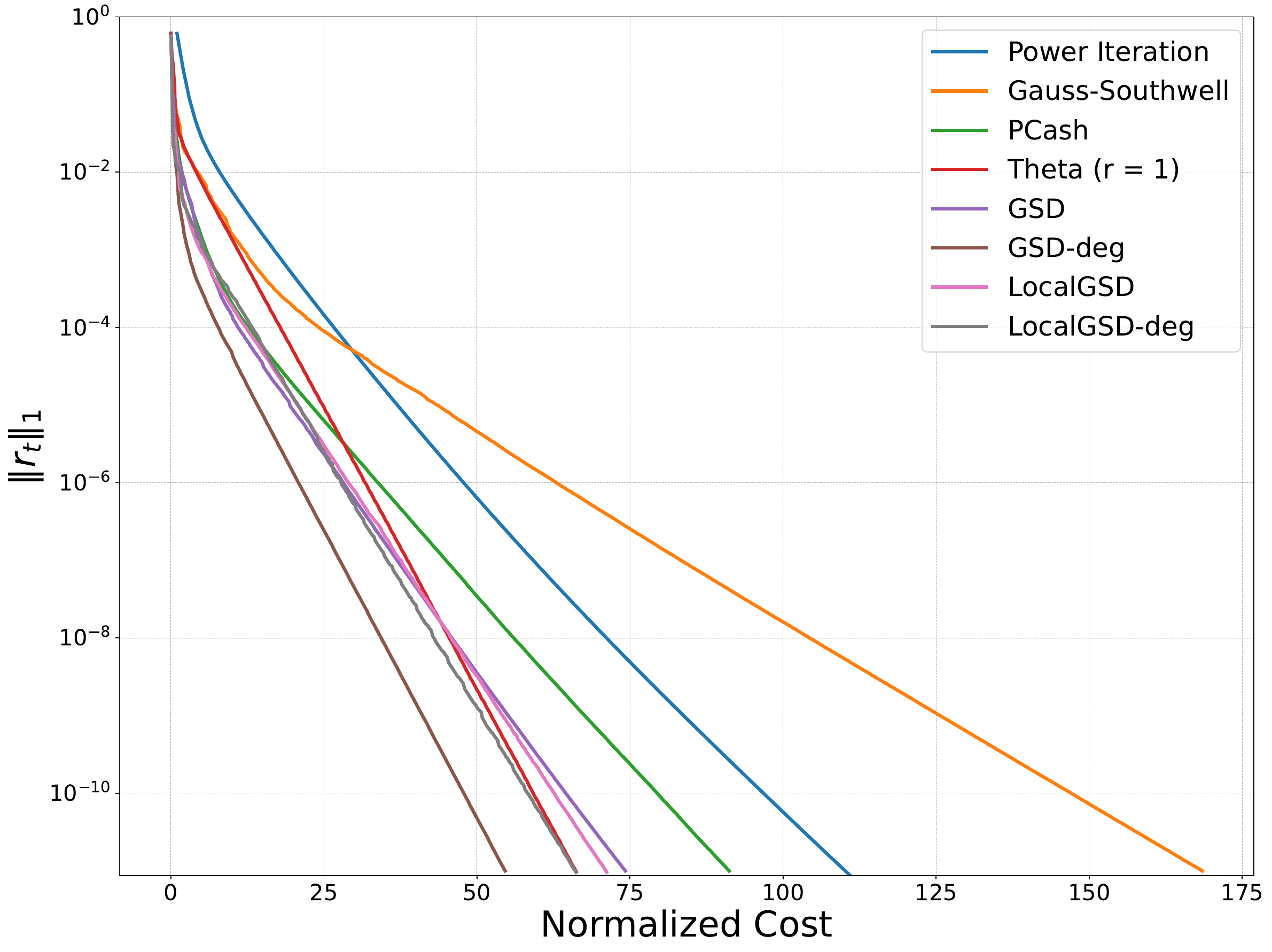}
    \caption{PageRank computation on web-edu.}
    \label{fig:webedu_pagerank}
\end{minipage}
\end{figure}

\begin{figure}[H]
\centering
\begin{minipage}{0.48\linewidth}
    \centering
    \includegraphics[width=\linewidth]{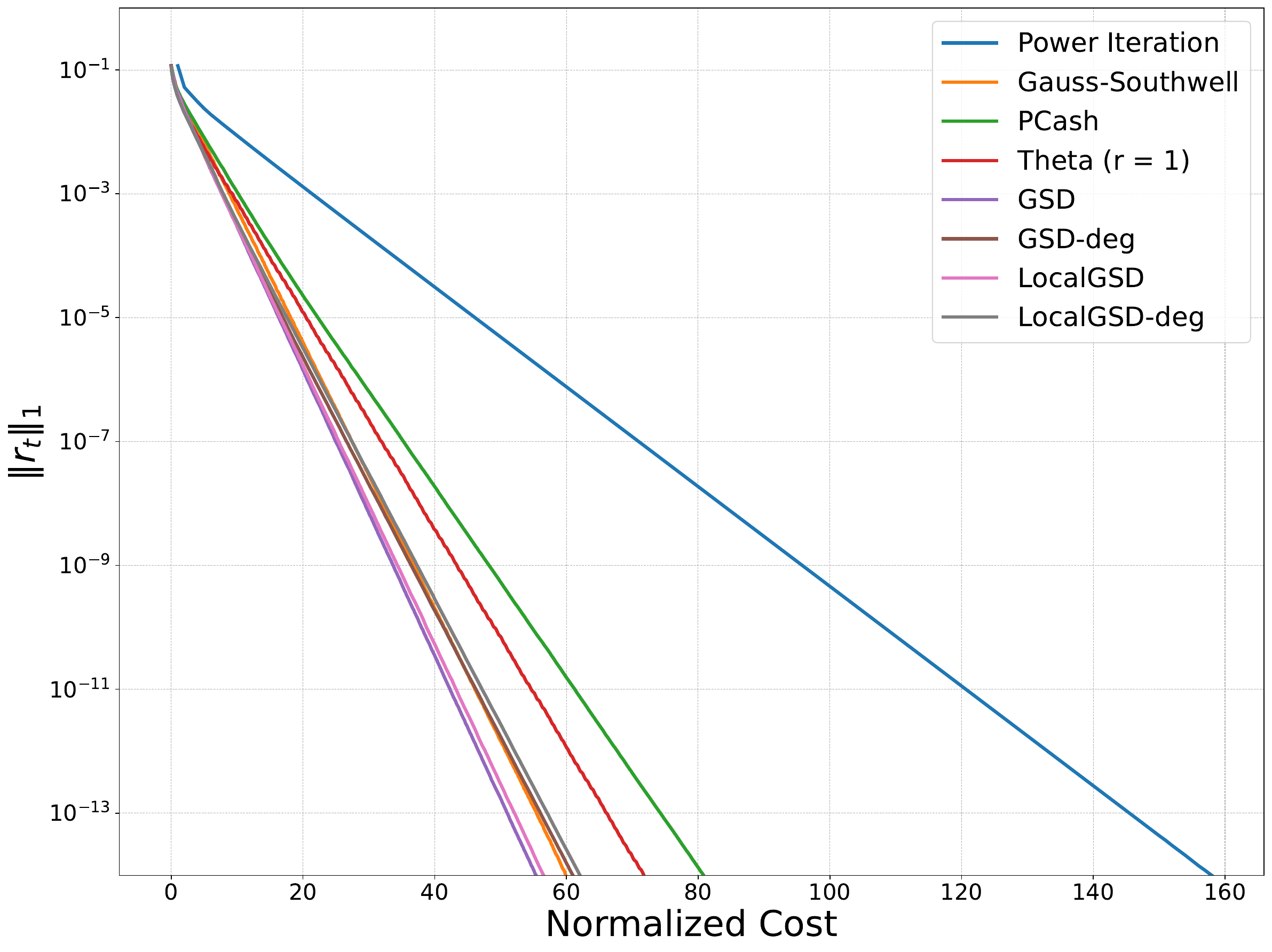}
    \caption{Stationary distribution computation SBM.}
    \label{fig:SBM_stationary}
\end{minipage}\hfill
\begin{minipage}{0.48\linewidth}
    \centering
    \includegraphics[width=\linewidth]{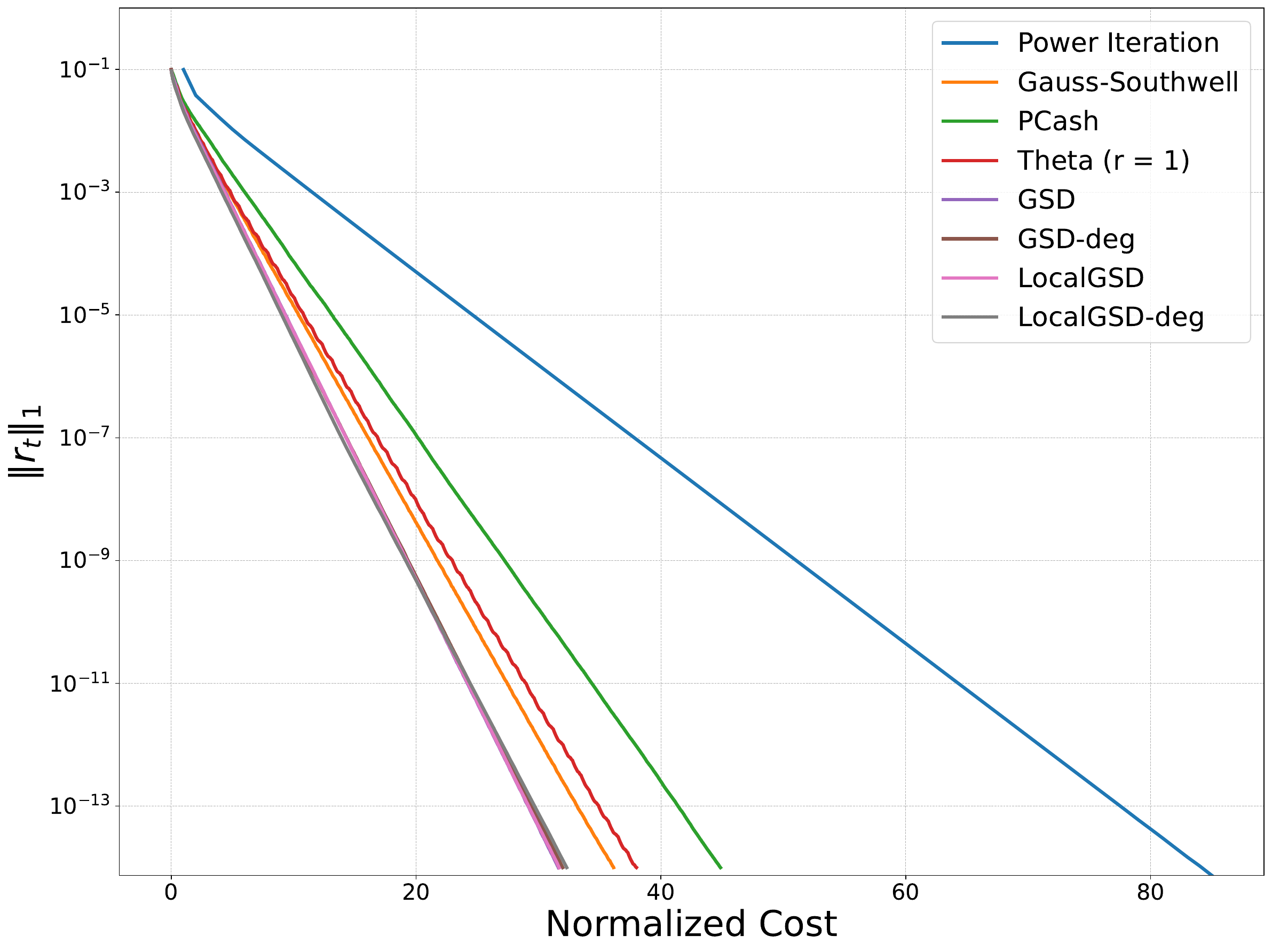}
    \caption{PageRank computation on SBM.}
    \label{fig:SBM_pagerank}
\end{minipage}
\end{figure}

\begin{figure}[H]
\centering
\begin{minipage}{0.48\linewidth}
    \centering
    \includegraphics[width=\linewidth]{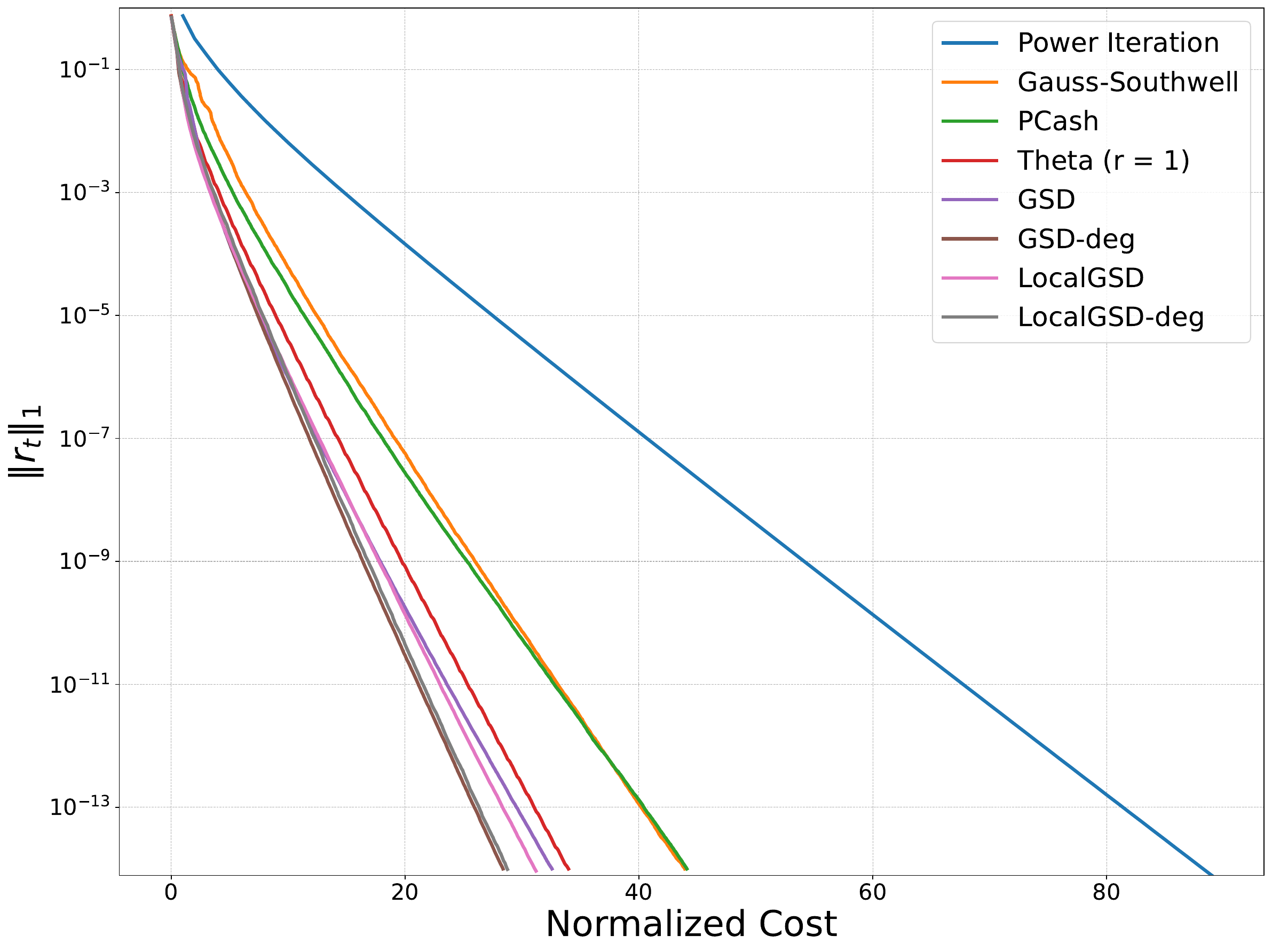}
    \caption{Stationary distribution computation on scale free.}
    \label{fig:scalefree_stationary}
\end{minipage}\hfill
\begin{minipage}{0.48\linewidth}
    \centering
    \includegraphics[width=\linewidth]{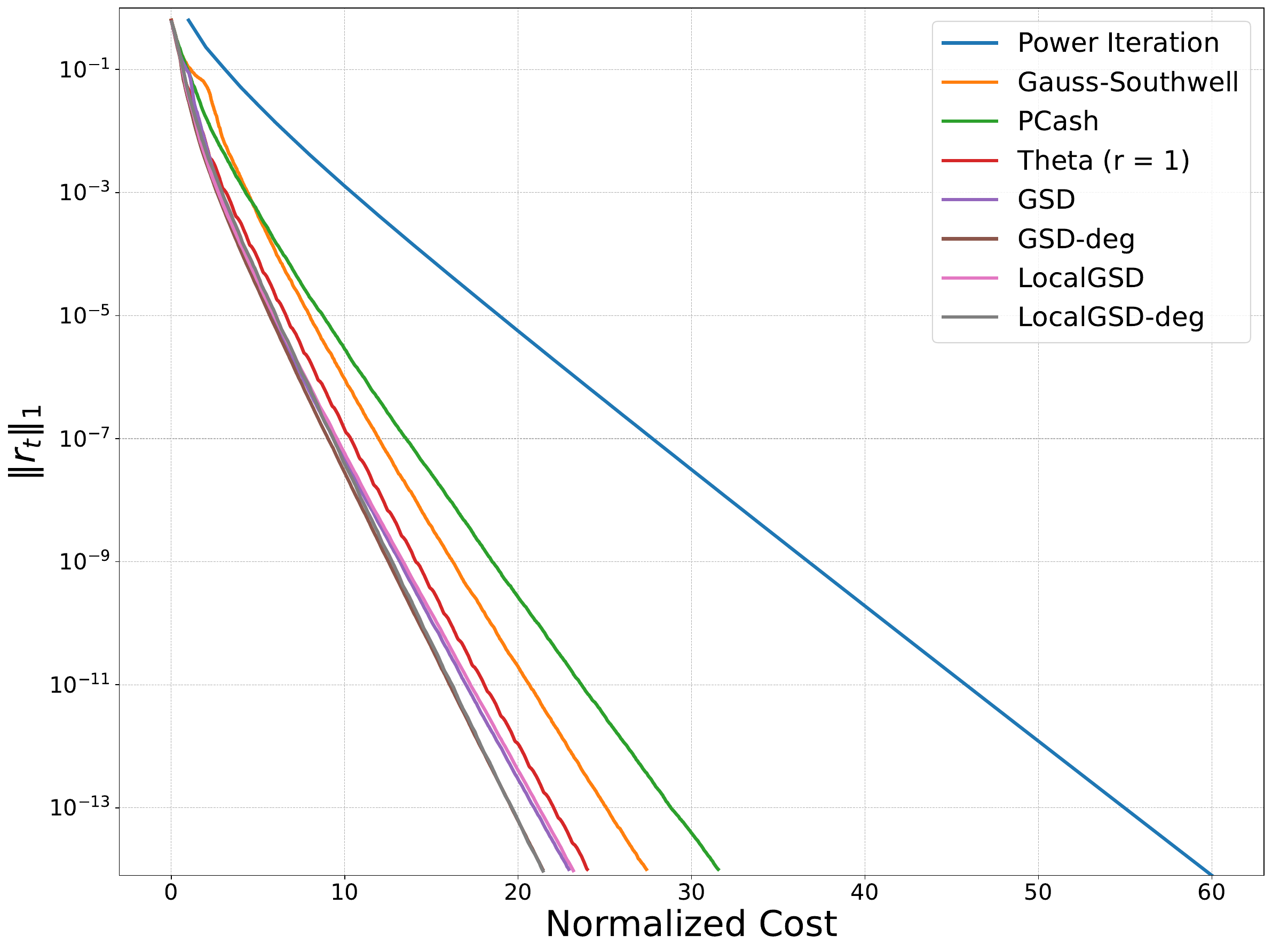}
    \caption{PageRank on computation scale free.}
    \label{fig:scalefree_pagerank}
\end{minipage}
\end{figure}

\begin{figure}[H]
    \centering
    \includegraphics[width=0.5\linewidth]{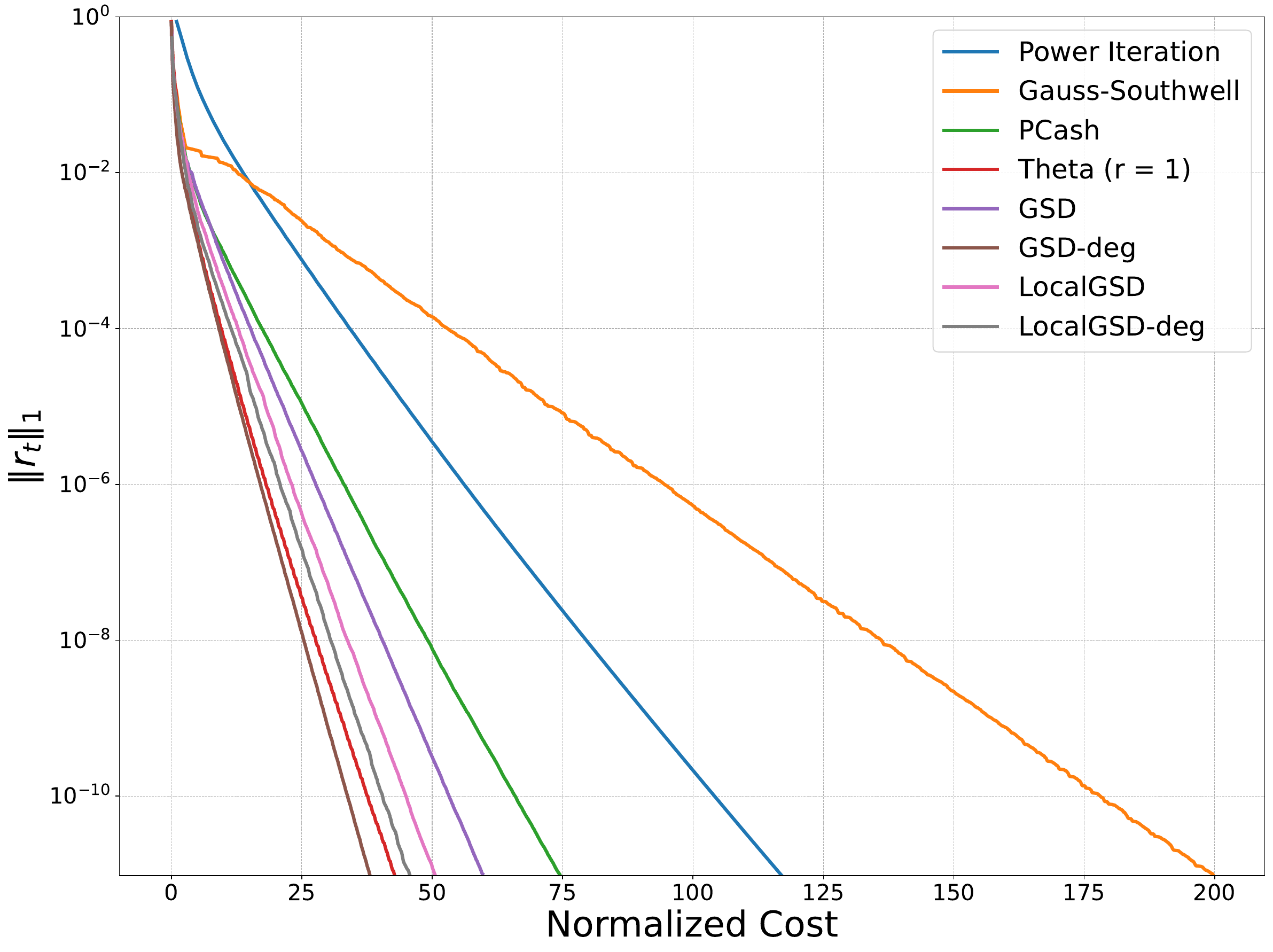}
    \caption{Page rank computation on wb-cs-stanford lscc.}
    \label{fig:wb-cs-stanford_pagerank}
\end{figure}

\begin{table}[h!]
\centering
\footnotesize
\begin{tabular}{lrrc}
\hline
\textbf{Graph} & \textbf{Nodes} & \textbf{Edges} & \textbf{Description} \\
\hline

Harvard500 lscc$^\dagger$          & 335    & 1963     & web graph  \\
SBM $^\ddagger$ & 800    & 27852    & synthetic block model \\
scale free $^\ddagger$               & 1000   & 2991     & synthetic scale-free network \\
wb-cs-stanford lscc$^*$      & 2759   & 13895    & web graph  \\
web-edu$^\dagger$                   & 3031   & 6474     & web graph \\

\hline
\end{tabular}
\caption{Graphs used in the numerical experiments. $^\dagger$: dataset from the Network Repository \cite{network_rep}; $^*$: dataset from SNAP \cite{snap}; $^\ddagger$: synthetic graphs generated by the authors.}
\label{tab:graphs}
\end{table}

\section{Conclusion and Further Research}

In this work, we have found the connection between the RLGL algorithm and coordinate descent method. Specifically, our Theorem~\ref{thm:exp_conv} says that when $P$ is similar to a symmetric matrix, then the RLGL update is a gradient update when minimizing a specific energy function. When $P$ is reversible, we have shown that this energy function is the Dirichlet energy of the Markov chain. Moreover, if updated coordinates form an independent set with respect to $P$, then the RLGL update matches the optimal step size of the coordinate descent. 

In light of computing stationary distributions of large Markov chains, which is the original goal of the RLGL algorithm, reversibility  is a restrictive assumption because the stationary distribution of a reversible chain can be easily computed by other methods. To address this, in Theorem~\ref{thm:perturbed_global}, we have extended our results to a class of so-called `nearly reversible' Markov chains,  using a perturbation approach. 

Even though the nearly reversibility condition is still rather stringent, in practice, the heuristics motivated by the Dirichlet energy minimization, improve the performance of RLGL method. In our numerical experiments on both synthetic and real-world networks, these new heuristics consistently outperform the baselines. In particular, \texttt{GSD-deg} always surpasses the previous state of the art. Moreover, the local variant, \texttt{LocalGSD-deg}, almost always outperforms the prior SOTA, despite relying only on local information.

 A natural next step is to look for conditions on $P$ that are weaker than near reversibility, and yet  allow for an energy–minimization interpretation of the RLGL updates, or at least guarantee some form of descent. Another question is whether one can identify structural properties of directed chains for which coordinate-based updates admit provable convergence guarantees.
\printbibliography

\newpage
\section*{Appendix}
\label{sec:appendix}

\begin{defn} A differentiable function $f : \R\to \R$ is said to be $\mu$-\emph{strongly convex} if there exists $\mu > 0$, such that
\begin{equation}
    f(y) \geq f(x) + \nabla f(y)^\top (y-x) + \frac{\mu}{2}\Vert y-x\Vert_2^2,\qquad \forall x,y \in \R.
    \label{eq:convexity}
\end{equation}
\end{defn}

\begin{lemma}
\label{lem:stronly-convex}
    Let $f$ be a differentiable $\mu$-strongly convex function, and let $f^* = \inf_{x\in \R^n}$. Then,
    $$
    \frac{1}{2\mu} \Vert \nabla f(x)\Vert_2^2 \geq f(x) - f^*
    $$
\end{lemma}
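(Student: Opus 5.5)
The plan is to compare $f$ with its tangent quadratic lower bound and minimize both sides over the free point. Strong convexity, read in the standard form with the gradient taken at the base point, gives for all $x,y\in\R^n$
$$
f(y) \ge f(x) + \nabla f(x)^\top (y-x) + \frac{\mu}{2}\Vert y-x\Vert_2^2 .
$$
The definition \eqref{eq:convexity} as typeset writes $\nabla f(y)$; I would note that this is the usual condition with the gradient at $x$, and use that version.

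First, fix $x$ and view the right-hand side as a function $q_x(y)$ of $y$. It is a strictly convex quadratic, so setting its gradient $\nabla f(x)+\mu(y-x)$ to zero gives the minimizer $y=x-\tfrac1\mu\nabla f(x)$. Substituting this point gives
$$
\min_y q_x(y) = f(x) - \frac{1}{2\mu}\Vert\nabla f(x)\Vert_2^2 .
$$

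Second, take the infimum over $y$ on both sides of $f(y)\ge q_x(y)$:
$$
f^*=\inf_y f(y)\ \ge\ \inf_y q_x(y) = f(x)-\frac{1}{2\mu}\Vert\nabla f(x)\Vert_2^2 .
$$
Rearranging yields the claim. The argument does not need $f^*$ to be attained, although strong convexity guarantees it is finite and attained.

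I expect no real obstacle here. The only care points are the gradient-at-$x$ reading of the definition and the fact that the infimum of the lower bound bounds $f^*$ from below.
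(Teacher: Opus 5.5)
Your proposal is correct and follows the paper's argument, whose proof is the one-line instruction to minimize both sides of the strong-convexity inequality; you simply carry that minimization out, using the minimizer $y = x - \frac{1}{\mu}\nabla f(x)$ of the quadratic lower bound. You are also right that the definition should read $\nabla f(x)^\top(y-x)$ rather than $\nabla f(y)^\top(y-x)$, since the typeset version contradicts convexity, so your reading is the intended one.
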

\begin{proof}
    Minimize both sides of the inequality \eqref{eq:convexity}.
\end{proof}

\begin{lemma}
\label{lem:PSD-PL}
Let $Q \in \R^{n\times n}$ be symmetric and positive semi-definite. Then the quadratic form
$$
f(x)= \frac12xQx^\top
$$
satisfies the PL inequality with $\mu = \min\{\lambda > 0 \mid \lambda \text{ eigenvalue of } Q\}$. 
\end{lemma}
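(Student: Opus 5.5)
We diagonalize $Q$ and reduce the PL inequality to a comparison of two weighted sums of squares. By the spectral theorem, write $Q = U\Lambda U^\top$ with $U$ orthogonal and $\Lambda=\diag(\lambda_1,\dots,\lambda_n)$, where all $\lambda_k\ge 0$ since $Q$ is positive semi-definite. In row-vector convention set $z = xU$. Then
$$
f(x)=\tfrac12\sum_{k}\lambda_k z_k^2, \qquad \nabla f(x) = xQ,
$$
and, because $U$ is orthogonal,
$$
\|\nabla f(x)\|_2^2 = xQ^2x^\top=\sum_k \lambda_k^2 z_k^2 .
$$

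The first key step is to identify $f^*$. Since $Q\succeq 0$ we have $f\ge 0$, and $f(0)=0$, so $f^*=0$. The minimum is attained on the whole kernel of $Q$, so no strong convexity is needed.

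The second step is the termwise comparison. Let $\mu$ be the smallest positive eigenvalue. For indices with $\lambda_k=0$, both the energy term $\lambda_k z_k^2$ and the gradient term $\lambda_k^2 z_k^2$ vanish. For indices with $\lambda_k>0$ we have $\lambda_k\ge\mu$, hence
$$
\lambda_k^2 z_k^2 \ge \mu\,\lambda_k z_k^2 .
$$
Summing over all $k$ gives
$$
\|\nabla f(x)\|_2^2 \ge \mu\sum_k\lambda_k z_k^2 = 2\mu\,(f(x)-f^*),
$$
which is exactly Definition~\ref{def:pl_ineq} with parameter $\mu$.

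There is no real obstacle here. The only point requiring care is the treatment of the zero eigenvalues, i.e.\ the flat directions. They must be excluded from the choice of $\mu$, and one must observe that they contribute nothing to either side; this is why $\mu$ is defined as the smallest \emph{positive} eigenvalue. If $Q=0$ the set defining $\mu$ is empty and the statement is vacuous, so we assume $Q\neq0$. Applied to $Q=\L$, this yields that the Dirichlet energy is PL with the Poincaré constant, as used earlier in the paper.
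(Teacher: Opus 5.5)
Your proof is correct and follows essentially the same route as the paper's: diagonalize $Q$, write $xQ^2x^\top$ and $xQx^\top$ as weighted sums of squares in the eigenbasis, and compare them termwise using $\lambda_k^2 \ge \mu\lambda_k$ for the positive eigenvalues. You also state explicitly that $f^*=0$, which the paper uses only implicitly when it reduces the PL inequality to $xQ^2x^\top \ge \mu\, xQx^\top$.
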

 
\begin{proof}
    Since the gradient is $\nabla f(x) = xQ$, we see that 
    $$
    \frac12\Vert \nabla f(x)\Vert^2 = \frac12 xQ^2x^\top,
    $$
    thus the PL-inequality is equivalent to 
    $$
    xQ^2x^\top \geq \mu\, x Q x^\top.
    $$
    We can show this diagonalizing $Q = U^\top \Lambda U$ with $\Lambda = \diag(\lambda_1,\dots,\lambda_n)$, and let $\mu$ be the smallest non-zero eigenvalue. Let $y = xU^\top$. Then
    $$
    xQ^2x^\top = \sum_{i=1}^n \lambda_i^2 y_i^2 \geq \mu \sum_{i=1}^n \lambda_i y_i^2 = \mu xQx^\top.
    $$
\end{proof}

We will also need Lipschitzianity of the gradient. We define the lipschitz constants for each coordinate
$$
\vert \partial_if(x + \lambda e_i) - \partial_i f(x) \vert \leq L_i |\lambda |, \qquad i=1,\dots,n,
$$

and define $\qquad L_{max} := \max_i L_i$.

Lipschitzianity of the gradient allows us to bound the growth of a function, the next result is known as the \emph{descent lemma}.

\begin{lemma}
    Let $f: \R^n\to \R$ be differentiable and with gradient $L_1,L_2,\dots,L_n$ lipschitz constant. Then
    $$
    f(x+\lambda e_i) \leq f(x) + \lambda\partial_i f(x) + \frac{L_i}{2}\lambda^2
    $$
\end{lemma}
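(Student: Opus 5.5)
The plan is to reduce the claim to the fundamental theorem of calculus along the segment in direction $e_i$, and then bound the resulting integral with the coordinate-wise Lipschitz condition \eqref{eq:lip}. Fix $x\in\R^n$, $i\in[n]$ and $\lambda\in\R$, and define the one-variable function $g(s):=f(x+s\lambda e_i)$ for $s\in[0,1]$. Since $f$ is differentiable, $g$ is differentiable with $g'(s)=\lambda\,\partial_i f(x+s\lambda e_i)$. The first step is to check that $g'$ is continuous, so that $g(1)-g(0)=\int_0^1 g'(s)\,ds$ holds. This follows directly from \eqref{eq:lip}: it gives $|g'(s)-g'(s')|\le L_i\lambda^2|s-s'|$, so $g'$ is Lipschitz and in particular integrable.

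Next I write
$$
f(x+\lambda e_i)-f(x)-\lambda\partial_i f(x)=\int_0^1 \lambda\bigl(\partial_i f(x+s\lambda e_i)-\partial_i f(x)\bigr)\,ds,
$$
and bound the integrand in absolute value. Applying \eqref{eq:lip} with $h=s\lambda$ gives $|\partial_i f(x+s\lambda e_i)-\partial_i f(x)|\le L_i s|\lambda|$. Hence the integrand is at most $L_i s\lambda^2$ in absolute value. Integrating over $s\in[0,1]$ yields the bound $\tfrac{L_i}{2}\lambda^2$, which after rearranging is exactly the claimed inequality.

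The only delicate point is justifying the integral representation, since only differentiability of $f$ is assumed. The coordinate-wise Lipschitz hypothesis resolves this because it makes $s\mapsto\partial_i f(x+s\lambda e_i)$ Lipschitz, hence absolutely continuous. The edge case $\lambda=0$ is trivial, and negative $\lambda$ requires no separate treatment because the bound is stated in terms of $|\lambda|$.
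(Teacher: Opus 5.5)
Your proof is correct and follows the same route as the paper: apply the fundamental theorem of calculus along the direction $e_i$, bound $\partial_i f(x+te_i)-\partial_i f(x)$ using the coordinate-wise Lipschitz constant, and integrate to obtain $\tfrac{L_i}{2}\lambda^2$. Your parametrization $s\mapsto f(x+s\lambda e_i)$ on $[0,1]$ is in fact cleaner than the paper's version. The paper integrates over $[0,\lambda]$, which glosses over the case $\lambda<0$, and it defines $g$ as $\partial_i f(x+te_i)$ where it means $f(x+te_i)$.
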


\begin{proof}
    Define the function $g: \R \to \R$, $g(t) = \partial_i f(x+te_i)$. Then,
    \begin{align*}
    g(\lambda) - g(0) &= \int_0^\lambda g'(t)dt \\ 
                    &\leq \int_0^\lambda |g'(t)-g'(0)|dt +\int_0^\lambda g'(0)dt \\ 
                    &\leq L_i\int_0^\lambda |t| dt + \lambda g'(0)\\
                    &\leq \frac{L_i}{2} \lambda^2 + \lambda g'(0)\\
    \end{align*}
\end{proof}

\begin{lemma}
Let $A,B \in \mathbb{R}^{n \times n}$ be symmetric positive semidefinite matrices such that $\ker(A) = \ker(B).$ Then there exist constants $0 < c_1 \leq c_2 < \infty$ such that for all $x \in \mathbb{R}^n$,
$$
c_1\, x B x^\top \;\leq\; x A x^\top \;\leq\; c_2\, x B x^\top.
$$
\label{lemma:quad_bound}
\end{lemma}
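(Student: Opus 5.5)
The plan is to reduce to the nonzero eigenspaces, which coincide because the kernels do. Write $K := \ker(A) = \ker(B)$ and $V := K^\perp$. Since $A$ and $B$ are symmetric, $\mathrm{range}(A) = \ker(A)^\perp = V$, and likewise for $B$.

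\textbf{Step 1: reduce to $V$.} Decompose $x = x_K + x_V$ with $x_K \in K$ and $x_V \in V$, where $x$ is viewed as a row vector, as in the paper. Because $x_K A = 0$ and $A$ is symmetric, we get $xAx^\top = x_V A x_V^\top$, and likewise $xBx^\top = x_V B x_V^\top$. So both sides of the desired inequality depend only on $x_V$. If $x_V = 0$, both sides are zero and any constants work.

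\textbf{Step 2: spectral bounds on $V$.} I will use the spectral decomposition in the same way as in the proof of Lemma~\ref{lem:PSD-PL}. Let $\lambda_{\min}^+(M)$ and $\lambda_{\max}(M)$ denote the smallest positive and the largest eigenvalue of $M$. Then for $z \in V$,
$$
\lambda_{\min}^+(A)\,\|z\|_2^2 \le zAz^\top \le \lambda_{\max}(A)\,\|z\|_2^2 ,
$$
and the same holds for $B$. The lower bound uses the fact that $V$ is spanned by the eigenvectors of $A$ with positive eigenvalues, which is exactly where the kernel hypothesis enters. If $V = \{0\}$, then $A = B = 0$ and the claim is trivial, for instance with $c_1 = c_2 = 1$.

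\textbf{Step 3: combine.} Chaining the inequalities of Step 2 gives the claim with
$$
c_1 = \frac{\lambda_{\min}^+(A)}{\lambda_{\max}(B)}, \qquad c_2 = \frac{\lambda_{\max}(A)}{\lambda_{\min}^+(B)} .
$$
Both constants are finite and positive. Moreover $c_1 \le c_2$, since $\lambda_{\min}^+(A) \le \lambda_{\max}(A)$ and $\lambda_{\min}^+(B) \le \lambda_{\max}(B)$.

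\textbf{Main obstacle.} The only delicate point is Step 2's lower bound, which requires that $V$ be exactly the positive-eigenvalue eigenspace of both $A$ and $B$ simultaneously. This is precisely what $\ker A = \ker B$ together with symmetry guarantees, so the step is not really hard.

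An alternative route is compactness. On the unit sphere of $V$, the continuous function $z \mapsto zAz^\top / zBz^\top$ is well defined, since the denominator never vanishes there, and it is positive. So it attains a positive minimum and a finite maximum, and these serve as $c_1$ and $c_2$. This argument avoids computing explicit constants.
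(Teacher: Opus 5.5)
Your proof is correct. It shares the paper's first step and differs in how it finishes. The reduction to $V=K^\perp$ via $x=x_K+x_V$ is exactly what the paper does. The paper then concludes with the compactness argument you list as your ``alternative route'': the generalized Rayleigh quotient $\phi(u)=uAu^\top/uBu^\top$ is continuous and positive on the unit sphere of $V$, so it attains a positive minimum $c_1$ and a finite maximum $c_2$. Your main argument instead sandwiches both quadratic forms between multiples of $\|z\|_2^2$, using the extreme positive eigenvalues of each matrix. This gives the explicit constants $c_1=\lambda^+_{\min}(A)/\lambda_{\max}(B)$ and $c_2=\lambda_{\max}(A)/\lambda^+_{\min}(B)$.

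The trade-off is this. The paper's route yields the sharp constants, namely the extreme generalized eigenvalues of the pair $(A,B)$ restricted to $V$, but gives no formula for them. Your route yields computable but generally non-sharp constants, since yours bracket the paper's: $c_1^{\text{yours}}\le c_1^{\text{paper}}$ and $c_2^{\text{yours}}\ge c_2^{\text{paper}}$. Explicit constants would be the useful ones if one wanted to quantify Proposition~\ref{prop:residual-energy}. You also handle the degenerate case $A=B=0$ explicitly. There the unit sphere of $V=\{0\}$ is empty, so the paper's min/max argument does not literally produce constants.
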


\begin{proof}
Let $K := \ker(A) = \ker(B)$ and $V = K^\perp$. For $x \in \mathbb{R}^n$ write $x = u + k$ with $u \in V$, $k \in K$. Since $A$ and $B$ vanish on $K$, we have
\[
x^\top A x = u^\top A u, \qquad x^\top B x = u^\top B u.
\]
On $V$, both $A$ and $B$ are positive definite, so the generalized Rayleigh quotient
\[
\phi(u) := \frac{u^\top A u}{u^\top B u}, \quad u \in V \setminus \{0\},
\]
is strictly positive. Restricting $\phi$ to the compact unit sphere in $V$, $S := \{ u \in V : \|u\|=1 \}$, the continuous function $\phi$ attains a minimum $c_1 > 0$ and a maximum $c_2 < \infty$. Hence,
$$
c_1 \, u^\top B u \le u^\top A u \le c_2 \, u^\top B u, \quad \forall u \in V,
$$
and therefore the same inequality holds for all $x \in \mathbb{R}^n$. 
\end{proof}

\begin{proof}[Proof of Corollary~\ref{cor:random_greedy_explicit}] 
\label{app:random_greedy_proof}
We seek conditions under which Theorem \ref{thm:rlgl_conv} guarantees convergence. This requires two things:
\begin{enumerate}
\item We must effectively transfer the goodness from the observed residual to the symmetric gradient (Proposition \ref{prop:beta_transfer}).
\item The perturbation must be small enough to satisfy the convergence condition (Theorem \ref{thm:perturbed_global}).
\end{enumerate}

Theorem \ref{thm:rlgl_conv} guarantees convergence if:
$$
\kappa < \mu\sqrt{\beta_{\text{sym}}}.
$$
Using the relation $\kappa \le \mu\sqrt{n}\eta_\infty$, this condition becomes:
\begin{equation}
\label{eq:proof_cond_1}
\sqrt{n}\eta_\infty < \sqrt{\beta_{\text{sym}}}.
\end{equation}

We use Proposition \ref{prop:beta_transfer} to substitute $\sqrt{\beta_{\text{sym}}}$. Let $B = \sqrt{\beta_{{\rm norm}}} = 1/\sqrt{n}$. The proposition states:
$$
\sqrt{\beta_{\text{sym}}} = \frac{B - \gamma^\star_\infty}{1 + \gamma^\star_\infty}, \quad \text{where} \quad \gamma^\star_\infty = \frac{\sqrt{n}\eta_\infty}{1-\sqrt{n}\eta_\infty}.
$$
To simplify notation, let $X = \sqrt{n}\eta_\infty$. Then $\gamma^\star_\infty = \frac{X}{1-X}$.
Substituting this into the expression for $\sqrt{\beta_{\text{sym}}}$:
$$
\sqrt{\beta_{\text{sym}}} = B(1-X) - X.
$$
We now substitute this simplified form back into our convergence condition \eqref{eq:proof_cond_1}:
$$
2X < B(1-X) \implies 2X < B - BX \implies X(2+B) < B.
$$
Thus, we require:
$$
X < \frac{B}{2+B}.
$$
Substituting back $X = \sqrt{n}\eta_\infty$ and $B = 1/\sqrt{n}$ gives the bound:
$$
\eta_\infty < \frac{1}{n(2 + 1/\sqrt{n})} = \frac{1}{2n + \sqrt{n}}.
$$
\end{proof}

\end{document}